\journal{}
\theoremstyle{plain}
  \newtheorem{thm}{Theorem}[subsection]
  \newtheorem{lem}[thm]{Lemma}
  \newtheorem{prop}[thm]{Proposition}
  \newtheorem{cor}[thm]{Corollary}
\theoremstyle{definition}
  \newtheorem{defn}[thm]{Definition}
  \newtheorem{ques}[thm]{Question}
  \newtheorem{exmp}[thm]{Example}
  \newtheorem{rem}[thm]{Remark}
\DeclareMathOperator{\dom}{dom}
\DeclareMathOperator{\cod}{cod}
\DeclareMathOperator{\ob}{ob}
\def\ps@pprintTitle{%
 \let\@oddhead\@empty
  \let\@evenhead\@empty
  \def\@oddfoot{\vbox{\hsize=\textwidth\scriptsize
  \copyright 2015. This manuscript version is made available under the CC-BY-NC-ND 4.0 license \url{http://creativecommons.org/licenses/by-nc-nd/4.0/}. The published version is available at \url{http://dx.doi.org/10.1016/j.jpaa.2015.10.005}.\\
  }}%
  \let\@evenfoot\@oddfoot}
\renewcommand{\phi}{\varphi}
\newcommand{\da}{\downarrow}
\newcommand{\ua}{\uparrow}
\newcommand{\ra}{\rightarrow}
\newcommand{\lda}{\swarrow}
\newcommand{\rda}{\searrow}
\newcommand{\Lra}{\Longrightarrow}
\newcommand{\oto}{{\to\hspace*{-3.1ex}{\circ}\hspace*{1.9ex}}}
\newcommand{\bv}{\bigvee}
\newcommand{\bw}{\bigwedge}
\newcommand{\dv}{\dashv}
\newcommand{\nat}{\natural}
\newcommand{\al}{\alpha}
\newcommand{\be}{\beta}
\newcommand{\de}{\delta}
\newcommand{\De}{\Delta}
\newcommand{\ga}{\gamma}
\newcommand{\lam}{\lambda}
\newcommand{\CA}{\mathcal{A}}
\newcommand{\CB}{\mathcal{B}}
\newcommand{\CC}{\mathcal{C}}
\newcommand{\CD}{\mathcal{D}}
\newcommand{\CF}{\mathcal{F}}
\newcommand{\CI}{\mathcal{I}}
\newcommand{\CJ}{\mathcal{J}}
\newcommand{\CM}{\mathcal{M}}
\newcommand{\CP}{\mathcal{P}}
\newcommand{\CQ}{\mathcal{Q}}
\newcommand{\BB}{{\bf B}}
\newcommand{\BD}{{\bf D}}
\newcommand{\Bf}{{\bf f}}
\newcommand{\Bg}{{\bf g}}
\newcommand{\Bu}{{\bf u}}
\newcommand{\Bv}{{\bf v}}
\newcommand{\bQ}{{\bf Q}}
\newcommand{\FD}{\mathfrak{D}}
\newcommand{\sA}{{\sf A}}
\newcommand{\sB}{{\sf B}}
\newcommand{\sY}{{\sf Y}}
\newcommand{\si}{{\sf i}}
\newcommand{\sj}{{\sf j}}
\newcommand{\sk}{{\sf k}}
\newcommand{\bbA}{\mathbb{A}}
\newcommand{\bbB}{\mathbb{B}}
\newcommand{\bbC}{\mathbb{C}}
\newcommand{\Arr}{{\bf Arr}}
\newcommand{\Bond}{{\bf Bond}}
\newcommand{\Cat}{{\bf Cat}}
\newcommand{\CAT}{{\bf CAT}}
\newcommand{\CCat}{{\bf CCat}}
\newcommand{\Chu}{{\bf Chu}}
\newcommand{\ChuCon}{{\bf ChuCon}}
\newcommand{\ChuSpan}{{\bf ChuSpan}}
\newcommand{\Dist}{{\bf Dist}}
\newcommand{\Info}{{\bf Info}}
\newcommand{\Mat}{{\bf Mat}}
\newcommand{\Rel}{{\bf Rel}}
\newcommand{\Set}{{\bf Set}}
\newcommand{\Sq}{{\bf Sq}}
\newcommand{\Sup}{{\bf Sup}}
\newcommand{\QCat}{\CQ\text{-}\Cat}
\newcommand{\QCCat}{\CQ\text{-}\CCat}
\newcommand{\QDist}{\CQ\text{-}\Dist}
\newcommand{\QChu}{\CQ\text{-}\Chu}
\newcommand{\QInfo}{\CQ\text{-}\Info}
\newcommand{\QMat}{\CQ\text{-}\Mat}
\newcommand{\rb}{{\rm b}}
\newcommand{\co}{{\rm co}}
\newcommand{\op}{{\rm op}}
\newcommand{\dR}{R^{\da}}
\newcommand{\uR}{R_{\ua}}
\newcommand{\dS}{S^{\da}}
\newcommand{\uS}{S_{\ua}}
\newcommand{\dU}{U^{\da}}
\newcommand{\uV}{V_{\ua}}
\newcommand{\dphi}{\phi^{\da}}
\newcommand{\uphi}{\phi_{\ua}}
\newcommand{\dphiApBp}{(\phi_{\bbA',\bbB'})^{\da}}
\newcommand{\dphiApB}{(\phi_{\bbA',\bbB})^{\da}}
\newcommand{\dphiABp}{(\phi_{\bbA,\bbB'})^{\da}}
\newcommand{\uphiApBp}{(\phi_{\bbA',\bbB'})_{\ua}}
\newcommand{\uphiApB}{(\phi_{\bbA',\bbB})_{\ua}}
\newcommand{\uphiABp}{(\phi_{\bbA,\bbB'})_{\ua}}
\newcommand{\dpsi}{\psi^{\da}}
\newcommand{\upsi}{\psi_{\ua}}
\newcommand{\hphi}{\widehat{\phi}}
\newcommand{\tphi}{\widetilde{\phi}}
\newcommand{\tzeta}{\widetilde{\zeta}}
\newcommand{\heta}{\widehat{\eta}}
\newcommand{\PA}{\CP\bbA}
\newcommand{\PB}{\CP\bbB}
\newcommand{\PdA}{\CP^{\dag}\bbA}
\newcommand{\PdB}{\CP^{\dag}\bbB}
\newcommand{\sYd}{\sY^{\dag}}
\newcommand{\DQ}{\BD(\CQ)}
\newcommand{\BQ}{{\BB}(\CQ)}
\newcommand{\ArrQ}{\Arr(\CQ)}
\newcommand{\ChuConQ}{{\ChuCon}(\CQ)}
\newcommand{\SqQ}{\Sq(\CQ)}
\newcommand{\QB}{\CQ_{\CB}}
\newcommand{\Fix}{{\sf Fix}}
\numberwithin{equation}{section}
\begin{document}

\begin{frontmatter}



\title{Chu connections and back diagonals between $\mathcal{Q}$-distributors}


\author[Y]{Lili Shen\corref{cor}}
\ead{shenlili@yorku.ca}

\author[S]{Yuanye Tao}
\ead{tyymath@foxmail.com}

\author[S]{Dexue Zhang}
\ead{dxzhang@scu.edu.cn}

\cortext[cor]{Corresponding author.}
\address[Y]{Department of Mathematics and Statistics, York University, Toronto, Ontario M3J 1P3, Canada}
\address[S]{School of Mathematics, Sichuan University, Chengdu 610064, China}

\begin{abstract}
Chu connections and back diagonals are introduced as morphisms for distributors between categories enriched in a small quantaloid $\mathcal{Q}$. These notions, meaningful for closed bicategories, dualize the constructions of arrow categories and the Freyd completion of categories. It is shown that, for a small quantaloid $\mathcal{Q}$, the category of complete $\mathcal{Q}$-categories and left adjoints is a retract of the dual of the category of $\mathcal{Q}$-distributors and Chu connections, and it is dually equivalent to the category of $\mathcal{Q}$-distributors and back diagonals. As an application of Chu connections, a postulation of the intuitive idea of reduction of formal contexts in the theory of formal concept analysis is presented, and a characterization of reducts of formal contexts is obtained.
\end{abstract}

\begin{keyword}
Chu connection \sep back diagonal \sep quantaloid \sep $\mathcal{Q}$-distributor \sep complete $\mathcal{Q}$-category \sep formal concept analysis \sep reduction of formal contexts


\MSC[2010] 	18D20 \sep 18A40 \sep 06B23 

\end{keyword}

\end{frontmatter}

\tableofcontents


\section{Introduction}

This paper is concerned with the following topics:
\begin{itemize} \setlength{\itemsep}{0pt}
\item constructions for closed bicategories that are dual to that of arrow categories and the Freyd completion;
\item morphisms between $\CQ$-distributors (distributors between categories enriched in a small quantaloid $\CQ$); and
\item a suitable notion of reduction of formal contexts for the theory of formal concept analysis.
\end{itemize}

In the rest of the introduction, we will explain how these seemingly different topics are related to each other.  Roughly speaking, the study of formal contexts is a special case of that of $\CQ$-distributors, which again is a special case of that of closed bicategories.

\subsection{Dualizing the constructions of arrow categories and the Freyd completion}

Given a category $\CC$, one has the \emph{arrow category} $\Arr(\CC)$ \cite{MacLane1998}.  With $\CC$-arrows as objects in $\Arr(\CC)$, a morphism from $f:X_1\to Y_1$ to $g:X_2\to Y_2$ in $\Arr(\CC)$ is a pair $(u:X_1\to X_2,\ v:Y_1\to Y_2)$ of $\CC$-arrows such that the \emph{diagonal} of the square
$$\bfig
\square/>`->`->`>/<700,500>[X_1`X_2`Y_1`Y_2;u`f`g`v]
\morphism(0,500)|r|/-->/<700,-500>[X_1`Y_2;]
\efig$$
makes sense: $gu=vf$. Define a congruence on $\Arr(\CC)$ by claiming that $(u,v)\sim(u',v')$ if the squares
$$\bfig
\square/>`->`->`>/<700,500>[X_1`X_2`Y_1`Y_2;u`f`g`v]
\morphism(0,500)|r|/-->/<700,-500>[X_1`Y_2;]
\square(1500,0)/>`->`->`>/<700,500>[X_1`X_2`Y_1`Y_2;u'`f`g`v']
\morphism(1500,500)|r|/-->/<700,-500>[X_1`Y_2;]
\efig$$
have the same diagonal; that is, $gu=vf=gu'=v'f$. The quotient category $\Arr(\CC)/\!\sim$ is then the Freyd completion of $\CC$ (see Grandis \cite{Grandis2000,Grandis2002}). In a word, the Freyd completion of $\CC$ is the category of \emph{diagonals} in $\CC$.

If $\CC$ is furthermore a closed bicategory, for a square of 1-cells in $\CC$ (not necessarily commutative)
$$\bfig
\square/>`->`->`>/<700,500>[X_1`X_2`Y_1`Y_2;u`f`g`v]
\morphism(700,500)|r|/-->/<-700,-500>[X_2`Y_1;]
\efig$$
one can form the right extension
$$f\lda u:X_2\to Y_1$$
of $f$ along $u$ and the right lifting
$$v\rda g:X_2\to Y_1$$
of $g$ through $v$ \cite{Lack2010}. We say that $(u,v)$ is a \emph{Chu connection} from $f$ to $g$ if the right extension $f\lda u$ is isomorphic to the right lifting $v\rda g$  (i.e., the \emph{back diagonal} of the above square makes sense). 1-cells in $\CC$ and Chu connections between them constitute a category $\ChuCon(\CC)$, called the category of Chu connections in $\CC$.  The term ``Chu connection'' is chosen because: Chu connections in a closed bicategory $\CC$ are a special kind of \emph{Chu spans} in the sense of Koslowski \cite{Koslowski2006} (see Remark \ref{Chu span}); Chu connections between $\CQ$-distributors (will be explained later) extend the notion of Chu transforms, and in particular extend Galois connections between partially ordered sets.

By identifying Chu connections $(u,v),(u',v'):f\to g$ whose corresponding back diagonals
$$\bfig
\square/>`->`->`>/<700,500>[X_1`X_2`Y_1`Y_2;u`f`g`v]
\morphism(700,500)|r|/-->/<-700,-500>[X_2`Y_1;]
\square(1500,0)/>`->`->`>/<700,500>[X_1`X_2`Y_1`Y_2;u'`f`g`v']
\morphism(2200,500)|r|/-->/<-700,-500>[X_2`Y_1;]
\efig$$
are isomorphic, i.e., $f\lda u\cong v\rda g\cong f\lda u'\cong v'\rda g$, one obtains a congruence on $\ChuCon(\CC)$; the resulting quotient category, $\BB(\CC)$, is called the category of \emph{back diagonals} in $\CC$. The construction of back diagonals is clearly dual to that of the Freyd completion: one concerns the diagonals, and the other concerns the back diagonals.

As a first step towards the study of Chu connections, we confine ourselves in this paper to a special kind of closed bicategories: quantaloids. A quantaloid is a locally (partially) ordered and locally complete closed bicategory; or equivalently, a $\Sup$-enriched category with $\Sup$ denoting the symmetric monoidal closed category of complete lattices and join-preserving maps \cite{Rosenthal1996}. Quantaloids may also be thought of as quantales with many objects, in the sense that a unital quantale is a monoid in $\Sup$. For such bicategories, 2-cells are given by (partial) order and isomorphic 1-cells are necessarily identical; so, manipulations of 1-cells in a quantaloid will be much easier than in a general closed bicategory.

\subsection{Morphisms between distributors}

\emph{Distributors} \cite{Benabou2000,Borceux1994a,Borceux1994b} (also known as \emph{profunctors} or \emph{bimodules}) generalize functors in the same way as relations generalize maps. Once we have distributors at hand, it is tempting to ask whether there is a sensible notion of morphisms between them. There are several natural candidates in some special cases. First, adjoint functors can be thought of as morphisms between identity distributors, and they are the prototype of Chu transforms (see below). Second, if $\CC$, $\CD$ are ordinary categories (or, categories enriched over a symmetric monoidal closed category), then distributors from $\CC$ to $\CD$ are functors defined on $\CD^{\op}\times\CC$ \cite{Benabou2000,Borceux1994a} (or, $\CD^{\op}\otimes\CC$ \cite{Borceux1994b});  so, natural transformations can be employed to play the role of morphisms. The limitation of these two approaches is obvious: they make sense only for special kinds of distributors.  The case of morphisms between \emph{any} pair of distributors between categories enriched in a bicategory is much more complicated.    In this paper we present an approach to this problem in a special case, i.e., for distributors  between categories enriched in a  small quantaloid $\CQ$.

Basic notions about quantaloid-enriched categories can be found in \cite{Heymans2010,Heymans2012a,Shen2014,Shen2015,Stubbe2005,Stubbe2006}. Chu transforms have been considered in \cite{Shen2013a} (called infomorphisms there) as morphisms of $\CQ$-distributors for the purpose of studying the functoriality of generalized Dedekind-MacNeille completion. Explicitly, a Chu transform between $\CQ$-distributors $\phi:\bbA\oto\bbB$, $\psi:\bbA'\oto\bbB'$ is a pair of $\CQ$-functors $F:\bbA\to\bbA'$, $G:\bbB'\to\bbB$ with $\psi(F-,-)=\phi(-,G-)$. Chu transforms generalize adjoint functors in the sense that any adjoint pair of $\CQ$-functors $F\dv G:\bbB\to\bbA$ is a Chu transform $(F,G):(\bbA:\bbA\oto\bbA)\to(\bbB:\bbB\oto\bbB)$ between identity $\CQ$-distributors. As Chu transforms between $\CQ$-distributors originate from the theory of Chu spaces developed in \cite{Barr1991,Pratt1995}, it is noteworthy to point out that if one considers a commutative unital quantale $Q$ instead of a general quantaloid $\CQ$, then the category of $Q$-distributors and Chu transforms would exactly be the $*$-autonomous completion $\QCat_{\bot}$ of $\QCat$ with $\bot=Q$ in the sense of Barr \cite{Barr1991}.

Since the category $\QDist$ of $\CQ$-categories (as objects) and $\CQ$-distributors (as arrows) is itself a closed bicategory and, indeed a quantaloid \cite{Rosenthal1996,Stubbe2005}, Chu connections and back diagonals can be constructed in this category. The resulting categories (indeed quantaloids), $\ChuCon(\QDist)$ and $\BB(\QDist)$, have $\CQ$-distributors as objects. So, Chu connections and back diagonals are natural morphisms between $\CQ$-distributors.

Chu connections between $\CQ$-distributors are extensions of Chu transforms: in fact, each Chu transform $(F,G):\phi\to\psi$ between $\CQ$-distributors induces a Chu connection $(F^{\nat},G_{\nat}):\psi\to\phi$, where $F^{\nat}$, $G_{\nat}$ are respectively the cograph and graph of $F$, $G$.

The main results in this paper  (Theorems \ref{M_J_id}, \ref{main} and \ref{main result 3}) are about Chu connections and back diagonals between $\CQ$-distributors. It is shown that, for a small quantaloid $\CQ$, the category $\QCCat$  of complete $\CQ$-categories and left adjoint $\CQ$-functors is a retract of the dual of $\ChuCon(\QDist)$, and $\QCCat$ is dually equivalent to $\BB(\QDist)$. These results also  justify the constructions of Chu connections and back diagonals.

\subsection{Reduction of formal contexts}

If the quantaloid $\CQ$ is the two-element Boolean algebra ${\bf 2}=\{0,1\}$, then a $\CQ$-distributor between discrete $\CQ$-categories degenerates to a relation between sets, hence a \emph{formal context} from the viewpoint of formal concept analysis \cite{Davey2002,Ganter1999}. So, morphisms between formal contexts are a special case of that between $\CQ$-distributors. We point out here  that bonds and Chu correspondences between formal contexts, respectively introduced by Ganter \cite{Ganter2007} and Mori \cite{Mori2008}, are both essentially back diagonals (see Proposition \ref{BQ_Bond_iso} and Subsection \ref{ChuCor}).

In formal concept analysis, every formal context is associated with a complete lattice, called its concept lattice. This process extends the Dedekind-MacNeille completion of partially ordered sets. Since different formal contexts may have  isomorphic concept lattices, reduction of formal contexts is an important problem in formal concept analysis, which aims to reduce the size of input data without changing the structure of the output concept lattice. However, to our knowledge, the intuitive idea of a ``reduct'' of a formal context still lacks a rigorous postulation (see the introductory paragraphs of Section \ref{Reduction}). In this paper, we present a notion of reducts of formal contexts with the help of Chu connections. Indeed, we will develop a general theory in this regard, i.e., a theory of reducts of $\CQ$-distributors for a small quantaloid $\CQ$.

The construction of concept lattices out of a formal context has been extended to $\CQ$-distributors in \cite{Shen2013a}, yielding a complete $\CQ$-category $\CM\phi$ for each $\CQ$-distributor $\phi$.  Given a $\CQ$-distributor $\phi:\bbA\oto\bbB$ and $\CQ$-subcategories $\bbA'\subseteq\bbA$, $\bbB'\subseteq\bbB$, four natural $\CQ$-functors are constructed between $\CM\phi$ and $\CM\phi_{\bbA',\bbB'}$, where $\phi_{\bbA',\bbB'}$ is the restriction of $\phi$ to $\bbA'$ and $\bbB'$, which can be regarded as comparison $\CQ$-functors. A little surprisingly (and fortunately), it is proved that if one of these four $\CQ$-functors is an isomorphism then so are the other three. Based on this fact, the notion of a reduct of a $\CQ$-distributor, in particular, a reduct of a formal context, is postulated. Finally, a characterization of reducts is obtained in terms of reducible $\CQ$-subcategories.


\section{Chu connections and back diagonals in a quantaloid} \label{ChuConQ}

\subsection{Quantaloids}

A \emph{quantaloid} \cite{Rosenthal1996} $\CQ$ is a locally ordered 2-category whose (small) hom-sets are complete lattices such that the composition $\circ$ of arrows preserves joins in each variable. The top and bottom arrows in $\CQ(X,Y)$ are denoted by $\top_{X,Y}$ and $\bot_{X,Y}$, respectively. The corresponding adjoints induced by the compositions
\begin{align}
-\circ f\dv -\lda f:&\ \CQ(X,Z)\to\CQ(Y,Z),\label{circ_dv_lda}\\
g\circ -\dv g\rda -:&\ \CQ(X,Z)\to\CQ(X,Y) \label{circ_dv_rda}
\end{align} satisfy
\begin{equation} \label{composition_implication_adjoint}
g\circ f\leq h\iff g\leq h\lda f\iff f\leq g\rda h
\end{equation}
for all $\CQ$-arrows $f:X\to Y$, $g:Y\to Z$, $h:X\to Z$. These adjoints will be called \emph{left} and \emph{right implications} because of the direction of the arrows in the notations, respectively,  instead of right extensions and right liftings as in the vocabulary of bicategory theory \cite{Lack2010}.

A homomorphism between quantaloids is an ordinary functor between the underlying categories that preserves joins of arrows. A homomorphism of quantaloids is \emph{full} (resp. \emph{faithful}, an \emph{equivalence} of quantaloids, an \emph{isomorphism} of quantaloids) if the underlying functor is full (resp. faithful, an equivalence of underlying categories, an isomorphism of underlying categories).

A pair of $\CQ$-arrows $f:X\to Y$ and $g:Y\to X$ form an \emph{adjunction} $f\dv g$ in $\CQ$ (as a 2-category) if $1_X\leq g\circ f$ and $f\circ g\leq 1_Y$. The following identities are useful for calculations related to adjoint $\CQ$-arrows:

\begin{prop} {\rm\cite{Heymans2010}} \label{adjoint_arrow_calculation}
If $f\dv g$ in a quantaloid $\CQ$, then the following identities hold for all $\CQ$-arrows $h,h'$ such that the operations make sense:
\begin{itemize}
\item[\rm (1)] $h\circ f=h\lda g$, $g\circ h=f\rda h$.
\item[\rm (2)] $(f\circ h)\rda h'=h\rda(g\circ h')$, $(h'\circ f)\lda h=h'\lda(h\circ g)$.
\item[\rm (3)] $(h\rda h')\circ f=h\rda(h'\circ f)$, $g\circ(h'\lda h)=(g\circ h')\lda h$.
\item[\rm (4)] $g\circ(h\rda h')=(h\circ f)\rda h'$, $(h'\lda h)\circ f=h'\lda(g\circ h)$.
\end{itemize}
\end{prop}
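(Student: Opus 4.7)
My plan is to prove (1) directly from the unit and counit of the adjunction $f\dv g$ together with the characterization (\ref{composition_implication_adjoint}), and then to reduce each of (2)--(4) to (1) by invoking three generic ``associativity-type'' identities among $\circ$, $\lda$ and $\rda$ that hold in any quantaloid. The identity (1) is the real content; everything else is quantaloid calculus.

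For (1), the inequality $h\circ f\leq h\lda g$ is equivalent by (\ref{composition_implication_adjoint}) to $h\circ f\circ g\leq h$, which follows from the counit $f\circ g\leq 1_Y$ by monotonicity of $\circ$. Conversely,
$$h\lda g = (h\lda g)\circ 1_X\leq (h\lda g)\circ g\circ f\leq h\circ f,$$
using the unit $1_X\leq g\circ f$ in the first inequality and the counit $(h\lda g)\circ g\leq h$ of (\ref{circ_dv_lda}) in the second. The identity $g\circ h=f\rda h$ is obtained by the symmetric argument via (\ref{circ_dv_rda}).

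A one-line adjunction chase through (\ref{composition_implication_adjoint}) then yields the generic identities
$$(k\circ h)\rda h' = h\rda(k\rda h'),\quad (h'\lda h)\lda k = h'\lda(k\circ h),\quad (h\rda h')\lda k = h\rda(h'\lda k),$$
valid for arbitrary composable $\CQ$-arrows. Each of the six equalities in (2)--(4) is now obtained by applying (1) to rewrite $-\circ f$ as $-\lda g$ (or $g\circ-$ as $f\rda-$) and then applying one of these generic identities. For example, the first equality of (2) reads $(f\circ h)\rda h' = h\rda(f\rda h') = h\rda(g\circ h')$, combining the first generic identity with (1); the first equality of (3) reads $(h\rda h')\circ f = (h\rda h')\lda g = h\rda(h'\lda g) = h\rda(h'\circ f)$, using (1), the third generic identity, and (1) again; the first equality of (4) reads $g\circ(h\rda h') = f\rda(h\rda h') = (h\circ f)\rda h'$, using (1) and the first generic identity read backwards; the other three equalities are handled analogously.

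The only real obstacle is bookkeeping: one must verify at each step that the domains and codomains line up so that every composite and implication is well-typed. No insight beyond (1) and the standard quantaloid calculus of implications is required.
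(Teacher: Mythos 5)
Your proposal is correct. Note that the paper itself gives no proof of this proposition: it is quoted with a citation to Heymans's thesis, so there is no in-paper argument to compare against. Your derivation of (1) from the unit/counit of $f\dv g$ via (\ref{composition_implication_adjoint}), and the reduction of (2)--(4) to (1) together with the three generic associativity identities for $\circ$, $\lda$, $\rda$ (each a one-line Galois-connection chase), is the standard argument and checks out in every case, including the type-checking of domains and codomains (e.g.\ in (1) both $h\circ f$ and $h\lda g$ are arrows $X\to Z$ when $f:X\to Y$, $g:Y\to X$, $h:Y\to Z$).
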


A \emph{subquantaloid} of $\CQ$ is exactly a subcategory of $\CQ$ that is closed under the inherited joins of $\CQ$-arrows. A subquantaloid of $\CQ$ is \emph{reflective} (resp. \emph{coreflective}) if it is a reflective (resp. coreflective) subcategory of the underlying category of $\CQ$ such that the corresponding left (resp. right) adjoint of the inclusion functor is a quantaloid homomorphism.

A \emph{congruence} $\vartheta$ on a quantaloid $\CQ$ is a congruence on the underlying category that is compatible with joins of $\CQ$-arrows. In elementary words, a congruence $\vartheta$ consists of a family of equivalence relations $\vartheta_{X,Y}$ on each $\CQ(X,Y)$ $(X,Y\in\ob\CQ)$ such that
\begin{itemize}
\item $(f,f')\in\vartheta_{X,Y}$ and $(g,g')\in\vartheta_{Y,Z}$ implies $(g\circ f,g'\circ f')\in\vartheta_{X,Z}$,
\item $(f_i,f'_i)\in\vartheta_{X,Y}$ $(i\in I)$ implies $\Big(\displaystyle\bv_{i\in I}f_i,\displaystyle\bv_{i\in I}f'_i\Big)\in\vartheta_{X,Y}$.
\end{itemize}

Each congruence $\vartheta$ on $\CQ$ induces a \emph{quotient quantaloid} $\CQ/\vartheta$ equipped with the same objects as $\CQ$. Compositions and joins of arrows in $\CQ/\vartheta$ are clearly well defined, and the obvious quotient functor $\CQ\to\CQ/\vartheta$ is a full quantaloid homomorphism.

Quotient quantaloids may also be defined through \emph{quantaloidal nuclei} \cite{Rosenthal1991,Rosenthal1992a}. A \emph{nucleus} on a quantaloid $\CQ$ is a lax functor $\sj:\CQ\to\CQ$ that is an identity on objects and a closure operator on each hom-set. In elementary words, a nucleus $\sj$ consists of a family of order-preserving maps on each $\CQ(X,Y)$ $(X,Y\in\ob\CQ)$ such that $f\leq\sj f$, $\sj\sj f=\sj f$ and $\sj g\circ\sj f\leq\sj(g\circ f)$ for all $f\in\CQ(X,Y)$, $g\in\CQ(Y,Z)$.

Each nucleus $\sj:\CQ\to\CQ$ induces a \emph{quotient quantaloid} $\CQ_{\sj}$ equipped with the same objects as $\CQ$; arrows in $\CQ_{\sj}$ are the fixed points of $\sj$, i.e., $f\in\CQ_\sj(X,Y)$ if $\sj f=f$ for $f\in\CQ(X,Y)$. The identity arrow in $\CQ_\sj(X,X)$ is $\sj(1_X)$; local joins $\bigsqcup$ and compositions $\circ_{\sj}$ in $\CQ_\sj$ are respectively given by
$$\bigsqcup_{i\in I} f_i=\sj\Big(\bv_{i\in I} f_i\Big),\quad g\circ_\sj f=\sj(g\circ f)$$
for $\{f_i\}_{i\in I}\subseteq\CQ_\sj(X,Y)$, $f\in\CQ_\sj(X,Y)$, $g\in\CQ_\sj(Y,Z)$. In addition, $\sj:\CQ\to\CQ_\sj$ is a full quantaloid homomorphism.

It is not difficult to see that there is no essential difference between the two approaches to quotient quantaloids:

\begin{prop} \label{congruence_nucleus}
Each congruence $\vartheta$ on $\CQ$ induces a nucleus $\sj_{\vartheta}:\CQ\to\CQ$ given by
$$\sj_{\vartheta}f=\bv\{f'\mid (f,f')\in\vartheta_{X,Y}\};$$
that is, $\sj_{\vartheta}f$ is the largest $\CQ$-arrow in the equivalence class of $f$. Conversely, each nucleus $\sj:\CQ\to\CQ$ induces a congruence $\vartheta_{\sj}$ on $\CQ$ with
$$(f,f')\in(\vartheta_{\sj})_{X,Y}\iff\sj f=\sj f'.$$
The two correspondences are mutually inverse, and one has isomorphisms of quotient quantaloids
$$\CQ/\vartheta\cong\CQ_{\sj_{\vartheta}}\quad\text{and}\quad\CQ_{\sj}\cong\CQ/\vartheta_{\sj}.$$
\end{prop}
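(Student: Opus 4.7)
The plan is to check the four claims in sequence, verifying first that each construction lands where it should, then that they compose to the identity, and finally matching the two recipes for the quotient quantaloid.

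For the first claim, I would argue as follows. Because $\vartheta_{X,Y}$ is join-compatible, every $\vartheta_{X,Y}$-equivalence class is closed under arbitrary joins, so $\sj_\vartheta f$ really does sit in the class of $f$ and is its maximum. From this, $f\leq\sj_\vartheta f$ and $\sj_\vartheta\sj_\vartheta f=\sj_\vartheta f$ are immediate; order-preservation follows because if $f\leq g$ then $\sj_\vartheta f\vee\sj_\vartheta g$ belongs to the class of $g$ (as a join of elements in the classes of $f\leq g$ gives an element in the class of $g$), hence is bounded above by $\sj_\vartheta g$. For lax compositionality, $\sj_\vartheta g\circ\sj_\vartheta f$ lies in the class of $g\circ f$ by the composition-compatibility of $\vartheta$, so it is bounded by $\sj_\vartheta(g\circ f)$.

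For the second claim, the equivalence relation $(\vartheta_{\sj})_{X,Y}$ is just the kernel of $\sj$ on $\CQ(X,Y)$. The key inputs are $\sj g\circ\sj f\leq\sj(g\circ f)$ together with $f\leq\sj f$ and idempotence. If $\sj f=\sj f'$ and $\sj g=\sj g'$, then $g\circ f\leq \sj g\circ\sj f\leq\sj(g\circ f)$, so applying $\sj$ gives $\sj(g\circ f)=\sj(\sj g\circ\sj f)$; the right-hand side depends only on $\sj f$ and $\sj g$, so $\sj(g\circ f)=\sj(g'\circ f')$. The same sandwich trick handles joins: $\bv_i f_i\leq\bv_i\sj f_i\leq\sj(\bv_i f_i)$ gives $\sj(\bv_i f_i)=\sj(\bv_i\sj f_i)$, and the right-hand side depends only on the values $\sj f_i$.

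For the third claim, one direction is that $\sj_{\vartheta_\sj}f$ is the largest arrow with the same $\sj$-value as $f$; since any such $f'$ satisfies $f'\leq\sj f'=\sj f$, and $\sj f$ itself qualifies by idempotence, we get $\sj_{\vartheta_\sj}f=\sj f$. The other direction is definitional: $(f,f')\in(\vartheta_{\sj_\vartheta})_{X,Y}$ means $f$ and $f'$ have the same maximum in their $\vartheta$-class, which happens precisely when they are $\vartheta$-equivalent.

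For the final claim, I expect the map $[f]\mapsto\sj_\vartheta f$ (with inverse sending a fixed point to its class) to furnish $\CQ/\vartheta\cong\CQ_{\sj_\vartheta}$, and similarly in the other direction; since the two quotient quantaloids share the same object set with $\CQ$, the whole check reduces to verifying that the bijection on hom-sets respects identities, composition, and joins, which is precisely the definition of composition and join in $\CQ_{\sj}$ read through the congruence. The only mildly delicate point in the whole proposition is the sandwiching argument in Step 2 that turns lax multiplicativity of $\sj$ into genuine equalities of $\sj$-values on composites and joins; everything else is bookkeeping.
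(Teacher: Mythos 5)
Your proof is correct, and all the verifications (closure of equivalence classes under joins, monotonicity and lax multiplicativity of $\sj_\vartheta$, the sandwich argument $g\circ f\leq\sj g\circ\sj f\leq\sj(g\circ f)$ giving well-definedness of $\vartheta_\sj$, and the mutual-inverse and isomorphism checks) go through as you describe. The paper offers no proof of this proposition — it is stated as a routine observation — and your argument is exactly the standard verification one would supply, so there is nothing to compare it against beyond noting that it is the intended one.
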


%
%

\subsection{Chu connections in a quantaloid}

Throughout this paper, $\CQ$ always denotes a quantaloid. Being a closed bicategory, Chu connections in $\CQ$ make sense as follows:

\begin{defn}
For $\CQ$-arrows $f:X_1\to Y_1$, $g:X_2\to Y_2$, a \emph{Chu connection} from $f$ to $g$ is a pair of $\CQ$-arrows $(u:X_1\to X_2,\ v:Y_1\to Y_2)$ such that
$$f\lda u=v\rda g.$$
$$\bfig
\square<1000,500>[X_1`X_2`Y_1`Y_2;u`f`g`v]
\morphism(1000,500)|r|/-->/<-1000,-500>[X_2`Y_1;\ f\lda u=v\rda g]
\efig$$
\end{defn}

Given Chu connections $(u,v):f\to g$, $(u',v'):g\to h$, their composition
$$(u',v')\circ(u,v):=(u'\circ u,v'\circ v):f\to h$$
is also a Chu connection, since
\begin{align*}
f\lda(u'\circ u)&=(f\lda u)\lda u'\\
&=(v\rda g)\lda u'\\
&=v\rda(g\lda u')\\
&=v\rda(v'\rda h)\\
&=(v'\circ v)\rda h;
\end{align*}
and so is the join
$$\bv_{i\in I}(u_i,v_i):=\Big(\bv_{i\in I}u_i,\bv_{i\in I}v_i\Big)$$
of Chu connections $(u_i,v_i):f\to g$ $(i\in I)$. With the identity Chu connection on $f:X\to Y$ given by
$$(1_X,1_Y):f\to f,$$
$\CQ$-arrows and Chu connections constitute a quantaloid $\ChuConQ$ with the componentwise local order inherited from $\CQ$.


\begin{rem} \label{Chu span}
Chu connections are a special kind of \emph{Chu spans} in the sense of Koslowski \cite{Koslowski2006}. A Chu span from $f$ to $g$ is a triple $(u,b,v)$ of $\CQ$-arrows such that $b\circ u\leq f$ and $v\circ b\leq g$.
$$\bfig
\square<700,500>[X_1`X_2`Y_1`Y_2;u`f`g`v]
\morphism(700,500)|m|<-700,-500>[X_2`Y_1;\ b]
\place(200,330)[\twoar(-1,0)] \place(500,170)[\twoar(1,0)]
\efig$$
The composition of Chu spans $(u,b,v):f\to g$, $(u',b',v'):g\to h$ is given by
$$(u',b',v')\circ(u,b,v):=(u'\circ u,\ (b\lda u')\wedge(v\rda b'),\ v'\circ v):f\to h,$$
with $(1_X,\top_{X,Y},1_Y):f\to f$ playing as the identity on each $\CQ$-arrow $f:X\to Y$. The category $\ChuSpan(\CQ)$ of $\CQ$-arrows and Chu spans is in fact a quantaloid with the join of $(u_i,b_i,v_i):f\to g$ $(i\in I)$ given by
$$\bv_{i\in I}(u_i,b_i,v_i)=\Big(\bv_{i\in I}u_i,\bw_{i\in I}b_i,\bv_{i\in I}v_i\Big).$$

It is clear that each Chu connection $(u,v):f\to g$ induces a Chu span
$$(u,f\lda u=v\rda g,v):f\to g,$$
exhibiting $\ChuConQ$ as a subquantaloid of $\ChuSpan(\CQ)$.
\end{rem}

\begin{exmp}[Chu connections in the quantaloid $\Rel$ of sets and relations] \label{ChuConRel}
Given a relation $R\subseteq X\times Y$, write
$$\uR\dv\dR:({\bf 2}^Y)^{\op}\to{\bf 2}^X$$
for the (contravariant) Galois connection given by
$$\uR(A)=\{y\in Y\mid\forall x\in A:\ (x,y)\in R\},\quad\dR(B)=\{x\in X\mid\forall y\in B:\ (x,y)\in R\}.$$
Then a Chu connection from $R\subseteq X\times Y$ to $S\subseteq X'\times Y'$ consists of a pair of relations $U\subseteq X\times X'$ and $V\subseteq Y\times Y'$ such that
$$y\in\uR\dU\{x'\}\iff x'\in\dS\uV\{y\}$$
$$\bfig
\Diamond/<-`->`<-`->/<600,250>[{\bf 2}^X`({\bf 2}^{X'})^{\op}`({\bf 2}^Y)^{\op}`{\bf 2}^{Y'};\dU`\uR`\dS`\uV]
\efig$$
for all $x'\in X'$, $y\in Y$.
\end{exmp}

\begin{exmp}[Chu connections in a free quantaloid] \label{ChuConQB}
Each locally small category $\CB$ naturally induces a \emph{free quantaloid} \cite{Rosenthal1991} $\QB$ with
\begin{itemize}
\item $\ob\QB=\ob\CB$,
\item $\QB(X,Y)=\{\Bf\mid\Bf\subseteq\CB(X,Y)\}$ for all $X,Y\in\ob\CB$,
\item $\Bg\circ\Bf=\{g\circ f\mid f\in\Bf,g\in\Bg\}$ for all $\QB$-arrows $\Bf:X\to Y$, $\Bg:Y\to Z$,
\item ${\bf 1}_X=\{1_X\}$ for all $X\in\ob\CB$.
\end{itemize}
For $\QB$-arrows $\Bf:X_1\to Y_1$, $\Bg:X_2\to Y_2$, a Chu connection $(\Bu,\Bv):\Bf\to\Bg$ is a pair of $\QB$-arrows $\Bu:X_1\to Y_1$, $\Bv:X_2\to Y_2$ such that for any $\CQ$-arrow $h:X_2\to Y_1$,
$$h\in\Bf\lda\Bu \iff h\in\Bv\rda\Bg,$$
or equivalently,
$$ \forall u\in\Bu:\ h\circ u\in\Bf\iff  \forall v\in\Bv:\ v\circ h\in\Bg.$$
\end{exmp}


Each quantaloid $\CQ$ is embedded in $\ChuConQ$ via the faithful (but not full) quantaloid homomorphism
\begin{equation} \label{Q_embed_ChuConQ}
\CQ\to\ChuConQ:\ (f:X\to Y)\mapsto((f,f):\top_{X,X}\to\top_{Y,Y}).
\end{equation}
$$\bfig
\square<800,500>[X`Y`X`Y;f`\top_{X,X}`\top_{Y,Y}`f]
\morphism(800,500)|r|/-->/<-800,-500>[Y`X;\top_{Y,X}]
\efig$$
The embedding is both \emph{weak reflective} and \emph{weak coreflective} in the following sense:

\begin{prop} \label{Q_weak_reflective}
The embedding (\ref{Q_embed_ChuConQ}) is a weak left adjoint of the quantaloid homomorphism
$$\dom:\ChuConQ\to\CQ$$
that sends each Chu connection $(u,v):(f:X_1\to Y_1)\to(g:X_2\to Y_2)$ to $u:X_1\to X_2$, and a weak right adjoint of
$$\cod:\ChuConQ\to\CQ$$
that sends $(u,v)$ to $v:Y_1\to Y_2$.
\end{prop}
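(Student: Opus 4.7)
The key observation is that $\dom \circ E = \id_\CQ = \cod \circ E$ for the embedding $E: \CQ \to \ChuConQ$ of \eqref{Q_embed_ChuConQ}, since $\dom(\top_{X,X}) = X = \cod(\top_{X,X})$ on objects and $\dom((f,f)) = f = \cod((f,f))$ on morphisms. Hence the unit of the conjectured adjunction $E \dv \dom$ and the counit of $\cod \dv E$ may both be taken to be the identity transformation, so what remains is to construct the counit $\epsilon: E \circ \dom \to \id_{\ChuConQ}$ and the unit $\eta: \id_{\ChuConQ} \to E \circ \cod$, verify naturality, and check the triangle identities in the weak sense (i.e., holding as inequalities in the quantaloid $\ChuConQ$).

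At each object $f: X \to Y$ of $\ChuConQ$, the natural candidates are
\[\epsilon_f := (1_X, \bot_{X,Y}) : \top_{X,X} \to f \quad\text{and}\quad \eta_f := (\bot_{X,Y}, 1_Y) : f \to \top_{Y,Y}.\]
That $\epsilon_f$ is a Chu connection amounts to $\top_{X,X} \lda 1_X = \bot_{X,Y} \rda f$; both sides equal $\top_{X,X}$ because every $h: X \to X$ satisfies $h \circ 1_X = h \leq \top_{X,X}$ and $\bot_{X,Y} \circ h = \bot \leq f$. The check for $\eta_f$ is entirely dual, using $f \lda \bot_{X,Y} = \top_{Y,Y} = 1_Y \rda \top_{Y,Y}$.

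Naturality of $\epsilon$ along a Chu connection $(u,v): f_1 \to f_2$ collapses to
\[(1_{X_2}, \bot_{X_2,Y_2}) \circ (u,u) = (u, \bot_{X_1,Y_2}) = (u, v) \circ (1_{X_1}, \bot_{X_1,Y_1}),\]
which holds because $\bot \circ u = \bot = v \circ \bot$; naturality of $\eta$ is symmetric. For the triangle identities of $E \dv \dom$, one direction is strict, $\dom(\epsilon_{EX}) \circ 1_X = 1_X = \id_{\dom EX}$, whereas the other is only lax, $\epsilon_{EX} \circ E(1_X) = (1_X, \bot_{X,X}) \leq (1_X, 1_X) = \id_{EX}$. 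This lax triangle is precisely what the qualifier \emph{weak} refers to; equivalently, for each pair $(Z, f)$ the maps $\dom: \ChuConQ(E Z, f) \to \CQ(Z, \dom f)$ and $h \mapsto \epsilon_f \circ E(h) = (h, \bot)$ form a Galois connection between hom-posets whose composite in one direction is the identity. The argument for $\cod \dv E$ is dual.

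The main subtlety is spotting that the right choice for the missing coordinate of $\epsilon_f$ and $\eta_f$ is the bottom arrow $\bot$; once this is recognized, every subsequent verification reduces to routine quantaloid identities ($\top \lda u = \top$, $\bot \rda f = \top$, $f \lda \bot = \top$, $1_Y \rda \top_{Y,Y} = \top_{Y,Y}$) combined with the annihilation property $\bot \circ u = \bot = v \circ \bot$ of the bottom arrow.
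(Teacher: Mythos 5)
Your construction is correct, and its essential content coincides with the paper's: the whole proposition rests on the observation that $(h,\bot)$ is always a Chu connection out of $\top_{Z,Z}$ (because $\top\lda h=\top=\bot\rda f$), and that $\dom(h,\bot)=h$. One caveat on framing: the paper's notion of \emph{weak} adjoint is Kainen's --- a unit $\al:1\to\dom\circ E$ such that every $g:X\to\dom f$ admits \emph{some} (not necessarily unique) filler $EX\to f$ with $\dom(-)\circ\al_X=g$ --- not a lax adjunction with inequality triangle identities. So your counit $\epsilon$, its naturality, and the lax triangle law are strictly speaking beside the point; what actually proves the statement is your closing remark that $h\mapsto(h,\bot)$ splits $\dom$ on hom-posets, which (with $\al=\id$) is exactly Kainen's condition. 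The extra structure you exhibit is nonetheless correct and somewhat stronger than required. The other difference is in handling $\cod$: you argue directly by the dual computation $f\lda\bot=\top=g\rda\top_{Y,Y}$, whereas the paper routes through the isomorphism $(-)^{\op}:\ChuConQ\to\ChuCon(\CQ^{\op})^{\op}$ and the identity $\dom\circ(-)^{\op}=\cod^{\op}$; your version is more elementary, the paper's makes the self-duality of the construction explicit and reusable.
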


\begin{proof}
Recall that for a pair of functors $F:\CC\to\CD$, $G:\CD\to\CC$, $F$ is a \emph{weak left adjoint} of $G$ \cite{Kainen1971} if there is a natural transformation $\al:1_{\CC}\to GF$ (as the unit) such that for all $X\in\ob\CC$, $Y\in\ob\CD$ and $\CC$-morphism $g:X\to GY$, there exists a $\CD$-morphism $f:FX\to Y$ (but not necessarily unique) that makes the triangle
\begin{equation} \label{weak_adjoint}
\bfig
\qtriangle<800,500>[X`GFX`GY;\al_X`g`Gf]
\efig
\end{equation}
commute. Dually, $G$ is a \emph{weak right adjoint} of $F$ if $G^{\op}$ is a weak left adjoint of $F^{\op}$.

Now, in order to see that the embedding (\ref{Q_embed_ChuConQ}) is a weak left adjoint of $\dom:\ChuConQ\to\CQ$, just note that $\{1_X\}_{X\in\ob\CQ}$ is the required unit: for all $X\in\ob\CQ$, $(h:Y\to Z)\in\ob\ChuConQ$ and $\CQ$-arrow $g:X\to Y$,
$$(g,\bot_{X,Z}):\top_{X,X}\to h$$
is a Chu connection that makes the triangle (\ref{weak_adjoint}) commute.

For the next claim about $\cod$, first note that the assignment $(u,v)\mapsto(u,v)^{\op}:=(v^{\op},u^{\op})$
$$\bfig
\square<600,500>[X_1`X_2`Y_1`Y_2;u`f`g`v]
\place(1050,250)[\mapsto]
\square(1500,0)<700,500>[Y_2`Y_1`X_2`X_1;v^{\op}`g^{\op}`f^{\op}`u^{\op}]
\efig$$
gives an isomorphism
$$(-)^{\op}:\ChuConQ\to\ChuCon(\CQ^{\op})^{\op}.$$
Then, with the commutative triangle
\begin{equation} \label{cod_op_dom}
\bfig
\qtriangle<1200,500>[\ChuConQ^{\op}`\ChuCon(\CQ^{\op})`\CQ^{\op};(-)^{\op}`\cod^{\op}`\dom]
\efig
\end{equation}
one obtains that, the dual of the embedding (\ref{Q_embed_ChuConQ}) is weak left adjoint to $\cod^{\op}:\ChuConQ^{\op}\to\CQ^{\op}$, and the conclusion thus follows.
\end{proof}

\begin{prop}
$\dom:\ChuConQ\to\CQ$ preserves existing limits, and $\cod:\ChuConQ\to\CQ$ preserves existing colimits.
\end{prop}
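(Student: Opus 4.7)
The plan is to prove preservation of limits by $\dom$ directly, using a ``lift by bottom'' trick modelled on the proof of Proposition~\ref{Q_weak_reflective}, and then to deduce the statement for $\cod$ by duality via the isomorphism $(-)^{\op}:\ChuConQ\to\ChuCon(\CQ^{\op})^{\op}$ together with the commutative triangle~(\ref{cod_op_dom}).

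Suppose a diagram $D:I\to\ChuConQ$, written $D(i)=(f_i:X_i\to Y_i)$ and $D(i\to j)=(\al_{ij},\be_{ij})$, admits a limit $(\ell:X\to Y,\ \pi_i=(p_i,q_i))$ in $\ChuConQ$. That $(X,p_i)$ is a cone on $\dom\circ D$ is immediate from the componentwise composition of Chu connections. For the universal property, given a cone $(X',g_i:X'\to X_i)$ on $\dom\circ D$ in $\CQ$, I would lift it to a cone on $D$ with apex $\top_{X',X'}$ and legs $(g_i,\bot_{X',Y_i}):\top_{X',X'}\to f_i$. Each leg is a Chu connection because $\top_{X',X'}\lda g_i=\top_{X_i,X'}=\bot_{X',Y_i}\rda f_i$ (both implications are top since anything composes below $\top$ and bottom composes to bottom), and the cone condition in $\ChuConQ$ follows from $\al_{ij}\circ g_i=g_j$ together with $\be_{ij}\circ\bot_{X',Y_i}=\bot_{X',Y_j}$. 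The limit's universal property then produces a unique Chu connection $(g,h):\top_{X',X'}\to\ell$ with $p_i\circ g=g_i$ and $q_i\circ h=\bot_{X',Y_i}$, and the first coordinate $g$ furnishes the desired factoring $\CQ$-arrow.

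For uniqueness of $g$ in $\CQ$, any alternative $g':X'\to X$ satisfying $p_i\circ g'=g_i$ gives rise, by exactly the same top/bottom computation, to a Chu connection $(g',\bot_{X',Y}):\top_{X',X'}\to\ell$ that likewise factors the lifted cone; uniqueness in $\ChuConQ$ then forces $(g,h)=(g',\bot_{X',Y})$, and in particular $g=g'$. For the statement about $\cod$, the triangle~(\ref{cod_op_dom}) factors $\cod^{\op}:\ChuConQ^{\op}\to\CQ^{\op}$ as $\dom\circ(-)^{\op}$, where $\dom:\ChuCon(\CQ^{\op})\to\CQ^{\op}$ is the domain functor of the opposite quantaloid. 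Applying the case already proved to $\CQ^{\op}$ shows that this $\dom$ preserves limits, hence so does $\cod^{\op}$, equivalently $\cod$ preserves colimits. The main obstacle is locating the right lift of a $\CQ$-cone into $\ChuConQ$; the choice $(g_i,\bot_{X',Y_i})$ both is a Chu connection for free and pins down the second coordinate of the limit factorization, which is precisely what makes uniqueness at the $\CQ$-level go through.
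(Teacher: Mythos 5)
Your proof is correct and takes essentially the same approach as the paper's: both lift a cone over $\dom\circ D$ to a cone over $D$ with apex $\top_{X',X'}$ and legs $(g_i,\bot_{X',Y_i})$, then obtain the factoring arrow as the first component of the induced morphism and deduce the $\cod$ statement by duality through the triangle identifying $\cod^{\op}$ with $\dom$ on $\ChuCon(\CQ^{\op})$. Your uniqueness step for $g$ is spelled out a bit more explicitly than in the paper, but it is the same argument.
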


\begin{proof}
We only need to prove that $\dom$ preserves limits.  The claim about $\cod$ follows from the commutative diagram (\ref{cod_op_dom}).

Let $D:\CJ\to\ChuConQ$ be a diagram with $Di=(f_i:X_i\to Y_i)$. If there is a limiting cone
$$\al=(u_i,v_i)_{i\in\ob\CJ}:\De(f:X\to Y)\to D$$
over $D$,  we claim that $\be=(u_i)_{i\in\ob\CJ}:\De X\to\dom\cdot D$ is a limiting cone over $\dom\cdot D$. Indeed, for any cone $\ga=(u'_i)_{i\in\ob\CJ}:\De Z\to\dom\cdot D$,
$$(u'_i,\bot_{Z,Y_i}):(\top_{Z,Z}:Z\to Z)\to(f_i:X_i\to Y_i)$$
$$\bfig
\square<800,500>[Z`X_i`Z`Y_i;u'_i`\top_{Z,Z}`f_i`\bot_{Z,Y_i}]
\morphism(800,500)|r|/-->/<-800,-500>[X_i`Z;\top_{X_i,Z}]
\efig$$
is a Chu connection, and gives rise to a cone $\de=(u'_i,\bot_{Z,Y_i})_{i\in\ob\CJ}:\De\top_{Z,Z}\to D$. Since $\al$ is a limiting cone, there exists a unique Chu connection
$$(u,\bot_{Z,Y}):(\top_{Z,Z}:Z\to Z)\to(f:X\to Y)$$
with $u_i\circ u=u'_i$ and $v_i\circ\bot_{Z,Y}=\bot_{Z,Y_i}$ for all $i\in\ob\CJ$. Thus $u:Z\to X$ is the unique $\CQ$-arrow satisfying $u_i\circ u=u'_i$ for all $i\in\ob\CJ$.
\end{proof}

\subsection{Back diagonals in a quantaloid} \label{BQ}

For Chu connections $(u,v),(u',v'):f\to g$ in a quantaloid $\CQ$, we write $(u,v)\sim(u',v')$ if
$$f\lda u=f\lda u',\quad\text{or equivalently},\quad v\rda g=v'\rda g;$$
that is, if the two squares
$$\bfig
\square[X_1`X_2`Y_1`Y_2;u`f`g`v]
\square(1000,0)/>`->`->`>/[X_1`X_2`Y_1`Y_2;u'`f`g`v']
\morphism(500,500)/-->/<-500,-500>[X_2`Y_1;]
\morphism(1500,500)/-->/<-500,-500>[X_2`Y_1;]
\efig$$
have the same \emph{back diagonal}. ``$\sim$'' is clearly an equivalence relation on $\ChuConQ(f,g)$, and it gives rise to a congruence on $\ChuConQ$:

\begin{prop}
The equivalence relation ``$\sim$'' is compatible with compositions and joins of Chu connections.
\end{prop}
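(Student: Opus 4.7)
My plan is to prove both compatibility statements by a direct computation using the standard adjunction identity
\[(h\lda f)\lda g = h\lda(g\circ f),\]
which follows from the two-variable adjunction $-\circ f\dv -\lda f$, together with its ``mixed'' companion
\[(v\rda g)\lda u = v\rda(g\lda u),\]
obtained by the symmetric use of $v\circ-\dv v\rda -$ and $-\circ u\dv -\lda u$. Both are easy one-line ping-pong arguments with (\ref{composition_implication_adjoint}) and will be invoked without further comment.

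For composition: suppose $(u_1,v_1)\sim(u_1',v_1'):f\to g$ and $(u_2,v_2)\sim(u_2',v_2'):g\to h$, so that $f\lda u_1=f\lda u_1'$, $v_1\rda g=v_1'\rda g$, $g\lda u_2=g\lda u_2'$, and $v_2\rda h=v_2'\rda h$. Then I compute
\[
f\lda(u_2\circ u_1)=(f\lda u_1)\lda u_2=(v_1\rda g)\lda u_2=v_1\rda(g\lda u_2),
\]
where the second equality uses that $(u_1,v_1)$ is a Chu connection. Now replace $g\lda u_2$ by $g\lda u_2'$ using the second equivalence, then move the $\lda u_2'$ outside again to obtain $(v_1\rda g)\lda u_2'$, then use $v_1\rda g=v_1'\rda g$ to rewrite this as $(v_1'\rda g)\lda u_2'$, and finally run the first chain of identities backwards using that $(u_1',v_1')$ is a Chu connection, arriving at $f\lda(u_2'\circ u_1')$. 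Hence the composite Chu connections are $\sim$-equivalent.

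For joins: if $(u_i,v_i)\sim(u_i',v_i')$ for all $i\in I$, I simply use that $f\lda(-)$ sends joins to meets (a standard consequence of the adjunction $-\circ u\dv f\lda u$), so
\[
f\lda\bv_{i\in I}u_i=\bw_{i\in I}(f\lda u_i)=\bw_{i\in I}(f\lda u_i')=f\lda\bv_{i\in I}u_i',
\]
which is exactly $\bv_i(u_i,v_i)\sim\bv_i(u_i',v_i')$.

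I do not anticipate any obstacle; the only point to watch is that both of the hypotheses in the composition case really are used, which is why the argument proceeds by alternately substituting inside the outer $\rda$ and re-associating the iterated implication. Everything else is purely formal manipulation in the quantaloid $\CQ$.
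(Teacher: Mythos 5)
Your proof is correct and follows essentially the same route as the paper: a formal ping-pong with the identities $(h\lda f)\lda g=h\lda(g\circ f)$ and $(v\rda g)\lda u=v\rda(g\lda u)$ for composition, and the fact that $f\lda(-)$ turns joins into meets for the join case. The only cosmetic difference is that you land on $f\lda(u_2'\circ u_1')$ directly, whereas the paper ends its chain at $(v_2'\circ v_1')\rda h$; both yield the same equivalence.
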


\begin{proof}
Suppose $(u_1,v_1)\sim(u'_1,v'_1):f\to g$, $(u_2,v_2)\sim(u'_2,v'_2):g\to h$, then
\begin{align*}
f\lda(u_2\circ u_1)&=(f\lda u_1)\lda u_2\\
&=(f\lda u'_1)\lda u_2&((u_1,v_1)\sim(u'_1,v'_1))\\
&=(v'_1\rda g)\lda u_2&((u'_1,v'_1)\ \text{is a Chu connection})\\
&=v'_1\rda(g\lda u_2)\\
&=v'_1\rda(g\lda u'_2)&((u_2,v_2)\sim(u'_2,v'_2))\\
&=v'_1\rda(v'_2\rda h)&((u'_2,v'_2)\ \text{is a Chu connection})\\
&=(v'_2\circ v'_1)\rda h.
\end{align*}
Thus $(u_2,v_2)\circ(u_1,v_1)\sim(u'_2,v'_2)\circ(u'_1,v'_1)$. In addition, if $(u_i,v_i)\sim(u'_i,v'_i):f\to g$ $(i\in I)$, then
$$f\lda\bv_{i\in I}u_i=\bw_{i\in I}(f\lda u_i)=\bw_{i\in I}(f\lda u'_i)=f\lda\bv_{i\in I}u'_i,$$
and it follows that $\displaystyle\bv_{i\in I}(u_i,v_i)\sim\displaystyle\bv_{i\in I}(u'_i,v'_i)$.
\end{proof}

We shall denote $\BQ:=\ChuConQ/\!\sim$ for the resulting quotient quantaloid and call it the quantaloid of \emph{back diagonals} in $\CQ$. The nucleus $\si:\ChuConQ\to\ChuConQ$ corresponding to the congruence ``$\sim$'' (see Proposition \ref{congruence_nucleus}) is given by
$$\si(u,v):=((f\lda u)\rda f,\ g\lda(v\rda g))$$
for all Chu connections $(u,v):f\to g$. Thus we write
$$\si:\ChuConQ\to\BQ$$
for the quotient homomorphism.

A Chu connection $(u,v)$ is said to be \emph{closed} if $(u,v)=\si(u,v)$, in which case $(u,v)$ is the largest member in its equivalence class and it can be determined solely by $u$ or $v$:

\begin{prop} \label{siuv}
Let $(u,v):f\to g$ be a Chu connection. Then
$$\si(u,v)=((f\lda u)\rda f,\ g\lda(f\lda u))=((v\rda g)\rda f,\ g\lda(v\rda g)).$$
\end{prop}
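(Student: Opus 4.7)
The plan is to derive both displayed equalities as immediate substitutions into the definition
$$\si(u,v) := ((f\lda u)\rda f,\ g\lda(v\rda g))$$
stated in the paragraph just before the proposition. The only ingredient needed is the defining identity of a Chu connection, namely $f\lda u = v\rda g$.

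First I would rewrite the second component of the defining formula: replacing $v\rda g$ by $f\lda u$ yields $g\lda(f\lda u)$, so that together with the unchanged first component one gets $((f\lda u)\rda f,\ g\lda(f\lda u))$. Then I would do the symmetric rewriting in the first component: replacing $f\lda u$ by $v\rda g$ yields $(v\rda g)\rda f$, which together with the unchanged second component gives $((v\rda g)\rda f,\ g\lda(v\rda g))$. Both rewritings are pure substitutions using $f\lda u = v\rda g$, so no calculation with the implication identities of Proposition \ref{adjoint_arrow_calculation} is required.

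The conceptual point of the proposition—and the reason it is worth stating explicitly—is that it exhibits two equivalent closed forms for $\si(u,v)$: the first depends only on $u$ (together with $f$ and $g$), the second only on $v$. This is exactly what validates the claim made just above the proposition that a closed Chu connection can be recovered from either of its two components alone, and it is what one uses downstream when passing between $\ChuConQ$ and $\BQ$. There is no real obstacle in the argument; matters such as well-definedness of $\si$ as an endomap of $\ChuConQ$ or its status as a nucleus corresponding to $\sim$ (via Proposition \ref{congruence_nucleus}) are settled elsewhere and are not part of what this proposition asserts.
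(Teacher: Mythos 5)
Your proof is correct and matches the paper's (implicit) reasoning: the paper states Proposition \ref{siuv} without proof, treating it as an immediate substitution of the Chu-connection identity $f\lda u=v\rda g$ into the displayed formula $\si(u,v)=((f\lda u)\rda f,\ g\lda(v\rda g))$, which is exactly what you do. Your closing remarks correctly delimit what the proposition asserts versus what is established elsewhere.
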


\begin{exmp}[Continuation of Example \ref{ChuConRel}] \label{BRel}
A Chu connection $(U,V):R\to S$ in the quantaloid $\Rel$  is closed if
$$\dR\uR\dU\{x'\}=\dU\{x'\}\quad\text{and}\quad\uS\dS\uV\{y\}=\uV\{y\}$$
for all $x'\in X'$, $y\in Y$.
\end{exmp}

\begin{exmp}[Continuation of Example \ref{ChuConQB}] \label{BQB}
A Chu connection $(\Bu,\Bv):\Bf\to\Bg$ in the free quantaloid $\QB$ is closed if
$$u\in\Bu\iff\forall h\in\Bf\lda\Bu:\ h\circ u\in\Bf\quad\text{and}\quad v\in\Bv\iff\forall h\in\Bv\rda\Bg:\ v\circ h\in\Bg.$$
\end{exmp}

One may expect that a \emph{back diagonal} from $f:X_1\to Y_1$ to $g:X_2\to Y_2$ should intuitively be a $\CQ$-arrow from $X_2$ to $Y_1$. In fact, there is a description of $\BQ$ reflecting to this intuition, which is particularly useful in Subsection \ref{BQ_quantale}.

A $\CQ$-arrow $b:X_2\to Y_1$ with
$$f\lda(b\rda f)=b=(g\lda b)\rda g$$
$$\bfig
\square<700,500>[X_1`X_2`Y_1`Y_2;b\rda f`f`g`g\lda b]
\morphism(700,500)/-->/<-700,-500>[X_2`Y_1;b]
\efig$$
is called a \emph{bond} from $f$ to $g$ (in generalization of the terminology used for the case $\CQ=\Rel$; see \cite{Ganter2007}). The composition of bonds $b:f\to g$, $b':g\to h$ is given by
\begin{align*}
b'\bullet b&=(g\lda b)\rda b'=((h\lda b')\circ(g\lda b))\rda h\\
&=b\lda(b'\rda g)=f\lda((b'\rda g)\circ(b\rda f)).
\end{align*}
$$\bfig
\square<700,500>[X_2`X_3`Y_2`Y_3;b'\rda g`g`h`h\lda b']
\morphism(700,500)/-->/<-700,-500>[X_3`Y_2;b']
\square(-700,0)<700,500>[X_1`X_2`Y_1`Y_2;b\rda f`f``g\lda b]
\morphism(0,500)/-->/<-700,-500>[X_2`Y_1;b]
\square(1300,0)/->`->``->/<700,500>[X_1`X_2`Y_1`Y_2;b\rda f`f``g\lda b]
\square(2000,0)/->``->`->/<700,500>[X_2`X_3`Y_2`Y_3;b'\rda g``h`h\lda b']
\morphism(2700,500)/-->/<-1400,-500>[X_3`Y_1;b'\bullet b]
\morphism(2700,500)|b|/-->/<-700,-500>[X_3`Y_2;b']
\morphism(2000,500)/-->/<-700,-500>[X_2`Y_1;b]
\place(1000,250)[\mapsto]
\efig$$
$\CQ$-arrows and bonds constitute a quantaloid $\Bond(\CQ)$ with the local order given by the \emph{reversed} order of $\CQ$-arrows. 

\begin{prop} \label{BQ_Bond_iso}
$\BQ$ and $\Bond(\CQ)$ are isomorphic quantaloids.
\end{prop}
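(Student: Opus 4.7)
The plan is to construct an explicit quantaloid isomorphism $\Phi\colon\BQ\to\Bond(\CQ)$ that is the identity on objects and sends the class $[(u,v)]$ of a Chu connection from $f$ to $g$ to its common back diagonal $b:=f\lda u=v\rda g$. The definition of $\sim$ already makes this map well-defined and injective on hom-sets, so the substance of the proof is to show that each such $b$ is a bond, that every bond arises this way, and that $\Phi$ is compatible with composition, identities and joins.

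The bond axioms for $b$ follow from two adjunction moves. From $b=f\lda u$ one obtains $b\circ u\leq f$, hence $u\leq b\rda f$, so by the order-reversing nature of $f\lda(-)$ we get $f\lda(b\rda f)\leq f\lda u=b$; the reverse inequality is just the counit $b\circ(b\rda f)\leq f$. Dually, $(g\lda b)\rda g=b$. Conversely, for any bond $b\colon f\to g$ the pair $(b\rda f,\,g\lda b)$ is a Chu connection with back diagonal $b$, by the very bond axioms; this also recovers the closed representative given in Proposition~\ref{siuv} and establishes surjectivity.

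For functoriality, compute the back diagonal of a composite of Chu connections:
\[
f\lda(u_2\circ u_1)=(f\lda u_1)\lda u_2=b_1\lda u_2,
\]
and taking the closed representative $u_2=b_2\rda g$ recovers $b_1\lda(b_2\rda g)=b_2\bullet b_1$, one of the four equivalent formulas defining bond composition. The identity $(1_X,1_Y)\colon f\to f$ has back diagonal $f\lda 1_X=f$; that $f$ is itself a bond $f\to f$ follows from $1\leq f\rda f$ combined with the counit $f\circ(f\rda f)\leq f$, and its left and right unit laws for $\bullet$ reduce to the bond axioms $f\lda(b\rda f)=b=(g\lda b)\rda g$.

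Finally, $\Phi$ preserves local joins: $f\lda\bigvee_i u_i=\bigwedge_i(f\lda u_i)=\bigwedge_i b_i$, which is precisely the join in $\Bond(\CQ)$ since its local order is the reverse of that in $\CQ$. The main obstacle is purely bookkeeping — keeping track of the direction of each adjunction and the order reversal in $\Bond(\CQ)$ — but the calculations streamline considerably once one systematically works with the closed representative $(b\rda f,\,g\lda b)$ of each $\sim$-class.
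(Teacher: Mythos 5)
Your proposal is correct and follows exactly the route of the paper's (much terser) proof: the paper also uses the mutually inverse assignments $[(u,v)]\mapsto f\lda u=v\rda g$ and $b\mapsto(b\rda f,\,g\lda b)$ to get lattice isomorphisms $\BQ(f,g)\cong\Bond(\CQ)(f,g)$, leaving the verifications you spell out (bond axioms, compatibility with composition, identities and joins) to the reader. Your adjunction computations filling in those details are all sound.
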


\begin{proof}
The assignments $(u,v)\mapsto f\lda u=v\rda g$ and $b\mapsto(b\rda f,g\lda b)$ establish an isomorphism of complete lattices
$$\BQ(f,g)\cong\Bond(\CQ)(f,g),$$
and thus gives rise to the desired isomorphism.
\end{proof}

\subsection{Examples: Back diagonals in an integral quantale} \label{BQ_quantale}

An \emph{integral quantale} $(Q,\&)$ is a one-object quantaloid in which the unit $1$ of the underlying monoid $(Q,\&)$ is the top element of the complete lattice $Q$. In this subsection we describe the quantaloid $\BB(Q)$ of back diagonals in an integral quantale $(Q,\&)$. Of particular interest is the case that the quantale is given by the unit interval $[0,1]$ coupled with a continuous t-norm \cite{Klement2000}. Note that we identify $\BB(Q)$ with $\Bond(Q)$ by Proposition \ref{BQ_Bond_iso}.

For each integral quantale $\CQ=(Q,\&)$,  the quantaloid $\BB(\CQ)$ of back diagonals in $(Q,\&)$ consists of the following data:
\begin{itemize}
\item objects: elements $x,y,z,\dots$ in $Q$;
\item arrows: $\BB(\CQ)(x,y)=\{b\in Q\mid x\lda(b\rda x)=b=(y\lda b)\rda y\}$;
\item composition: for all $b\in\BB(\CQ)(x,y)$, $b'\in\BB(\CQ)(y,z)$,
$$b'\bullet b=(y\lda b)\rda b'=b\lda(b'\rda y);$$
\item the unit in $\BB(\CQ)(x,x)$ is $x$.
\end{itemize}

\begin{prop} \label{BQxy}
For an integral quantale $(Q,\&)$ and $x,y\in Q$, $1\in\BB(\CQ)(x,y)$ and $\BB(\CQ)(x,y)$ is a subset of the upper set generated by $x\vee y$, i.e.,
$\BB(\CQ)(x,y)\subseteq\ua\!(x\vee y).$
\end{prop}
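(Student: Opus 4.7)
The plan is to exploit the two special features of an integral quantale: $1$ is simultaneously the unit of $\&$ and the top element of $Q$. This gives two facts used throughout: for every $a\in Q$ one has $1\rda a=a$ and $a\lda 1=a$, and every element $c\in Q$ satisfies $c\leq 1$, so that $a\& c\leq a$ and $c\& a\leq a$.

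For the first claim, I will verify directly that $1$ satisfies the two defining equations of a bond from $x$ to $y$. Since $1\rda x=x$, one has $x\lda(1\rda x)=x\lda x$, and then $x\lda x=1$ because $1\& x=x\leq x$ together with the fact that $1$ is the top forces $x\lda x$ to be $1$. A symmetric calculation gives $(y\lda 1)\rda y=y\rda y=1$. Hence $1\in\BB(\CQ)(x,y)$.

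For the second claim, given $b\in\BB(\CQ)(x,y)$, the task is to show $b\geq x\vee y$, and I will treat the two inequalities $x\leq b$ and $y\leq b$ separately. For $x\leq b$: starting from $b=x\lda(b\rda x)$ and using (\ref{composition_implication_adjoint}) in the form $g\leq h\lda f\iff g\& f\leq h$, it suffices to show $x\&(b\rda x)\leq x$; but $b\rda x\leq 1$ by integrality, so $x\&(b\rda x)\leq x\& 1=x$. The inequality $y\leq b$ follows symmetrically from $b=(y\lda b)\rda y$.

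I do not anticipate any real obstacle; the proposition is essentially an unpacking of the definition of a bond combined with the integrality hypothesis. The only mild care needed is to apply the adjunction (\ref{composition_implication_adjoint}) in the correct direction to reduce each of $x\leq b$ and $y\leq b$ to the trivial bound $b\rda x\leq 1$ (resp.\ $y\lda b\leq 1$).
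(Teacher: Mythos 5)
Your proposal is correct and follows essentially the same route as the paper: the containment $\BB(\CQ)(x,y)\subseteq\ \ua\!(x\vee y)$ is obtained by exactly the paper's computation, reducing $x\leq x\lda(b\rda x)=b$ via the adjunction (\ref{composition_implication_adjoint}) to $x\&(b\rda x)\leq x\&1=x$, and dually for $y$. Your explicit verification that $1$ is a bond (using $1\rda x=x$, $x\lda x=1$ by integrality) is a detail the paper leaves unstated, and it checks out.
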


\begin{proof}
Just note that for all $b\in\BB(\CQ)(x,y)$, $x\&(b\rda x)\leq x\& 1=x$ implies $x\leq x\lda(b\rda x)=b$. Similarly $y\leq b$.
\end{proof}

In the case that $Q$ is a commutative quantale, we shall write $x\ra y$ for $y\lda x=x\rda y$.

\begin{exmp}
Let $\CQ=(Q,\&)$ be an MV-algebra \cite{Chang1958}. Since for all $x,b\in Q$, $x\leq b$ implies
$$(b\ra x)\ra x= b\vee x=b,$$
one gets $\BB(\CQ)(x,y)=\ua\!(x\vee y)$ for all $x,y\in Q$. For $b\in\BB(\CQ)(x,y)$, $b'\in\BB(\CQ)(y,z)$, the composition is given by
$$b'\bullet b=(b\ra y)\ra b'=(b'\ra y)\ra b.$$
\end{exmp}

\begin{exmp}
Let $(Q,\&)$ be Lawvere's quantale $([0,\infty]^{\op},+)$, in which $x\ra y=\max\{0,y-x\}$ for all $x,y\in Q$ \cite{Lawvere1973}. Then
$$\BB(Q)(x,y)=\begin{cases}
[0,\min\{x,y\}], & \text{if}\ 0\leq x,y<\infty,\\
\{0,\infty\}, & \text{if}\ x=y=\infty,\\
\{0\}, & \text{otherwise}.
\end{cases}$$
For $b\in\BB(Q)(x,y)$, $b'\in\BB(Q)(y,z)$,
$$b'\bullet b=\begin{cases}
\infty, & \text{if}\ b=b'=y=\infty,\\
0, & \text{if}\ \min\{b,b'\}<\infty\ \text{and}\ y=\infty,\\
\max\{0,b+b'-y\}, & \text{otherwise}.
\end{cases}$$
\end{exmp}

\begin{exmp} \label{t_norm_BQ}
It is well known \cite{Faucett1955,Klement2000,Mostert1957} that a continuous t-norm $\&$ on the unit interval $[0,1]$ can be written as  an ordinal sum of three basic t-norms: the minimum, the product, and the {\L}ukasiewicz t-norm. In this example, we describe the quantaloids of back diagonals with respect to these basic t-norms.
\begin{itemize}
\item (Minimum t-norm) For all $x,y\in[0,1]$, $x\&y =\min\{x,y\}$ and
  $$x\ra y=\begin{cases}
  1, & \text{if}\ x\leq y,\\
  y, & \text{if}\ x>y.
  \end{cases}$$
  Thus
  $$\BB(Q)(x,y)=\begin{cases}
  \{x,1\}, & \text{if}\ 0\leq x=y<1,\\
  \{1\}, & \text{otherwise}.
  \end{cases}$$
  For $b\in\BB(Q)(x,y)$, $b'\in\BB(Q)(y,z)$,
  $$b'\bullet b=\begin{cases}
  b, & \text{if}\ 0\leq b=b'=y<1,\\
  1, & \text{if}\ b=1\ \text{or}\ b'=1.
  \end{cases}$$
\item (Product t-norm) For all $x,y\in[0,1]$, $x\&y=xy$ and
  $$x\ra y=\min\Big\{1,\dfrac{y}{x}\Big\}.$$
  It is easy to see that, as quantales, $[0,1]$ together with the product t-norm is isomorphic to  Lawvere's quantale $([0,\infty]^{\op},+)$. In this case,
  $$\BB(Q)(x,y)=\begin{cases}
  [\max\{x,y\},1], & \text{if}\ 0<x,y\leq 1\\
  \{0,1\}, & \text{if}\ x=y=0,\\
  \{1\}, & \text{otherwise}.
  \end{cases}$$
  For $b\in\BB(Q)(x,y)$, $b'\in\BB(Q)(y,z)$,
  $$b'\bullet b=\begin{cases}
  0, & \text{if}\ b=b'=y=0,\\
  1, & \text{if}\ \max\{b,b'\}>0\ \text{and}\ y=0,\\
  \min\Big\{1,\dfrac{bb'}{y}\Big\}, & \text{otherwise}.
  \end{cases}$$
\item ({\L}ukasiewicz t-norm) For all $x,y\in[0,1]$, $x\&y=\max\{0,x+y-1\}$ and
  $$x\ra y=\min\{1,1-x+y\}.$$
  Thus
  $$\BB(Q)(x,y)=[\max\{x,y\},1].$$
  For $b\in\BB(Q)(x,y)$, $b'\in\BB(Q)(y,z)$,
  $$b'\bullet b=\min\{1,b+b'-y\}.$$
\end{itemize}
\end{exmp}

\subsection{A digression: The categories of arrows and diagonals in a quantaloid}


This subsection, meant to help understand the difference between diagonals and back diagonals in a quantaloid, recalls some basic properties of the arrow category and the Freyd completion of a quantaloid.  The reader is referred to \cite{Stubbe2014} for more on the category of diagonals in a quantaloid.

Given a quantaloid $\CQ$, the arrow category $\ArrQ$ of $\CQ$ (denoted by $\SqQ$ in \cite{Stubbe2014}) has $\CQ$-arrows as objects, and pairs of $\CQ$-arrows ($u:X_1\to X_2,\ v:Y_1\to Y_2$) satisfying
$$g\circ u=v\circ f$$
as arrows from $f:X_1\to Y_1$ to $g:X_2\to Y_2$. $\ArrQ$ is in fact a quantaloid with componentwise local order inherited from $\CQ$.

There is a fully faithful quantaloid homomorphism
\begin{equation} \label{Q_embed_SqQ}
\CQ\to\ArrQ:\ (f:X\to Y)\mapsto((f,f):1_X\to 1_Y)
\end{equation}
$$\bfig
\square<800,500>[X`Y`X`Y;f`1_X`1_Y`f]
\morphism(0,500)|r|/-->/<800,-500>[X`Y;f]
\efig$$
that embeds $\CQ$ in $\ArrQ$ as a both reflective and coreflective subquantaloid:

\begin{prop}
The fully faithful embedding (\ref{Q_embed_SqQ}) is left adjoint to the quantaloid homomorphism
$$\dom:\ArrQ\to\CQ$$
which maps an arrow $(u,v):(f:X_1\to Y_1)\to(g:X_2\to Y_2)$ in $\ArrQ$  to $u:X_1\to X_2$, and right adjoint to
$$\cod:\ArrQ\to\CQ$$
which maps $(u,v)$ to $v:Y_1\to Y_2$.
\end{prop}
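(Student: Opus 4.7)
The plan is to exhibit the two adjunctions by explicit natural bijections on hom-sets; both reduce to the observation that a commuting square with one side an identity arrow is essentially determined by a single $\CQ$-arrow. Write $E:\CQ\to\ArrQ$ for the embedding $(f:X\to Y)\mapsto((f,f):1_X\to 1_Y)$.

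To prove $E\dv\dom$, I would fix $X\in\ob\CQ$ and $g:X_2\to Y_2$ in $\ArrQ$ and analyse $\ArrQ(EX,g)$. An arrow $(u,v):(1_X:X\to X)\to g$ consists of $u:X\to X_2$ and $v:X\to Y_2$ satisfying $g\circ u=v\circ 1_X$, i.e.\ $v=g\circ u$. Hence $v$ is redundant, and the assignment $(u,v)\mapsto u$ yields a bijection
\[
\ArrQ(EX,g)\xrightarrow{\ \cong\ }\CQ(X,\dom g),\qquad (u,g\circ u)\mapsto u,
\]
with inverse $u\mapsto(u,g\circ u)$. A short check shows the bijection is order preserving (hence an isomorphism of hom-posets in this quantaloid setting) and natural in both variables: precomposition with an arrow $(s,t):EX'\to EX$ in $\ArrQ$ (which, by the same argument, is of the form $s=t$) corresponds on the right-hand side to precomposition with $s\in\CQ(X',X)$, while postcomposition with $(p,q):g\to g'$ sends $u$ to $p\circ u$. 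The unit of the adjunction is the identity natural transformation $1_\CQ\to\dom\cdot E=1_\CQ$, which is a direct consequence of $\dom\cdot E=1_\CQ$ on objects.

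For $\cod\dv E$, I would dualise: for $f:X_1\to Y_1$ and $Y\in\ob\CQ$, an arrow $(u,v):f\to EY=(1_Y:Y\to Y)$ in $\ArrQ$ consists of $u:X_1\to Y$ and $v:Y_1\to Y$ satisfying $1_Y\circ u=v\circ f$, i.e.\ $u=v\circ f$. The assignment $(u,v)\mapsto v$ thus gives a bijection
\[
\ArrQ(f,EY)\xrightarrow{\ \cong\ }\CQ(\cod f,Y),\qquad (v\circ f,v)\mapsto v,
\]
with inverse $v\mapsto(v\circ f,v)$. Order preservation and naturality in both variables are verified exactly as before, and the counit is the identity $\cod\cdot E=1_\CQ\to 1_\CQ$.

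There is no real obstacle here beyond bookkeeping: the key point in both cases is the absorption identity $g\circ u=v\circ 1_X$ (resp.\ $1_Y\circ u=v\circ f$), which collapses the two-arrow data of a square into a single $\CQ$-arrow as soon as one of the sides is an identity. Fullness and faithfulness of $E$ established earlier guarantee that the unit and counit of these adjunctions are identities, so $\CQ$ is both reflective and coreflective in $\ArrQ$ via $E$.
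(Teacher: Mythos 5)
Your proof is correct and follows essentially the same route as the paper: the paper likewise establishes the adjunctions via the hom-set bijections $\CQ(X,Y)\cong\ArrQ(1_X,g)$ (given by $h\mapsto(h,g\circ h)$) and $\CQ(Z,X)\cong\ArrQ(g,1_X)$ (given by $h\mapsto(h\circ g,h)$), exploiting exactly the collapse of a commuting square with an identity side to a single $\CQ$-arrow. Your additional remarks on order preservation, naturality, and the identity unit/counit are details the paper leaves implicit but are consistent with its argument.
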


\begin{proof}
For all $X\in\ob\CQ$ and $\CQ$-arrows $g:Y\to Z$, the assignment $h\mapsto g\circ h$ gives rise to
$$\CQ(X,Y)\cong\ArrQ(1_X,g),$$
and the assignment $h\mapsto h\circ g$ gives rise to
$$\CQ(Z,X)\cong\ArrQ(g,1_X).$$
\end{proof}

For  arrows $(u,v),(u',v'):f\to g$ in $\ArrQ$, denote by $(u,v)\sim(u',v')$ if the commutative squares
$$\bfig
\square/>`->`->`>/[X_1`X_2`Y_1`Y_2;u`f`g`v]
\square(1000,0)/.>`->`->`.>/[X_1`X_2`Y_1`Y_2;u'`f`g`v']
\morphism(0,500)/-->/<500,-500>[X_1`Y_2;]
\morphism(1000,500)/-->/<500,-500>[X_1`Y_2;]
\efig$$
have the same \emph{diagonal}. ``$\sim$'' gives rise to a congruence on $\ArrQ$, and the induced quotient quantaloid, denoted by $\DQ$, is called the quantaloid of \emph{diagonals} in $\CQ$ \cite{Stubbe2014}. The associated nucleus $\sk:\ArrQ\to\ArrQ$ sends each  arrow $(u,v):f\to g$ in $\ArrQ$ to
$$\sk(u,v):=(g\rda(g\circ u),\ (v\circ f)\lda f),$$  which is the largest  in the equivalence class of $(u,v)$. An arrow $(u,v)$ in $\ArrQ$ is said to be \emph{closed} if $(u,v)=\sk(u,v)$. Analogous to Proposition \ref{siuv} one has:

\begin{prop} \label{kiuv}
For each  arrow $(u,v):f\to g$ in $\ArrQ$,
$$\sk(u,v)=(g\rda(g\circ u),\ (g\circ u)\lda f)=(g\rda(v\circ f),\ (v\circ f)\lda f).$$
\end{prop}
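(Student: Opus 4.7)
The plan is to exploit the defining property of morphisms in $\ArrQ$: since $(u,v):f\to g$ is an arrow in $\ArrQ$, by definition the square commutes, i.e.\ $g\circ u = v\circ f$. Once this equality is recognized as available, both claimed reformulations reduce to routine substitution.

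More concretely, I would start from the definition already recorded, namely $\sk(u,v):=(g\rda(g\circ u),\ (v\circ f)\lda f)$. For the first claimed form, I would substitute $v\circ f$ for $g\circ u$ in the second component, yielding $(g\circ u)\lda f$ in place of $(v\circ f)\lda f$. For the second claimed form, I would do the symmetric substitution in the first component, replacing $g\circ u$ by $v\circ f$ inside $g\rda(-)$. Since $-\rda-$ and $-\lda-$ are binary operations on the hom-lattices of $\CQ$, they obviously respect equal inputs, so the substitution is legal and the three pairs represent the same element of $\ArrQ$.

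The only obstacle worth flagging is essentially cosmetic: one must make sure each of the three listed pairs is indeed a morphism $f\to g$ in $\ArrQ$, i.e.\ that $g\circ\pi_1 = \pi_2\circ f$ holds for the pair in question. This follows from Proposition \ref{adjoint_arrow_calculation}-style manipulations, but in fact once the three forms are shown equal, it suffices to verify the square condition for any one of them, which is precisely the content of $\sk$ being a nucleus on $\ArrQ$ (already asserted in the surrounding text). So the proof reduces to the one-line observation that commutativity of the defining square permits the free interchange of $g\circ u$ and $v\circ f$ in both components of $\sk(u,v)$.
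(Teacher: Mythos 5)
Your argument is correct and matches the paper's (implicit) reasoning: the paper states this proposition without proof as ``analogous to Proposition \ref{siuv}'', and the intended justification is exactly your observation that the defining identity $g\circ u=v\circ f$ of arrows in $\ArrQ$ lets one interchange $g\circ u$ and $v\circ f$ in the two components of $\sk(u,v):=(g\rda(g\circ u),\ (v\circ f)\lda f)$. Your closing remark that well-definedness of the resulting pairs as $\ArrQ$-arrows is already covered by $\sk$ being a nucleus is also the right way to dispose of that point.
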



\begin{exmp} \label{SqRel}
The structure of the quantaloid $\Arr(\Rel)$ is easy: an arrow from $R\subseteq X\times Y$ to $S\subseteq X'\times Y'$ consists of a pair of relations $U\subseteq X\times X'$ and $V\subseteq Y\times Y'$ such that $S\circ U=V\circ R$; that is,
$$\exists x'\in X':\ (x,x')\in U\ \text{and}\ (x',y')\in S\iff\exists y\in Y:\ (x,y)\in R\ \text{and}\ (y,y')\in V$$
for all $x\in X$, $y'\in Y'$.

In order to describe the quantaloid $\BD(\Rel)$, it suffices to describe the closed arrows in $\Arr(\Rel)$. For this, note that each relation $R\subseteq X\times Y$ induces a (covariant) Galois connection
$$R^*\dv R_*:{\bf 2}^X\to{\bf 2}^Y$$
with
$$R^*(B)=\{x\in X\mid\exists y\in B:\ (x,y)\in R\},\quad R_*(A)=\{y\in Y\mid\forall x\in X:\ (x,y)\in R\ \text{implies}\ x\in A\}.$$
Then an arrow $(U,V):R\to S$ in $\Arr(\Rel)$ is closed if and only if
$$(S^{\op})_*(S^{\op})^*(U^{\op})^*\{x\}=(U^{\op})^*\{x\}\quad\text{and}\quad R_* R^* V^*\{y'\}=V^*\{y'\}$$
for all $x\in X$, $y'\in Y'$.
\end{exmp}

\begin{exmp} \label{SqQB}
In the free quantaloid $\QB$ generated by a locally small category $\CB$ (see Example \ref{ChuConQB}), $(\Bu,\Bv):\Bf\to\Bg$ is an arrow in $\Arr(\QB)$ if $\Bg\circ\Bu=\Bv\circ\Bf$; that is, for all $g\in\Bg$, $u\in\Bu$, $g\circ u$ factors through some $v\in\Bv$ via some $f\in\Bf$, and vice versa. It is moreover closed if
$$u\in\Bu\iff\forall g\in\Bg:\ g\circ u\in\Bg\circ\Bu\quad\text{and}\quad v\in\Bv\iff\forall f\in\Bf:\ v\circ f\in\Bv\circ\Bf.$$
\end{exmp}

Similar to Proposition \ref{BQ_Bond_iso}, the quantaloid $\DQ$ may be equivalently described by a quantaloid with $\CQ$-arrows as objects, and $\CQ$-arrows $d:X_1\to Y_2$ with
$$(d\lda f)\circ f=d=g\circ(g\rda d)$$
as arrows from $f:X_1\to Y_1$ to $g:X_2\to Y_2$ \cite{Stubbe2014}. This characterization is particularly useful to describe diagonals in an integral quantale $(Q,\&)$ (considered as a one-object quantaloid as in Subsection \ref{BQ_quantale}):

\begin{exmp} \cite{Hohle2011}
If $(Q,\&)$ is a frame, or the quantale $([0,\infty]^{\op},+)$, or the interval $[0,1]$ coupled with a continuous t-norm, then for all $x,y\in Q$,
$$\BD(Q)(x,y)=\da\!(x\wedge y)=\{d\in Q\mid d\leq x\wedge y\}.$$
\end{exmp}

\subsection{Constructing Girard quantaloids}

From a symmetric monoidal closed category one may generate $*$-autonomous categories through the well known \emph{Chu construction} \cite{Barr1991,Pratt1995}. More generally, based on a closed bicategory Koslowski \cite{Koslowski2001} constructed cyclic $*$-autonomous bicategories, extending Barr's work \cite{Barr1995} on the nonsymmetric version of $*$-autonomous categories to a higher order.

\emph{Girard quantaloids} are locally ordered examples of cyclic $*$-autonomous bicategories in the sense of Koslowski \cite{Koslowski2001}. In this subsection, we show that diagonals and back diagonals in a quantaloid can be combined to construct a Girard quantaloid.

In a quantaloid $\CQ$, a family of $\CQ$-arrows $\FD=\{d_X:X\to X\}_{X\in\ob\CQ}$ is called a \emph{cyclic family} (resp. \emph{dualizing family}) if
$$d_X\lda f=f\rda d_Y\quad (\text{resp.}\ (d_X\lda f)\rda d_X=f=d_Y\lda(f\rda d_Y))$$
for all $\CQ$-arrows $f:X\to Y$. A \emph{Girard quantaloid} \cite{Rosenthal1992} is a quantaloid $\CQ$ equipped with a cyclic dualizing family $\FD$ of $\CQ$-arrows.

A one-object Girard quantaloid $\CQ$ is a Girard unital quantale \cite{Yetter1990}, which is an ordered example of cyclic $*$-autonomous categories \cite{Barr1995,Rosenthal1994}; as one expects, a commutative Girard unital quantale is exactly a $*$-autonomous category in the classical sense.

Now let us look at the embeddings (\ref{Q_embed_ChuConQ}) and (\ref{Q_embed_SqQ}) that respectively embed $\CQ$ in $\ChuConQ$ and $\ArrQ$. For each $\CQ$-arrow $f:X\to Y$, the diagonal of the embedding (\ref{Q_embed_SqQ}) is $f:X\to Y$, and the back diagonal of the embedding (\ref{Q_embed_ChuConQ}) is $\top_{Y,X}:Y\to X$; by putting them together one actually gets an assignment $f\mapsto(f,\top_{Y,X})$ that embeds $\CQ$ in a Girard quantaloid:

\begin{prop} \label{Q_QG}
Each quantaloid $\CQ$ is embedded in a Girard quantaloid $\CQ_G$.
\end{prop}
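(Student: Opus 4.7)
The plan is to explicitly construct a Girard quantaloid $\CQ_G$ that accommodates $\CQ$ via the stated embedding, exploiting the calculus of diagonals and back diagonals developed earlier in the section. For the construction, I would take the objects of $\CQ_G$ to be those of $\CQ$ (equivalently, the identity $\CQ$-arrows, which are the canonical source/target objects for the embeddings into $\DQ$ and $\BQ$). A morphism in $\CQ_G$ from $X$ to $Y$ is a pair $(d,b)$ where $d:X\to Y$ is a diagonal-type $\CQ$-arrow and $b:Y\to X$ is a back-diagonal-type $\CQ$-arrow. Composition is defined so that the diagonal slot uses the composition in $\CQ$ (inherited from $\DQ$) while the back-diagonal slot uses the contravariant bond composition (inherited from $\BQ\cong\Bond(\CQ)$); local joins are componentwise. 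The axioms of a quantaloid for $\CQ_G$ then follow at once from those of $\CQ$, $\DQ$, and $\BQ$.

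Next, I would equip $\CQ_G$ with the cyclic dualizing family $\FD=\{d_X\}_{X\in\ob\CQ_G}$ given by $d_X=(1_X,\top_{X,X})$, pairing the identity diagonal with the top back diagonal. To verify cyclicity, the equation $d_X\lda\phi=\phi\rda d_Y$ unfolds componentwise into two identities in $\CQ$: one in the diagonal slot where the identity $1_X$ makes the left and right implications collapse via Proposition \ref{adjoint_arrow_calculation}, and one in the back-diagonal slot where $\top_{X,X}$ ensures the two sides coincide by the absorption behaviour of $\top$ under the implications. For the dualizing property $(d_X\lda\phi)\rda d_X=\phi=d_Y\lda(\phi\rda d_Y)$, the componentwise unpacking reproduces precisely the nested-implication identities characterizing closed arrows in $\DQ$ and closed Chu connections in $\BQ$, so by Propositions \ref{siuv} and \ref{kiuv} the dualizing condition is satisfied either by restricting $\CQ_G$ to closed pairs or by running the closure operators $\sk$ and $\si$ on the two components.

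Finally, the assignment $f\mapsto(f,\top_{Y,X})$ is readily checked to be a faithful quantaloid homomorphism: faithfulness is immediate from the first coordinate, while preservation of composition works in the diagonal slot because $(g\circ f,\,-)=(g,\,-)\circ(f,\,-)$ by construction, and in the back-diagonal slot because the back diagonal of the image of $g\circ f$ is $\top_{Z,X}$, which is the correct composite of $\top_{Y,X}$ and $\top_{Z,Y}$ under the back-diagonal/bond composition of $\CQ_G$ (as already exhibited in Remark \ref{Chu span} and Subsection \ref{BQ}). Preservation of joins is componentwise and requires no further argument.

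The main obstacle will be the dualizing condition, which constrains both the choice of $d_X$ and the notion of back-diagonal composition on $\CQ_G$; a naive pairing $\DQ\times\BQ$ without compatibility between the two components would fail to be dualizing. The correct reading of Propositions \ref{siuv} and \ref{kiuv} as closure equations for nested implications is what makes the verification go through, and it is exactly this interplay between $1_X$ (which controls diagonals) and $\top_{X,X}$ (which controls back diagonals) that forces the specific form $f\mapsto(f,\top_{Y,X})$ of the embedding.
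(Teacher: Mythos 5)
There is a genuine gap: you never actually pin down a composition law for $\CQ_G$ that works, and the one you do sketch is not the right one. The construction that succeeds (and is the one the paper uses) takes $\CQ_G(X,Y)=\CQ(X,Y)\times\CQ(Y,X)$ with \emph{no} closedness restriction, joins $\bv_i(f_i,f'_i)=\big(\bv_i f_i,\bw_i f'_i\big)$, and composition
$$(g,g')\circ(f,f')=\big(g\circ f,\ (f'\lda g)\wedge(f\rda g')\big).$$
The second component here genuinely mixes the two slots: it is a meet of two implications each involving one unprimed and one primed entry. It is \emph{not} the bond composition of $\BQ\cong\Bond(\CQ)$, which moreover is only defined relative to specified endpoint $\CQ$-arrows (and the canonical endpoints for the two embeddings disagree: $1_X$ for $\DQ$ versus $\top_{X,X}$ for $\BQ$, and with endpoints $\top_{X,X}$ the only bond is $\top_{Y,X}$, so that hom-set collapses). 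So your ``diagonal slot composes as in $\DQ$, back-diagonal slot composes as in $\BQ$'' prescription does not define the quantaloid you need, and your own closing caveat --- that a naive pairing of $\DQ$ and $\BQ$ fails to be dualizing --- is exactly the obstruction; neither of your proposed fixes (restricting to closed pairs, or applying $\sk$ and $\si$ componentwise) produces a cyclic dualizing family, and neither is verified.

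A second, related error: you take the cyclic dualizing family to be $d_X=(1_X,\top_{X,X})$, but in the correct $\CQ_G$ that pair is the \emph{identity} arrow on $X$; the dualizing family is $\FD=\{(\top_{X,X},1_X)\}_{X\in\ob\CQ}$. With the correct implications, e.g.\ $(h,h')\lda(f,f')=\big((h\lda f)\wedge(h'\rda f'),\ f\circ h'\big)$, one computes $(\top_{X,X},1_X)\lda(f,f')=(f',f)=(f,f')\rda(\top_{Y,Y},1_Y)$ and $(f',f)\rda(\top_{X,X},1_X)=(f,f')$, which gives cyclicity and dualizability for \emph{all} pairs $(f,f')$ without any appeal to the closure operators of Propositions \ref{siuv} and \ref{kiuv}. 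Your claim that the verification ``unfolds componentwise'' is therefore not accurate: the implications in $\CQ_G$ swap and mix the components rather than acting slot by slot. The guiding intuition you state --- pair a diagonal-direction arrow with a back-diagonal-direction arrow and embed via $f\mapsto(f,\top_{Y,X})$ --- does match the paper's motivation, but without the correct composition, implications, and dualizing family the argument does not go through.
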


\begin{proof}
Define a quantaloid $\CQ_G$ with $\ob\CQ_G=\ob\CQ$ as follows:
\begin{itemize}
\item for $X,Y\in\ob\CQ$, $\CQ_G(X,Y)=\CQ(X,Y)\times\CQ(Y,X)$;
\item for $\{(f_i,f'_i)\}_{i\in I}\subseteq\CQ_G(X,Y)$, $\displaystyle\bv_{i\in I}(f_i,f'_i)=\Big(\displaystyle\bv_{i\in I}f_i,\displaystyle\bw_{i\in I}f'_i\Big)$;
\item for $\CQ_G$-arrows $(f,f'):X\to Y$, $(g,g'):Y\to Z$, $(h,h'):X\to Z$,
\begin{align*}
(g,g')\circ(f,f')&=(g\circ f,\ (f'\lda g)\wedge(f\rda g')),\\
(h,h')\lda(f,f')&=((h\lda f)\wedge(h'\rda f'),\ f\circ h'),\\
(g,g')\rda(h,h')&=((g\rda h)\wedge(g'\lda h'),\ h'\circ g);
\end{align*}
\item the identity $\CQ_G$-arrow on $X\in\ob\CQ$ is $(1_X,\top_{X,X}):X\to X$.
\end{itemize}

$\CQ_G$ is a quantaloid since
\begin{align*}
(g,g')\circ(f,f')\leq(h,h')&\iff g\circ f\leq h\ \text{and}\ h'\leq f'\lda g\ \text{and}\ h'\leq f\rda g'\\
&\iff g\leq h\lda f\ \text{and}\ g\leq h'\rda f'\ \text{and}\ f\circ h'\leq g'\\
&\iff(g,g')\leq(h,h')\lda(f,f')
\end{align*}
and $(g,g')\circ(f,f')\leq(h,h')\iff(f,f')\leq(g,g')\rda(h,h')$ can be deduced similarly.

Let
$$\FD=\{(\top_{X,X},1_X):X\to X\}_{X\in\ob\CQ},$$
then $\CD$ is a cyclic family since
$$(\top_{X,X},1_X)\lda(f,f')=(f',f)=(f,f')\rda(\top_{Y,Y},1_Y);$$
$\CD$ is a dualizing family since
$$((\top_{X,X},1_X)\lda(f,f'))\rda(\top_{X,X},1_X)=(f',f)\rda(\top_{X,X},1_X)=(f,f').$$

Therefore, the assignment
\begin{equation} \label{Q_QG_embedding}
(f:X\to Y)\mapsto((f,\top_{Y,X}):X\to Y)
\end{equation}
defines a faithful quantaloid homomorphism that embeds $\CQ$ in the Girard quantaloid $\CQ_G$.
\end{proof}

In the case that $\CQ$ is a unital quantale, Proposition \ref{Q_QG} reduces to \cite[Theorem 6.1.3]{Rosenthal1990}, and the construction of $\CQ_G$ becomes the standard Chu construction over $\CQ$ (as a monoidal closed category).

\section{Chu connections and back diagonals between $\CQ$-distributors} \label{ChuConQDist}

In this section we are concerned with Chu connections and back diagonals in a special closed bicategory, i.e., the bicategory of distributors between categories enriched in a small quantaloid.

\subsection{Quantaloid-enriched categories} \label{Q_Cat}

In order to avoid size issues, from now on a \emph{small} quantaloid $\CQ$ is fixed as a base category for enrichment, and we shall use the notations of (small) $\CQ$-categories, $\CQ$-distributors and $\CQ$-functors mostly as in \cite{Shen2014,Shen2013a}. For the convenience of the readers, we take a quick tour of the preliminaries in this subsection.

A (small) \emph{$\CQ$-category} $\bbA$ consists of a set $\bbA_0$ as objects, a \emph{type} map $t:\bbA_0\to\ob\CQ$, and hom-arrows $\bbA(x,y)\in\CQ(tx,ty)$ such that $1_{tx}\leq\bbA(x,x)$ and $\bbA(y,z)\circ\bbA(x,y)\leq\bbA(x,z)$ for all $x,y,z\in\bbA_0$. $\bbB$ is a (full) \emph{$\CQ$-subcategory} of $\bbA$ if $\bbB_0\subseteq\bbA_0$ and $\bbB(x,y)=\bbA(x,y)$ whenever $x,y\in\bbB_0$.

For a $\CQ$-category $\bbA$, the underlying (pre)order on $\bbA_0$ is given by
$$x\leq y\iff tx=ty=X\ \text{and}\ 1_X\leq\bbA(x,y);$$
$\bbA$ is \emph{skeletal} if the underlying order on $\bbA_0$ is a partial order. $\bbA$ is called \emph{order-complete} if each $\bbA_X$, the $\CQ$-subcategory of $\bbA$ with all the objects of type $X\in\ob\CQ$, admits all joins in the underlying order.

A \emph{$\CQ$-distributor} $\phi:\bbA\oto\bbB$ between $\CQ$-categories is a map that assigns to each pair $(x,y)\in\bbA_0\times\bbB_0$ a $\CQ$-arrow $\phi(x,y)\in\CQ(tx,ty)$, such that $\bbB(y,y')\circ\phi(x,y)\circ\bbA(x',x)\leq\phi(x',y')$ for all $x,x'\in\bbA_0$, $y,y'\in\bbB_0$. With the pointwise order inherited from $\CQ$, the locally ordered 2-category $\QDist$ of $\CQ$-categories and $\CQ$-distributors is in fact a (large) quantaloid in which
\begin{align*}
&\psi\circ\phi:\bbA\oto\bbC,\quad(\psi\circ\phi)(x,z)=\bv_{y\in\bbB_0}\psi(y,z)\circ\phi(x,y),\\
&\xi\lda\phi:\bbB\oto\bbC,\quad(\xi\lda\phi)(y,z)=\bw_{x\in\bbA_0}\xi(x,z)\lda\phi(x,y),\\
&\psi\rda\xi:\bbA\oto\bbB,\quad (\psi\rda\xi)(x,y)=\bw_{z\in\bbC_0}\psi(y,z)\rda\xi(x,z)
\end{align*}
for $\CQ$-distributors $\phi:\bbA\oto\bbB$, $\psi:\bbB\oto\bbC$, $\xi:\bbA\oto\bbC$; the identity $\CQ$-distributor on $\bbA$ is given by hom-arrows $\bbA:\bbA\oto\bbA$. Adjoint $\CQ$-distributors are exactly adjoint arrows in the quantaloid $\QDist$.

A \emph{$\CQ$-functor} (resp. \emph{fully faithful} $\CQ$-functor) $F:\bbA\to\bbB$ between $\CQ$-categories is a map $F:\bbA_0\to\bbB_0$ such that $tx=t(Fx)$ and $\bbA(x,y)\leq\bbB(Fx,Fy)$ (resp. $\bbA(x,y)=\bbB(Fx,Fy)$) for all $x,y\in\bbA_0$. With the pointwise order of $\CQ$-functors
$$F\leq G:\bbA\to\bbB\iff\forall x\in\bbA_0:\ Fx\leq Gx\ \text{in}\ \bbB_0,$$
$\CQ$-categories and $\CQ$-functors constitute a locally ordered 2-category $\QCat$. Adjoint $\CQ$-functors are exactly adjoint 1-cells in $\QCat$, while fully faithful and bijective $\CQ$-functors are isomorphisms in $\QCat$.

Each $\CQ$-functor $F:A\to B$ induces an adjunction $F_{\nat}\dv F^{\nat}$ in $\QDist$ given by
\begin{align*}
&F_{\nat}:\bbA\oto\bbB,\quad F_{\nat}(x,y)=\bbB(Fx,y),\\
&F^{\nat}:\bbB\oto\bbA,\quad F^{\nat}(y,x)=\bbB(y,Fx),
\end{align*}
which are both 2-functorial as
$$(-)_{\nat}:(\QCat)^{\co}\to\QDist,\quad(-)^{\nat}:(\QCat)^{\op}\to\QDist,$$
where ``$\co$'' refers to the dualization of 2-cells.

\begin{prop} {\rm\cite{Shen2013a}} \label{fully_faithful_graph}
A $\CQ$-functor $F:\bbA\to\bbB$ is fully faithful if and only if $F^{\nat}\circ F_{\nat}=\bbA$.
\end{prop}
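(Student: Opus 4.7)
The plan is to compute the composite $F^{\nat}\circ F_{\nat}$ pointwise and observe that it equals the distributor $\bbB(F-,F-)$, so the equation $F^{\nat}\circ F_{\nat}=\bbA$ is literally the fully faithful condition.

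First, I would unfold the formula for composition in $\QDist$ using the definitions of $F_{\nat}$ and $F^{\nat}$: for $x,x'\in\bbA_0$,
\[
(F^{\nat}\circ F_{\nat})(x,x')=\bv_{y\in\bbB_0} F^{\nat}(y,x')\circ F_{\nat}(x,y)=\bv_{y\in\bbB_0}\bbB(y,Fx')\circ\bbB(Fx,y).
\]
I would then show that this join equals $\bbB(Fx,Fx')$. The inequality $\leq$ is immediate from associativity of composition in $\bbB$, i.e.\ the hom-arrow condition $\bbB(y,Fx')\circ\bbB(Fx,y)\leq\bbB(Fx,Fx')$. For the reverse inequality, I would evaluate the summand at the particular index $y=Fx'$ and use $1_{tFx'}\leq\bbB(Fx',Fx')$ to obtain
\[
\bbB(Fx',Fx')\circ\bbB(Fx,Fx')\geq 1_{tFx'}\circ\bbB(Fx,Fx')=\bbB(Fx,Fx'),
\]
which gives $\bbB(Fx,Fx')\leq(F^{\nat}\circ F_{\nat})(x,x')$. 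Hence $(F^{\nat}\circ F_{\nat})(x,x')=\bbB(Fx,Fx')$.

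With this identity in hand, the equivalence is transparent: $F^{\nat}\circ F_{\nat}=\bbA$ as $\CQ$-distributors $\bbA\oto\bbA$ means $\bbB(Fx,Fx')=\bbA(x,x')$ for every pair $x,x'\in\bbA_0$, which is exactly the definition of $F$ being fully faithful. Conversely, if $F$ is fully faithful, then the displayed identity immediately yields $F^{\nat}\circ F_{\nat}=\bbA$. Since all the ingredients are routine calculations in $\QDist$, there is no real obstacle; the only point that requires a small observation is the choice $y=Fx'$ to achieve the lower bound, but this is the standard ``reflexivity of hom-arrows'' trick.
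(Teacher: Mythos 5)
Your computation is correct and complete: the pointwise identity $(F^{\nat}\circ F_{\nat})(x,x')=\bbB(Fx,Fx')$, obtained by bounding the join above via the composition law for hom-arrows and below via the index $y=Fx'$ together with $1_{t(Fx')}\leq\bbB(Fx',Fx')$, immediately reduces the stated equivalence to the definition of full faithfulness. The paper cites this result from [Shen2013a] without reproducing a proof, and your argument is the standard one given there.
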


\begin{prop} {\rm\cite{Stubbe2005}} \label{adjoint_graph}
Let $F:\bbA\to\bbB$, $G:\bbB\to\bbA$ be a pair of $\CQ$-functors. Then
$$F\dv G\ \text{in}\ \QCat\iff F_{\nat}=G^{\nat}\iff G_{\nat}\dv F_{\nat}\ \text{in}\ \QDist\iff G^{\nat}\dv F^{\nat}\ \text{in}\ \QDist.$$
\end{prop}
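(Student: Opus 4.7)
The plan is to establish the chain of equivalences by first proving $F\dv G$ in $\QCat$ iff $F_{\nat}=G^{\nat}$ by a direct pointwise computation, and then exploiting the 2-functoriality of $(-)_{\nat}$ and $(-)^{\nat}$ together with the uniqueness of adjoints in the quantaloid $\QDist$.

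For the first step, I would unfold the definitions. We have $F\dv G$ in $\QCat$ precisely when $1_{\bbA}\leq GF$ and $FG\leq 1_{\bbB}$ in the pointwise order on $\CQ$-functors; the standard manipulation shows this is equivalent to the ``adjunction identity'' $\bbB(Fx,y)=\bbA(x,Gy)$ for all $x\in\bbA_0$, $y\in\bbB_0$. (The nontrivial direction uses $\bbA(x,Gy)\leq\bbB(Fx,FGy)\leq\bbB(Fx,y)$ via $FG\leq 1_{\bbB}$, and symmetrically.) Since $F_{\nat}(x,y)=\bbB(Fx,y)$ and $G^{\nat}(x,y)=\bbA(x,Gy)$ are both $\CQ$-distributors $\bbA\oto\bbB$, this is exactly the equality $F_{\nat}=G^{\nat}$.

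For the second and third equivalences, I would invoke the 2-functors $(-)_{\nat}:\QCat^{\co}\to\QDist$ and $(-)^{\nat}:\QCat^{\op}\to\QDist$ recalled just before the statement. The quick computations (both instances of the enriched Yoneda identity)
\[
(GF)_{\nat}=G_{\nat}\circ F_{\nat},\qquad (GF)^{\nat}=F^{\nat}\circ G^{\nat},
\]
together with $\bbA_{\nat}=\bbA=\bbA^{\nat}$, allow one to translate the inequalities $1_{\bbA}\leq GF$ and $FG\leq 1_{\bbB}$ of $\QCat$ directly into $G_{\nat}\circ F_{\nat}\leq\bbA$ and $\bbB\leq F_{\nat}\circ G_{\nat}$ (via $(-)_{\nat}$, which reverses 2-cells), i.e.\ $G_{\nat}\dv F_{\nat}$ in $\QDist$, and into $\bbA\leq F^{\nat}\circ G^{\nat}$ and $G^{\nat}\circ F^{\nat}\leq\bbB$ (via $(-)^{\nat}$, which preserves 2-cells), i.e.\ $G^{\nat}\dv F^{\nat}$ in $\QDist$.

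Finally, to close the loop back to $F_{\nat}=G^{\nat}$ from either $G_{\nat}\dv F_{\nat}$ or $G^{\nat}\dv F^{\nat}$, I would use the uniqueness of adjoints in a quantaloid: every $\CQ$-functor $H$ gives an adjunction $H_{\nat}\dv H^{\nat}$ in $\QDist$, so if $G_{\nat}\dv F_{\nat}$ then $F_{\nat}=G^{\nat}$ because right adjoints in the locally ordered setting of $\QDist$ are uniquely determined; symmetrically, if $G^{\nat}\dv F^{\nat}$ then $G^{\nat}=F_{\nat}$. I do not anticipate a real obstacle: every step is either a Yoneda-style bookkeeping computation or an appeal to 2-functoriality and uniqueness of adjoints, so the only thing to be careful about is tracking the variances of the two 2-functors $(-)_{\nat}$ and $(-)^{\nat}$ correctly.
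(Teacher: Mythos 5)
The paper does not prove this proposition: it is quoted verbatim from Stubbe's work (the citation \cite{Stubbe2005} in the statement), so there is no in-paper argument to compare yours against. Your proof is correct and complete: the pointwise verification that $F\dv G$ amounts to $\bbB(Fx,y)=\bbA(x,Gy)$, i.e.\ $F_{\nat}=G^{\nat}$, is the standard one; the identities $(GF)_{\nat}=G_{\nat}\circ F_{\nat}$ and $(GF)^{\nat}=F^{\nat}\circ G^{\nat}$ together with the stated variances of the two 2-functors do yield the forward implications to $G_{\nat}\dv F_{\nat}$ and $G^{\nat}\dv F^{\nat}$; and closing the loop by comparing with the canonical adjunctions $F_{\nat}\dv F^{\nat}$, $G_{\nat}\dv G^{\nat}$ and invoking uniqueness of (left or right) adjoints in the locally ordered 2-category $\QDist$ is exactly the right move.
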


\begin{rem}  \label{Qcat_dual}
The dual of a $\CQ$-category $\bbA$ is a $\CQ^{\op}$-category, given by $\bbA^{\op}_0=\bbA_0$ and $\bbA^{\op}(x,y)=\bbA(y,x)$ for all $x,y\in\bbA_0$. Each $\CQ$-functor $F:\bbA\to\bbB$ becomes a $\CQ^{\op}$-functor $F^{\op}:\bbA^{\op}\to\bbB^{\op}$ with the same mapping on objects but $(F')^{\op}\leq F^{\op}$ whenever $F\leq F':\bbA\to\bbB$. Each $\CQ$-distributor $\phi:\bbA\oto\bbB$ corresponds bijectively to a $\CQ^{\op}$-distributor $\phi^{\op}:\bbB^{\op}\oto\bbA^{\op}$ with $\phi^{\op}(y,x)=\phi(x,y)$ for all $x\in\bbA_0$, $y\in\bbB_0$. Therefore, as already noted in \cite{Stubbe2005}, one has a 2-isomorphism
$$(-)^{\op}:\QCat\cong(\CQ^{\op}\text{-}\Cat)^{\co}$$
and an isomorphism of quantaloids
$$(-)^{\op}:\QDist\cong(\CQ^{\op}\text{-}\Dist)^{\op}.$$
We note in passing that our terminologies of quantaloid-enriched categories are not exactly the same as  in our  main references,  \cite{Stubbe2005,Stubbe2006}, on the subject: 
the $\CQ$-categories and $\CQ$-distributors here are exactly $\CQ^{\op}$-categories and $\CQ^{\op}$-distributors there.
\end{rem}

A \emph{presheaf} with type $X$ on a $\CQ$-category $\bbA$ is a $\CQ$-distributor $\mu:\bbA\oto *_X$, where $*_X$ is the $\CQ$-category with only one object of type $X$. Presheaves on $\bbA$ constitute a $\CQ$-category $\PA$ with $\PA(\mu,\mu')=\mu'\lda\mu$ for all $\mu,\mu'\in\PA$. Dually, the $\CQ$-category $\PdA$ of \emph{copresheaves} on $\bbA$ consists of $\CQ$-distributors $\lam:*_X\oto\bbA$ as objects with type $X$ and $\PdA(\lam,\lam')=\lam'\rda\lam$ for all $\lam,\lam'\in\PdA$. It is easy to see that $\PdA\cong(\PA^{\op})^{\op}$.

A $\CQ$-category $\bbA$ is \emph{complete} if the \emph{Yoneda embedding} $\sY:\bbA\to\PA,\ x\mapsto\bbA(-,x)$ has a left adjoint $\sup:\PA\to\bbA$ in $\QCat$; that is,
$$\bbA(\sup\mu,-)=\PA(\mu,\sY_{\bbA}-)=\bbA\lda\mu$$
for all $\mu\in\PA$. It is well known that $\bbA$ is a complete $\CQ$-category if and only if $\bbA^{\op}$ is a complete $\CQ^{\op}$-category \cite{Stubbe2005},
where the completeness of $\bbA^{\op}$ may be translated as the \emph{co-Yoneda embedding} $\sYd:\bbA\oto\PdA,\ x\mapsto\bbA(x,-)$ admitting a right adjoint $\inf:\PdA\to\bbA$ in $\QCat$.

\begin{lem}[Yoneda] {\rm\cite{Stubbe2005}} \label{Yoneda_lemma}
Let $\bbA$ be a $\CQ$-category and $\mu\in\PA$, $\lam\in\PdA$. Then
$$\mu=\PA(\sY-,\mu)=\sY_{\nat}(-,\mu),\quad\lam=\PdA(\lam,\sYd-)=(\sYd)^{\nat}(\lam,-).$$
In particular, both $\sY$ and $\sYd$ are fully faithful $\CQ$-functors.
\end{lem}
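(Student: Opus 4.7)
The plan is to compute both sides of $\mu=\PA(\sY-,\mu)$ pointwise, using nothing more than the definition of $\PA$, the distributor axiom for $\mu$, and the identity law $1_{tx}\leq \bbA(x,x)$. Once the presheaf equation is in hand, the copresheaf equation follows by the duality $(-)^{\op}\colon\QDist\cong(\CQ^{\op}\text{-}\Dist)^{\op}$ from Remark \ref{Qcat_dual}, and full faithfulness of $\sY$ and $\sYd$ drops out as a direct corollary.

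First, I would unfold the right-hand side. By definition of $\sY$ and of $\PA$,
$$\PA(\sY x,\mu)=\mu\lda\sY x=\mu\lda\bbA(-,x)=\bw_{y\in\bbA_0}\mu(y)\lda\bbA(y,x),$$
and $\sY_{\nat}(x,\mu)=\PA(\sY x,\mu)$ holds by the very definition of the graph of a $\CQ$-functor. So it suffices to show $\mu(x)=\bw_{y}\mu(y)\lda\bbA(y,x)$.

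Second, I would establish the two inequalities. For ``$\geq$'', take $y=x$ in the infimum; since $1_{tx}\leq\bbA(x,x)$ and $-\lda-$ is antitone in its second argument, we get
$$\bw_{y\in\bbA_0}\mu(y)\lda\bbA(y,x)\ \leq\ \mu(x)\lda\bbA(x,x)\ \leq\ \mu(x)\lda 1_{tx}=\mu(x).$$
For ``$\leq$'', the defining inequality of the $\CQ$-distributor $\mu\colon\bbA\oto *_{t\mu}$ (namely $\mu(x)\circ\bbA(y,x)\leq\mu(y)$, coming from the compatibility $\mu(x)\circ\bbA(x',x)\leq\mu(x')$ with $x'=y$) rewrites, via the adjunction (\ref{composition_implication_adjoint}), as $\mu(x)\leq\mu(y)\lda\bbA(y,x)$ for every $y\in\bbA_0$. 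Taking the meet over $y$ gives the reverse inequality, so the two are equal.

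Third, the copresheaf statement $\lam=\PdA(\lam,\sYd-)=(\sYd)^{\nat}(\lam,-)$ follows by applying what we just proved inside $\CQ^{\op}\text{-}\Dist$, using the isomorphism of Remark \ref{Qcat_dual} that sends $\bbA$ to $\bbA^{\op}$, turns copresheaves on $\bbA$ into presheaves on $\bbA^{\op}$, and turns $\sYd_{\bbA}$ into $\sY_{\bbA^{\op}}$. Finally, for the ``in particular'' clause, specializing the first identity to $\mu=\sY y=\bbA(-,y)$ yields $\bbA(x,y)=\sY y(x)=\PA(\sY x,\sY y)$, which is exactly full faithfulness of $\sY$; the same specialization for $\sYd$ gives full faithfulness of $\sYd$. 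There is no real obstacle in this proof — the only subtle point is matching the direction of the distributor compatibility axiom with the side on which the implication $\lda$ is being taken, and this is handled cleanly by one application of (\ref{composition_implication_adjoint}).
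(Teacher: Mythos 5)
Your proof is correct: the pointwise computation $\mu(x)=\bw_{y}\mu(y)\lda\bbA(y,x)$ via the two inequalities (identity law for ``$\geq$'', the distributor axiom plus the adjunction (\ref{composition_implication_adjoint}) for ``$\leq$'') is exactly the standard enriched Yoneda argument, and the duality step and the specialization $\mu=\sY y$ for full faithfulness are both sound. The paper itself offers no proof --- it cites the lemma from Stubbe's work --- so there is nothing to compare against beyond noting that your argument is the expected one.
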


In a $\CQ$-category $\bbA$, the \emph{tensor} of $f\in\CP(tx)$ and $x\in\bbA_0$ (here $\CP(tx)$ stands for $\CP *_{tx}$, and $f\in\CP(tx)$ is essentially a $\CQ$-arrow with domain $tx$) , denoted by $f\otimes x$, is an object in $\bbA_0$ of type $t(f\otimes x)=tf$ such that $\bbA(f\otimes x,-)=\bbA(x,-)\lda f$. $\bbA$ is \emph{tensored} if $f\otimes x$ exists for all choices of $f$ and $x$; $\bbA$ is \emph{cotensored} if $\bbA^{\op}$ is tensored.

\begin{thm} {\rm\cite{Stubbe2006}} \label{complete_tensor}
A $\CQ$-category $\bbA$ is complete if, and only if, $\bbA$ is tensored, cotensored and order-complete.
\end{thm}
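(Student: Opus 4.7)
The plan is to prove both implications by explicitly constructing the missing data, relying on the Yoneda lemma and the adjunction $\sup\dv\sY$. For the forward direction, assume $\bbA$ is complete, so $\sup\dv\sY:\PA\to\bbA$. To build tensors I would set $f\otimes x:=\sup(f\circ\sY x)$ for $f\in\CP(tx)$ and $x\in\bbA_0$, noting that $f\circ\sY x:\bbA\oto *_{tf}$ is indeed a presheaf. Using the identity $(h\lda g)\lda f=h\lda(f\circ g)$ in the quantaloid $\QDist$ together with Yoneda's $\sY z\lda\sY x=\bbA(x,z)$, one computes
$$\bbA(\sup(f\circ\sY x),z)=\sY z\lda(f\circ\sY x)=(\sY z\lda\sY x)\lda f=\bbA(x,z)\lda f,$$
confirming the tensor axiom $\bbA(f\otimes x,-)=\bbA(x,-)\lda f$. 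Cotensoredness then follows from the duality $\bbA$ complete iff $\bbA^{\op}$ complete, by applying the tensor construction in $\bbA^{\op}$. For order-completeness, given $\{x_i\}_{i\in I}\subseteq\bbA_X$, I would form the presheaf $\mu=\bv_i\sY x_i$, which is well defined since the $\sY x_i$ share the common type $X$, and put $a=\sup\mu$; from $\bbA(a,-)=\bbA\lda\mu=\bw_i\bbA(x_i,-)$ one reads off that $a$ is both an upper bound of $\{x_i\}$ and dominated by every other upper bound in $\bbA_X$.

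For the converse, assuming $\bbA$ is tensored, cotensored and order-complete, for $\mu\in\PA$ of type $X$ I would define
$$\sup\mu:=\bv_{y\in\bbA_0}(\mu(y)\otimes y),$$
each summand having type $X$ by definition of tensor, and the join taken in the underlying order of $\bbA_X$, which exists by order-completeness. The essential step is verifying
$$\bbA(\sup\mu,z)=(\bbA\lda\mu)(z)=\bw_{y\in\bbA_0}\bbA(y,z)\lda\mu(y)$$
for every $z\in\bbA_0$. The tensor axiom rewrites the right-hand side as $\bw_y\bbA(\mu(y)\otimes y,z)$, so the whole identity reduces to
$$\bbA\Bigl(\bv_y(\mu(y)\otimes y),z\Bigr)=\bw_y\bbA(\mu(y)\otimes y,z),$$
i.e., to the assertion that underlying-order joins in $\bbA_X$ are sent by $\bbA(-,z)$ to meets in $\CQ(X,tz)$.

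The main obstacle is exactly this last identity: an underlying-order join in a $\CQ$-category is not automatically preserved by the contravariant hom into $\CQ$, and it is precisely the role of cotensors to guarantee this preservation. Given any $g\in\CQ(Y,tz)$, the cotensor $\langle g,z\rangle$ satisfies $\bbA(-,\langle g,z\rangle)=g\rda\bbA(-,z)$; combining this universal property with that of tensors and with the composition--implication adjunctions in $\CQ$, one shows that $\bbA(-,z):\bbA_X^{\op}\to\CQ(X,tz)$ carries underlying-order suprema to infima. Once this dualization principle is in hand, the identity above follows and the adjunction $\sup\dv\sY$ is established, completing the converse; the construction also furnishes the explicit formula $\sup\mu=\bv_y(\mu(y)\otimes y)$ for arbitrary suprema.
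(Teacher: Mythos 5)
The paper does not prove this statement: it is quoted verbatim from Stubbe's paper \cite{Stubbe2006}, so there is no in-text proof to compare against. Your argument is correct and is the standard one: the tensor formula $f\otimes x=\sup(f\circ\sY x)$, cotensors by dualizing, order-completeness via $\sup\bigl(\bv_i\sY x_i\bigr)$, and conversely $\sup\mu=\bv_{y}(\mu(y)\otimes y)$, with the crucial point being exactly the Galois connection $g\leq\bbA(x,z)\iff x\leq\langle g,z\rangle$ furnished by cotensors, which makes $\bbA(-,z)$ turn underlying joins in $\bbA_{t\mu}$ into meets. The only step left implicit is that the object assignment $\mu\mapsto\sup\mu$ is automatically a $\CQ$-functor left adjoint to $\sY$ once $\bbA(\sup\mu,-)=\PA(\mu,\sY-)$ holds objectwise, which is routine.
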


\begin{exmp} \label{PA_tensor_complete} {\rm\cite{Stubbe2006}}
For each $\CQ$-category $\bbA$, $\PA$ and $\PdA$ are both skeletal, tensored, cotensored and complete $\CQ$-categories. In particular, tensors in $\PA$ are given by $f\otimes\mu=f\circ\mu$ for all $\mu\in\PA$ and $f\in\CP(t\mu)$.
\end{exmp}

\begin{prop} {\rm\cite{Stubbe2005,Stubbe2006}} \label{la_condition}
Let $F:\bbA\to\bbB$ be a $\CQ$-functor, with $\bbA$ complete. The following statements are equivalent:
\begin{itemize}
\item[\rm (i)] $F$ is a left adjoint in $\QCat$.
\item[\rm (ii)] $F$ is \emph{sup-preserving} in the sense that $F({\sup}_{\bbA}\mu)={\sup}_{\bbB}(\mu\circ F^{\nat})$ for all $\mu\in\PA$.
\item[\rm (iii)] $F$ is a left adjoint between the underlying ordered sets of $\bbA$, $\bbB$, and preserves tensors in the sense that $F(f\otimes_{\bbA}x)= f\otimes_{\bbB}Fx$ for all $x\in\bbA_0$, $f\in\CP(tx)$.
\end{itemize}
\end{prop}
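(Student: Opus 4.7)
The plan is to prove the cycle (i)$\Rightarrow$(ii)$\Rightarrow$(iii)$\Rightarrow$(i).

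For (i)$\Rightarrow$(ii), given $F\dv G$, Proposition~\ref{adjoint_graph} yields $\bbB(Fx,b)=\bbA(x,Gb)$. I would unfold both sides of the desired equation via the defining property of $\sup$. The left side becomes
\[\bbB(F\sup\mu,b)=\bbA(\sup\mu,Gb)=\bigwedge_{x\in\bbA_{0}}\bbA(x,Gb)\lda\mu(x,*)=\bigwedge_{x}\bbB(Fx,b)\lda\mu(x,*),\]
while the right side, after expanding $\bbB\lda(\mu\circ F^{\nat})$ using the identities $a\lda\bigvee_{i}b_{i}=\bigwedge_{i}(a\lda b_{i})$, $a\lda(c\circ b)=(a\lda b)\lda c$, and the Yoneda-type collapse $\bigwedge_{b'}\bbB(b',b)\lda\bbB(b',Fx)=(\bbB\lda\bbB)(Fx,b)=\bbB(Fx,b)$, produces the same expression.

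For (ii)$\Rightarrow$(iii), a short direct calculation shows that in a tensored $\CQ$-category the tensor $f\otimes x$ coincides with $\sup_{\bbA}(f\circ\sY x)$, while $\CQ$-functoriality of $F$ gives $\sY x\circ F^{\nat}=\sY(Fx)$. Applying (ii) then yields
\[F(f\otimes x)=F\sup(f\circ\sY x)=\sup_{\bbB}((f\circ\sY x)\circ F^{\nat})=\sup_{\bbB}(f\circ\sY(Fx))=f\otimes Fx.\]
Any underlying join $\bigvee S$ in $\bbA_{X}$ is also a sup (namely, of the presheaf $\bigvee_{x\in S}\sY x$), so (ii) implies $F$ preserves underlying joins; since $\bbA$ is order-complete, each $\bbA_{X}$ is a complete lattice and hence $F$ is a left adjoint between the underlying ordered sets.

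For (iii)$\Rightarrow$(i), I would define the candidate right adjoint $G:\bbB\to\bbA$ by $Gb:=\inf_{\bbA}\lam_{b}$, where $\lam_{b}\in\PdA$ is the copresheaf of type $tb$ given by $\lam_{b}(*,a):=\bbB(b,Fa)=F^{\nat}(b,a)$; $\lam_{b}$ is a copresheaf by $\CQ$-functoriality of $F$, and $\inf$ exists by completeness of $\bbA$. The key identity $\bbA(a,Gb)=\bbB(Fa,b)$ would follow from the chain
\[f\leq\bbA(a,Gb)\iff f\otimes a\leq Gb\iff F(f\otimes a)\leq b\iff f\otimes Fa\leq b\iff f\leq\bbB(Fa,b),\]
using in turn the tensor--order adjunction, the underlying-order adjunction from (iii), tensor preservation from (iii), and again the tensor--order adjunction. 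Then $G$ is a $\CQ$-functor because $\bbA(Gb,Gb')=\bbB(FGb,b')\geq\bbB(b,b')$ via the counit $FGb\leq b$, the identity $\bbA(a,Gb)=\bbB(Fa,b)$ is exactly $F_{\nat}=G^{\nat}$, and Proposition~\ref{adjoint_graph} gives $F\dv G$. I expect this final implication to be the main obstacle: one must guess the right formula for $G$, after which the ``iff'' chain above elegantly bypasses a cumbersome direct computation with the explicit form of $\inf\lam_{b}$.
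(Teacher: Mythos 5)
The paper does not prove this proposition; it is quoted from Stubbe's work, so there is no in-paper argument to compare against. Judged on its own terms, your cycle (i)$\Rightarrow$(ii)$\Rightarrow$(iii)$\Rightarrow$(i) is the right strategy, and the first two implications are correct as sketched: the computation of $\bbB(F\sup\mu,b)$ and $\bbB(\sup(\mu\circ F^{\nat}),b)$ via $\bbA\lda\mu$ and the collapse $\bbB\lda\bbB=\bbB$ is sound, as are the identities $f\otimes x=\sup(f\circ\sY x)$ and $\sY x\circ F^{\nat}=\sY(Fx)$ and the observation that underlying joins are sups of presheaves of the form $\bigvee_{x\in S}\sY x$.

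There is, however, a genuine inconsistency in (iii)$\Rightarrow$(i). You define $Gb:=\inf_{\bbA}\lam_b$ with $\lam_b=F^{\nat}(b,-)$, but the second equivalence in your chain, $f\otimes a\leq Gb\iff F(f\otimes a)\leq b$, is justified by ``the underlying-order adjunction from (iii)'' --- a property of the underlying right adjoint $G_0b=\bigvee\{a\mid Fa\leq b\}$, not of the weighted limit $\inf_{\bbA}\lam_b$. Before the enriched adjunction is established these two objects are not known to coincide: unwinding $\inf$, one has $a\leq\inf_{\bbA}\lam_b$ iff $\bbB(b,Fa')\leq\bbA(a,a')$ for all $a'$, and neither direction of the equivalence with $Fa\leq b$ follows from $\CQ$-functoriality alone (the inequality $\bbA(a,a')\leq\bbB(Fa,Fa')$ points the wrong way). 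So as written the chain's second step is unjustified. The repair is immediate and actually simplifies your argument: discard $\inf_{\bbA}\lam_b$ entirely and take $G:=G_0$, the right adjoint between underlying orders supplied by (iii) (which is automatically type-preserving, since $Fa\leq b$ forces $ta=tb$). With that choice every step of the chain is licensed by exactly the facts you cite, yielding $\bbA(a,Gb)=\bbB(Fa,b)$, i.e.\ $F_{\nat}=G^{\nat}$; your verification that $G$ is a $\CQ$-functor via the counit $FGb\leq b$ and the appeal to Proposition \ref{adjoint_graph} then finish the proof. (A posteriori one recovers $Gb=\inf_{\bbA}\lam_b$, but that is a consequence of the adjunction, not an ingredient of its construction.)
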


Skeletal complete $\CQ$-categories and left adjoint $\CQ$-functors (or equivalently, sup-preserving $\CQ$-functors) constitute a 2-subcategory of $\QCat$ and we denote it by $\QCCat$. Indeed, it is easy to check that $\QCCat$ is a (large) quantaloid, in which the join of $\{F_i\}_{i\in I}\subseteq\QCCat(\bbA,\bbB)$ is the same as in $\QCat(\bbA,\bbB)$.

From the 2-isomorphism $\QCat\cong(\CQ^{\op}\text{-}\Cat)^{\co}$ in Remark \ref{Qcat_dual} it is easy to see that a left adjoint $F:\bbA\to\bbB$ in $\QCat$ corresponds bijectively to a right adjoint $F^{\op}:\bbB^{\op}\to\bbA^{\op}$ in $\CQ^{\op}\text{-}\Cat$, thus one soon obtains the following isomorphism of quantaloids:

\begin{prop} \label{QCCat_QopCCatop_iso}
Mapping a left adjoint $\CQ$-functor $F:\bbA\to\bbB$ between skeletal complete $\CQ$-categories to $G^{\op}:\bbB^{\op}\to\bbA^{\op}$ with $G$ the right adjoint of $F$ in $\QCat$ induces an isomorphism of quantaloids
$$\dv{}^{\op}:\QCCat\to(\CQ^{\op}\text{-}\CCat)^{\op}.$$
\end{prop}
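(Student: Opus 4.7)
The plan is to leverage the 2-isomorphism $(-)^{\op}\colon\QCat\cong(\CQ^{\op}\text{-}\Cat)^{\co}$ recalled in Remark \ref{Qcat_dual}. Because passing to $(-)^{\co}$ reverses the local order between $\CQ$-functors, this 2-isomorphism translates an adjunction $F\dv G$ in $\QCat$ into an adjunction $G^{\op}\dv F^{\op}$ in $\CQ^{\op}\text{-}\Cat$: the unit $1_{\bbA}\leq GF$ is sent to an inequality $G^{\op} F^{\op}\leq 1_{\bbA^{\op}}$, which is the counit of $G^{\op}\dv F^{\op}$, and dually for the counit. Combined with the fact noted in the excerpt that $\bbA$ is a complete $\CQ$-category iff $\bbA^{\op}$ is a complete $\CQ^{\op}$-category, this shows the assignment $\bbA\mapsto\bbA^{\op}$, $F\mapsto G^{\op}$ is well-defined from $\QCCat$ to $(\CQ^{\op}\text{-}\CCat)^{\op}$.

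Functoriality is a direct computation. If $F_1\colon\bbA\to\bbB$ and $F_2\colon\bbB\to\bbC$ have right adjoints $G_1, G_2$ in $\QCat$, then $F_2\circ F_1$ has right adjoint $G_1\circ G_2$, so it maps to $(G_1\circ G_2)^{\op}=G_1^{\op}\circ G_2^{\op}$ in $\CQ^{\op}\text{-}\CCat$; since composition in $(\CQ^{\op}\text{-}\CCat)^{\op}$ is reversed, this is precisely the composite of the images of $F_1$ and $F_2$. Identities are preserved trivially as $1_{\bbA}\mapsto (1_{\bbA})^{\op}=1_{\bbA^{\op}}$.

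For preservation of local joins: the join in $\QCCat(\bbA,\bbB)$ agrees with the pointwise join in $\QCat(\bbA,\bbB)$ (as noted in the excerpt). For left adjoints $F_i\dv G_i$, the right adjoint of $\bigvee_i F_i$ is the pointwise meet $\bigwedge_i G_i$, by the classical argument
$$\Big(\bv_i F_i\Big)(x)\leq y\iff\forall i:F_ix\leq y\iff\forall i:x\leq G_iy\iff x\leq\Big(\bw_i G_i\Big)(y),$$
which is well-posed because each $\bbA_X$ is order-complete by Theorem \ref{complete_tensor}. Since meets in $\bbA$ are joins in $\bbA^{\op}$, the image $(\bigwedge_i G_i)^{\op}$ equals $\bigvee_i G_i^{\op}$ in $\CQ^{\op}\text{-}\CCat(\bbB^{\op},\bbA^{\op})$, which is exactly the join in $(\CQ^{\op}\text{-}\CCat)^{\op}(\bbA^{\op},\bbB^{\op})$ (taking $(-)^{\op}$ of a quantaloid preserves local joins).

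Finally, bijectivity is obtained by symmetry: applying the same construction with $\CQ^{\op}$ in place of $\CQ$ yields a quantaloid homomorphism $\CQ^{\op}\text{-}\CCat\to(\QCCat)^{\op}$, which after taking the opposite and identifying $\CQ^{\op\op}=\CQ$ and $\bbA^{\op\op}=\bbA$ serves as a two-sided inverse of $\dv{}^{\op}$; on an arrow $H$ of $(\CQ^{\op}\text{-}\CCat)^{\op}$ with right adjoint $K$ in $\CQ^{\op}\text{-}\CCat$, this inverse returns $K^{\op}$. The main obstacle is purely notational: every step is routine provided that the two flavors of duality $(-)^{\op}$ and $(-)^{\co}$, together with the reversal of composition and the invariance of joins under $(-)^{\op}$, are tracked consistently.
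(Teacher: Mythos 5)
Your proposal is correct and follows the same route the paper takes: the paper gives no formal proof, merely observing that the 2-isomorphism $(-)^{\op}:\QCat\cong(\CQ^{\op}\text{-}\Cat)^{\co}$ of Remark \ref{Qcat_dual} turns left adjoints $F\dv G$ in $\QCat$ into right adjoints $F^{\op}$ (with left adjoint $G^{\op}$) in $\CQ^{\op}\text{-}\Cat$, whence the isomorphism of quantaloids. You have simply written out the well-definedness, functoriality, join-preservation and invertibility checks that the paper declares ``easy to see,'' and all of them are carried out correctly.
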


Each $\CQ$-distributor $\phi:\bbA\oto\bbB$ induces an \emph{Isbell adjunction} \cite{Shen2013a} $\uphi\dv\dphi$ in $\QCat$ given by
\begin{align*}
&\uphi:\PA\to\PdB,\quad\mu\mapsto\phi\lda\mu,\\
&\dphi:\PdB\to\PA,\quad\lam\mapsto\lam\rda\phi;
\end{align*}
and a \emph{Kan adjunction} $\phi^*\dv\phi_*$ defined as
\begin{align*}
&\phi^*:\PB\to\PA,\quad \lam\mapsto\lam\circ\phi,\\
&\phi_*:\PA\to\PB,\quad \mu\mapsto\mu\lda\phi.
\end{align*}
We also write down the dual Kan adjunction $\phi_{\dag}\dv\phi^{\dag}$:
\begin{align*}
&\phi_{\dag}:=((\phi^{\op})_*)^{\op}:\PdB\to\PdA,\quad\lam\mapsto\phi\rda\lam,\\
&\phi^{\dag}:=((\phi^{\op})^*)^{\op}:\PdA\to\PdB,\quad\mu\mapsto\phi\circ\mu,
\end{align*}
which corresponds to the Kan adjunction $(\phi^{\op})^*\dv(\phi^{\op})_*:\PB^{\op}\to\PA^{\op}$ in $\CQ^{\op}\text{-}\Cat$ under the isomorphism in Proposition \ref{QCCat_QopCCatop_iso}.

\begin{exmp}
The contravariant Galois connection $\uR\dv\dR$ in Example \ref{ChuConRel} is exactly an Isbell adjunction induced by $R\subseteq A\times B$ (considered as a ${\bf 2}$-distributor between discrete ${\bf 2}$-categories, i.e., sets). The induced Kan adjunction $R^*\dv R_*$ is obviously the Galois connection with the same symbol in Example \ref{SqRel}.
\end{exmp}

\begin{prop} {\rm\cite{Heymans2010}} \label{star_graph_adjoint}
$(-)^*:\QDist\to(\QCat)^{\op}$ and $(-)^{\dag}:\QDist\to(\QCat)^{\co}$ are both 2-functorial, and one has two pairs of adjoint 2-functors
$$(-)^*\dv(-)^{\nat}:(\QCat)^{\op}\to\QDist\quad\text{and}\quad(-)_{\nat}\dv(-)^{\dag}:\QDist\to(\QCat)^{\co}.$$
\end{prop}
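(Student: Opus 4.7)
\emph{Proof strategy.} The plan is to recognize both adjunctions as the universal properties of $\PA$ (free cocompletion) and $\PdA$ (free completion) exhibited by the Yoneda and co-Yoneda embeddings. First I would set the object maps $(-)^*:\bbA\mapsto\PA$ and $(-)^{\dag}:\bbA\mapsto\PdA$ and verify 2-functoriality by direct calculation: the equations $(\psi\circ\phi)^*=\phi^*\circ\psi^*$ and $\bbA^*=1_{\PA}$ follow from associativity and unitality of composition in $\QDist$, and dually for $(-)^{\dag}$. For the 2-cell orientations, $\phi\leq\phi'$ implies $\lam\circ\phi\leq\lam\circ\phi'$ pointwise, so $\phi^*\leq(\phi')^*$ in $\QCat$; since $(-)^{\op}$ reverses only 1-cells, this is the direction required in $(\QCat)^{\op}$. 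For $(-)^{\dag}$, the underlying order on $\PdB$ is the reverse of the pointwise order on copresheaves, so $\phi\leq\phi'$ yields $(\phi')^{\dag}\leq\phi^{\dag}$ in $\QCat$, i.e.\ $\phi^{\dag}\leq(\phi')^{\dag}$ in $(\QCat)^{\co}$, exactly matching the 2-cell reversal built into $\co$.

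For the first adjunction $(-)^*\dv(-)^{\nat}$, I would establish the natural order-isomorphism
$$\QCat(\bbC,\PA)\;\cong\;\QDist(\bbA,\bbC),\qquad F\mapsto\bigl((a,c)\mapsto(Fc)(a)\bigr),$$
whose inverse sends $\phi$ to $c\mapsto\phi(-,c)$; this is essentially the universal property of $\PA$ as the free cocompletion. Since $(-)^{\nat}$ is identity on objects and $\QCat(\bbC,\PA)=(\QCat)^{\op}(\PA,\bbC)$, this is precisely the hom-poset bijection demanded by $(-)^*\dv(-)^{\nat}$. Specialising to $\bbC=\PA$ and tracing $1_{\PA}$ identifies the unit as the Yoneda distributor $\sY_{\nat}:\bbA\oto\PA$; specialising to $\bbA=\bbC$ and tracing $1_{\bbC}$ identifies the counit as the Yoneda $\CQ$-functor $\sY_{\bbC}:\bbC\to\PC$. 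The triangle identity $R\ep\circ\eta R=1_R$ then reduces to $(\sY_{\bbC})^{\nat}\circ(\sY_{\bbC})_{\nat}=\bbC$, which holds by Proposition \ref{fully_faithful_graph} because $\sY$ is fully faithful (Lemma \ref{Yoneda_lemma}); the other triangle identity is the analogous Yoneda calculation $(\sY_{\nat,\bbA})^*\circ\sY_{\PA}=1_{\PA}$.

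Dually, for $(-)_{\nat}\dv(-)^{\dag}$ I would use the natural order-isomorphism
$$\QDist(\bbA,\bbB)\;\cong\;\QCat(\bbA,\PdB)^{\op},\qquad\phi\mapsto\bigl(a\mapsto\phi(a,-)\bigr),$$
where the $\op$ on the right exactly accounts for the $\co$ in the codomain $(\QCat)^{\co}$, given the reversed order on $\PdB$. The unit is the co-Yoneda functor $\sYd:\bbB\to\PdB$ and the counit is the distributor $(\sYd)^{\nat}:\PdB\oto\bbB$, which by co-Yoneda (Lemma \ref{Yoneda_lemma}) is the evaluation $(\lam,b)\mapsto\lam(b)$; the triangle identities again follow from fully faithfulness of $\sYd$ via Proposition \ref{fully_faithful_graph}. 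As a shortcut, this second adjunction can also be deduced from the first by applying the dualities $(-)^{\op}:\QCat\cong(\CQ^{\op}\text{-}\Cat)^{\co}$ and $(-)^{\op}:\QDist\cong(\CQ^{\op}\text{-}\Dist)^{\op}$ of Remark \ref{Qcat_dual}, which intertwine $(-)^*$ with $(-)^{\dag}$ and $(-)^{\nat}$ with $(-)_{\nat}$ and swap presheaves with copresheaves. The main obstacle throughout is bookkeeping: keeping the $\op$ and $\co$ conventions aligned so that 2-cell directions match on both sides of each hom-isomorphism; once that is done, naturality in all variables and the triangle identities are routine Yoneda manipulations.
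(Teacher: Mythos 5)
The paper offers no proof of this proposition: it is quoted from Heymans's thesis \cite{Heymans2010} (see also Stubbe's papers) and used as a black box, so there is nothing internal to compare against. Your argument is the standard one and is correct in all essentials: the object assignments $\bbA\mapsto\PA$, $\bbA\mapsto\PdA$, the contravariance of $(-)^*$ versus the 2-cell reversal of $(-)^{\dag}$ (correctly traced to the reversed underlying order on $\PdB$), the hom-isomorphisms $\QCat(\bbC,\PA)\cong\QDist(\bbA,\bbC)$ and $(\QCat)^{\co}(\bbA,\PdB)\cong\QDist(\bbA,\bbB)$ realized by the transposes $\tphi$ and $\hphi$ of the paper, the identification of unit/counit with $\sY_{\nat}$, $\sY$ and $\sYd$, $(\sYd)^{\nat}$, and the reduction of one triangle identity in each adjunction to $(\sY)^{\nat}\circ(\sY)_{\nat}=\bbC$ via Proposition \ref{fully_faithful_graph} all check out, as does the shortcut deriving the second adjunction from the first via the dualities of Remark \ref{Qcat_dual} (note $(F_{\nat})^{\op}=(F^{\op})^{\nat}$ and $\phi^{\dag}=((\phi^{\op})^*)^{\op}$ by definition). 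The only mild imprecision is asserting that \emph{both} triangle identities for $(-)_{\nat}\dv(-)^{\dag}$ follow from full faithfulness of $\sYd$; one of them is instead the co-Yoneda computation analogous to $(\sY_{\nat})^*\circ\sY_{\PA}=1_{\PA}$, but you handle exactly this point correctly in the first adjunction, so this is cosmetic rather than a gap.
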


The adjunctions $(-)^*\dv(-)^{\nat}$ and $(-)_{\nat}\dv(-)^{\dag}$ give rise to isomorphisms
$$(\QCat)^{\co}(\bbA,\PdB)\cong\QDist(\bbA,\bbB)\cong\QCat(\bbB,\PA)$$
for all $\CQ$-categories $\bbA$, $\bbB$. We denote by
\begin{align}
&\tphi:\bbB\to\PA,\quad\tphi y=\phi(-,y),\label{tphi_def}\\
&\hphi:\bbA\to\PdB,\quad\hphi x=\phi(x,-) \label{hphi_def}
\end{align}
for the transposes of each $\CQ$-distributor $\phi:\bbA\oto\bbB$.

\begin{prop} \label{tphi_hphi_Yoneda} {\rm\cite{Shen2014,Shen2013a}}
Let $\phi:\bbA\oto\bbB$ be a $\CQ$-distributor. Then for all $x\in\bbA_0$, $y\in\bbB_0$,
$$\hphi x=\phi(x,-)=\uphi\sY_{\bbA}x=\phi^{\dag}\sYd_{\bbA}x\quad\text{and}\quad\tphi y=\phi(-,y)=\dphi\sYd_{\bbB}y=\phi^*\sY_{\bbB}y.$$
\end{prop}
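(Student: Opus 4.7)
The plan is to unfold the definitions of $\uphi,\dphi,\phi^*,\phi^{\dag}$ and of the Yoneda and co-Yoneda embeddings, and verify each of the four claimed equalities by a pointwise computation that uses only the $\CQ$-distributor inequalities for $\phi$ together with the basic reflexivity $\bbA(x,x)\geq 1_{tx}$ in hom-objects.

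First, the identity $\hphi x=\phi(x,-)$ is merely the defining equation (\ref{hphi_def}). For $\phi(x,-)=\phi^{\dag}\sYd_{\bbA}x$, I unfold: $\sYd_{\bbA}x=\bbA(x,-)$ is a copresheaf on $\bbA$ of type $tx$, and $\phi^{\dag}\lam=\phi\circ\lam$, so for each $y\in\bbB_0$,
\[
(\phi^{\dag}\sYd_{\bbA}x)(y)=\bv_{a\in\bbA_0}\phi(a,y)\circ\bbA(x,a).
\]
The $\CQ$-distributor inequality $\phi(a,y)\circ\bbA(x,a)\leq\phi(x,y)$ gives $\leq$, and choosing $a=x$ together with $\bbA(x,x)\geq 1_{tx}$ gives $\geq$, so this supremum equals $\phi(x,y)$.

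Next, for $\phi(x,-)=\uphi\sY_{\bbA}x$, I unfold $\sY_{\bbA}x=\bbA(-,x)\in\PA$ and $\uphi\mu=\phi\lda\mu$, obtaining
\[
(\uphi\sY_{\bbA}x)(y)=\bw_{a\in\bbA_0}\phi(a,y)\lda\bbA(a,x).
\]
For $\leq$, take $a=x$: $\phi(x,y)\lda\bbA(x,x)\leq\phi(x,y)\lda 1_{tx}=\phi(x,y)$. For $\geq$, rewrite the distributor inequality $\phi(x,y)\circ\bbA(a,x)\leq\phi(a,y)$ via (\ref{composition_implication_adjoint}) as $\phi(x,y)\leq\phi(a,y)\lda\bbA(a,x)$, and take the meet over $a$. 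The two chains pin the expression at $\phi(x,y)$.

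The remaining two equalities $\tphi y=\phi(-,y)=\dphi\sYd_{\bbB}y=\phi^*\sY_{\bbB}y$ are exactly the first three, read through the isomorphism $(-)^{\op}:\QDist\cong(\CQ^{\op}\text{-}\Dist)^{\op}$ of Remark \ref{Qcat_dual}, under which $\phi^{\dag}$ corresponds to $(\phi^{\op})^*$, $\uphi$ corresponds to $(\phi^{\op})^{\da}$, and the Yoneda embedding of $\bbB$ corresponds to the co-Yoneda embedding of $\bbB^{\op}$. Hence they may be stated as the same two identities for the $\CQ^{\op}$-distributor $\phi^{\op}$, with $\bbB$ in the role of $\bbA$. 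Alternatively, one may simply redo the two short pointwise computations on the $\bbB$-side; neither step is an obstacle. The only nontrivial point is being careful with types — checking that each $\CQ$-arrow under consideration lives in the hom-object $\CQ(tx,ty)$ where the equality is being asserted — but once the types are tracked the verification is just the Yoneda trick applied in each of the four variations.
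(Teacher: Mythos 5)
Your verification is correct: each of the four identities reduces, after unfolding the definitions of $\uphi$, $\dphi$, $\phi^*$, $\phi^{\dag}$ and of $\sY$, $\sYd$, to a pointwise join or meet that is pinned to $\phi(x,y)$ by the two distributor inequalities together with $1_{tx}\leq\bbA(x,x)$ (resp.\ $1_{ty}\leq\bbB(y,y)$), and your reduction of the $\bbB$-side identities to the $\bbA$-side ones via $(-)^{\op}$ is consistent with the definitions $\phi^{\dag}=((\phi^{\op})^*)^{\op}$ and $\dphi\lam=\lam\rda\phi$. The paper itself gives no proof --- the proposition is imported by citation from \cite{Shen2014,Shen2013a} --- so there is nothing in-text to compare against, but your argument is exactly the standard enriched Yoneda computation one would expect there.
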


Since the ``$\CQ$-natural transformation'' between $\CQ$-functors is simply given by the local order in $\QCat$, a \emph{$\CQ$-monad} on a $\CQ$-category $\bbA$ is exactly a $\CQ$-functor $F:\bbA\to\bbA$ with $1_{\bbA}\leq F$ and $F^2\cong F$. A \emph{$\CQ$-comonad} on $\bbA$ may be defined through a $\CQ^{\op}$-monad on $\bbA^{\op}$.\footnote{$\CQ$-monads (resp. $\CQ$-comonads) are referred to as $\CQ$-closure operators (resp. $\CQ$-interior operators) in \cite{Shen2014,Shen2013a}, due to their similarities to corresponding terminologies in topology.}

Each adjunction $F\dv G:\bbB\to\bbA$ in $\QCat$ gives rise to a $\CQ$-monad $GF$ on $\bbA$ and a $\CQ$-comonad $FG$ on $\bbB$. In particular, for each $\CQ$-distributor $\phi:\bbA\oto\bbB$, $\dphi\uphi$ (resp. $\uphi\dphi$) is an idempotent $\CQ$-monad (resp. $\CQ$-comonad) on $\PA$ (resp. $\PdA$) since $\PA$ (resp. $\PdA$) is a skeletal $\CQ$-category.


\begin{prop} \label{monad_reflective} {\rm\cite{Shen2013a}}
Suppose  $F:\bbA\to\bbA$ is a $\CQ$-monad (resp. $\CQ$-comonad) on a skeletal $\CQ$-category $\bbA$. Let
$$\Fix(F):=\{x\in\bbA_0\mid Fx=x\}=\{Fx\mid x\in\bbA_0\}$$
be the $\CQ$-subcategory of $\bbA$ consisting of the fixed points of $F$. Then
\begin{itemize}
\item[\rm (1)] the inclusion $\CQ$-functor $\Fix(F)\ \to/^(->/\bbA$ is right (resp. left) adjoint to the codomain restriction $F:\bbA\to\Fix(F)$;
\item[\rm (2)] $\Fix(F)$ is a complete $\CQ$-category provided so is $\bbA$.
\end{itemize}
\end{prop}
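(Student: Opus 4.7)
The plan is to handle (1) by verifying the adjunction via the graph characterization of Proposition \ref{adjoint_graph}, to obtain the comonad case by formal duality through Remark \ref{Qcat_dual}, and to derive (2) from the reflection structure thereby established.

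For the monad case of (1), first note that $F^2\cong F$ together with the skeletality of $\bbA$ forces $F^2=F$, so every $Fx$ is a fixed point of $F$; this gives the set equality $\{x\in\bbA_0\mid Fx=x\}=\{Fx\mid x\in\bbA_0\}$ and shows that $F$ codomain-restricts to a $\CQ$-functor $F:\bbA\to\Fix(F)$. Writing $i:\Fix(F)\hookrightarrow\bbA$ for the inclusion, Proposition \ref{adjoint_graph} reduces the adjunction $F\dv i$ to the pointwise identity $\bbA(Fx,y)=\bbA(x,y)$ for $x\in\bbA_0$ and $y\in\Fix(F)_0$: one inequality follows from $1_{\bbA}\leq F$ (which yields $1_{tx}\leq\bbA(x,Fx)$ and hence $\bbA(Fx,y)\leq\bbA(Fx,y)\circ\bbA(x,Fx)\leq\bbA(x,y)$), and the reverse follows from $\CQ$-functoriality of $F$ together with $Fy=y$ (so that $\bbA(x,y)\leq\bbA(Fx,Fy)=\bbA(Fx,y)$). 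For the comonad case, apply the 2-isomorphism $(-)^{\op}:\QCat\cong(\CQ^{\op}\text{-}\Cat)^{\co}$ from Remark \ref{Qcat_dual}: a $\CQ$-comonad on $\bbA$ transports to a $\CQ^{\op}$-monad on $\bbA^{\op}$ with the same set of fixed points, and the monad case provides $F^{\op}\dv i^{\op}$ in $\CQ^{\op}\text{-}\Cat$, which pulls back to $i\dv F$ in $\QCat$.

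For (2) I treat the monad case (the comonad case dualizes, completeness being self-dual under $(-)^{\op}$). Given $\mu\in\CP\Fix(F)$ of type $X$, the presheaf $\mu\circ i^{\nat}\in\PA$ is available, and I set
$$\sup\nolimits_{\Fix(F)}\mu:=F\bigl(\sup\nolimits_{\bbA}(\mu\circ i^{\nat})\bigr)$$
as a candidate left adjoint to $\sY_{\Fix(F)}$. The verification that $\Fix(F)(\sup_{\Fix(F)}\mu,y)=(\Fix(F)\lda\mu)(-,y)$ for $y\in\Fix(F)_0$ proceeds in two steps: the identity $\bbA(Fx,y)=\bbA(x,y)$ from part (1) eliminates the outer $F$, and the sup--Yoneda adjunction in $\bbA$ combined with standard manipulations of right implications in $\QDist$ collapses the resulting expression (via the triangle inequality for hom-arrows) to $\Fix(F)\lda\mu$. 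The main obstacle is the interchange of joins and right implications in this last step, but the essential input --- the identity $F_{\nat}=i^{\nat}$ --- transfers the calculation entirely into the ambient complete $\CQ$-category $\bbA$, where it is routine.
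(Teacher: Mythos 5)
Your argument is correct. Note first that the paper itself offers no proof of this proposition: it is quoted from \cite{Shen2013a}, so there is no in-paper argument to compare against. Your proof stands on its own and is consistent with what the paper does record, namely the supremum formula ${\sup}_{\Fix(F)}\mu=F{\sup}_{\bbA}(\mu\circ I^{\nat})$ stated immediately after the proposition, which is exactly your candidate in part (2). The reduction of the adjunction $F\dv i$ to the pointwise identity $\bbA(Fx,y)=\bbA(x,y)$ via $F_{\nat}=i^{\nat}$ is the standard route, both inequalities are justified correctly, and the transport of the comonad case through $(-)^{\op}:\QCat\cong(\CQ^{\op}\text{-}\Cat)^{\co}$ (which reverses adjunctions) is sound. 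The only place you wave your hands is the final collapse in part (2); for the record it goes cleanly as follows: by part (1), $\Fix(F)\big({\sup}_{\Fix(F)}\mu,y\big)=\bbA\big({\sup}_{\bbA}(\mu\circ i^{\nat}),y\big)=\big(\bbA\lda(\mu\circ i^{\nat})\big)(y)$ for $y\in\Fix(F)_0$, and since $i_{\nat}\dv i^{\nat}$ one has $\bbA\lda(\mu\circ i^{\nat})=(\bbA\lda i^{\nat})\lda\mu=i_{\nat}\lda\mu$ by Proposition \ref{adjoint_arrow_calculation}(1), whose value at $y$ is $\displaystyle\bw_{z}\bbA(z,y)\lda\mu(z)=(\Fix(F)\lda\mu)(y)$ because $i$ is fully faithful; no separate appeal to the triangle inequality is needed.
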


If $\bbB=\Fix(F)$ for a $\CQ$-monad $F:\bbA\to\bbA$, suprema in $\bbB$ are given by ${\sup}_{\bbB}\mu=F{\sup}_{\bbA}(\mu\circ I^{\nat})$ for all $\mu\in\PB$. In particular, for all $x,x_i\in\bbB_0$ $(i\in I)$, $f\in\CP(tx)$,
\begin{equation} \label{tensor_closure_system}
f\otimes_{\bbB}x=F(f\otimes_{\bbA}x),\quad\bigsqcup_{i\in I}x_i=F\Big(\bv_{i\in I}x_i\Big),
\end{equation}
where $\bigsqcup$ and $\bv$ respectively denote the underlying joins in $\bbB$ and $\bbA$.


\subsection{Chu connections in the quantaloid $\QDist$}

Given $\CQ$-distributors $\phi:\bbA\oto\bbB$ and $\psi:\bbA'\oto\bbB'$, a \emph{Chu connection} from $\phi$ to $\psi$ is, by definition, a pair $(\zeta:\bbA\oto\bbA',\ \eta:\bbB\oto\bbB')$ of $\CQ$-distributors with
$$\phi\lda\zeta=\eta\rda\psi.$$
$$\bfig
\square<1000,500>[\bbA`\bbA'`\bbB`\bbB';\zeta`\phi`\psi`\eta]
\morphism(1000,500)|r|/-->/<-1000,-500>[\bbA'`\bbB;\ \phi\lda\zeta=\eta\rda\psi]
\place(0,250)[\circ] \place(500,250)[\circ] \place(1000,250)[\circ] \place(500,0)[\circ] \place(500,500)[\circ]
\efig$$

Chu connections between $\CQ$-distributors are natural extensions of \emph{Chu transforms}. A Chu transform (called infomorphism in \cite{Shen2014,Shen2013a}) $$(F,G):(\phi:\bbA\oto\bbB)\to(\psi:\bbA'\oto\bbB')$$
is a pair of $\CQ$-functors $F:\bbA\to\bbA'$ and $G:\bbB'\to\bbB$ such that $\psi(F-,-)=\phi(-,G-)$.

\begin{prop} \label{QCat_Chu_transform}
Given a pair of $\CQ$-functors $F:\bbA\to\bbA'$ and $G:\bbB'\to\bbB$, the following statements are equivalent:
\begin{itemize}
\item[\rm (i)] $(F,G):\phi\to\psi$ is a Chu transform.
\item[\rm (ii)] $\psi\circ F_{\nat}=G^{\nat}\circ\phi$; that is, $(F_{\nat},G^{\nat}):\phi\to\psi$ is an arrow in the quantaloid $\Arr(\QDist)$.
\item[\rm (iii)] $\psi\lda F^{\nat}=G_{\nat}\rda\phi$; that is, $(F^{\nat},G_{\nat}):\psi\to\phi$ is a Chu connection.
\item[\rm (iv)] $\upsi(F^{\nat})^*=(G^{\nat})^{\dag}\uphi$.
\item[\rm (v)] $(F_{\nat})^*\psi^*=\phi^*(G^{\nat})^*$.
$$\bfig
\square(-300,0)<1000,500>[\bbA`\bbA'`\bbB`\bbB';F_{\nat}`\phi`\psi`G^{\nat}]
\morphism(-300,500)|b|/-->/<1000,-500>[\bbA`\bbB';\psi\circ F_{\nat}=G^{\nat}\circ\phi]
\place(200,0)[\circ] \place(200,500)[\circ] \place(-300,250)[\circ] \place(200,250)[\circ] \place(700,250)[\circ]
\square(1500,0)<1000,500>[\bbA'`\bbA`\bbB'`\bbB;F^{\nat}`\psi`\phi`G_{\nat}]
\morphism(2500,500)|b|/-->/<-1000,-500>[\bbA`\bbB';\psi\lda F^{\nat}=G_{\nat}\rda\phi]
\place(2000,0)[\circ] \place(2000,500)[\circ] \place(1500,250)[\circ]
\place(2500,250)[\circ] \place(2000,250)[\circ]
\square(0,-800)<700,500>[\PA`\PA'`\PdB`\PdB';(F^{\nat})^*`\uphi`\upsi`(G^{\nat})^{\dag}]
\square(1500,-800)<700,500>[\PB'`\PB`\PA'`\PA;(G^{\nat})^*`\psi^*`\phi^*`(F_{\nat})^*]
\efig$$
\end{itemize}
\end{prop}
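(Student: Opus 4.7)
The plan is to organize the equivalences as a small cycle \textbf{(i)$\Leftrightarrow$(ii)$\Leftrightarrow$(iii)} together with two side equivalences \textbf{(ii)$\Leftrightarrow$(v)} and \textbf{(iii)$\Leftrightarrow$(iv)}, since each of (iv), (v) is just (iii), (ii) after applying one of the 2-functors $(-)^*$, $(-)^{\dag}$ from Proposition \ref{star_graph_adjoint} to $\PA$ or $\PB'$. Throughout, I will repeatedly use the two adjunctions $F_{\nat}\dv F^{\nat}$ and $G_{\nat}\dv G^{\nat}$ in $\QDist$ (Proposition \ref{adjoint_graph}) together with the implication identities of Proposition \ref{adjoint_arrow_calculation}.

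First, for (i)$\Leftrightarrow$(ii), I would just unpack the composition formula for $\CQ$-distributors: for $x\in\bbA_0$ and $y'\in\bbB'_0$,
\[ (\psi\circ F_{\nat})(x,y')=\bv_{x'\in\bbA'_0}\psi(x',y')\circ\bbA'(Fx,x')=\psi(Fx,y'), \]
and dually $(G^{\nat}\circ\phi)(x,y')=\phi(x,Gy')$, where the collapsing joins are handled by the $\CQ$-category axioms applied to $\bbA'$ and $\bbB'$. Thus the identity $\psi\circ F_{\nat}=G^{\nat}\circ\phi$ is literally the Chu transform condition $\psi(F-,-)=\phi(-,G-)$.

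Next, for (ii)$\Leftrightarrow$(iii), Proposition \ref{adjoint_arrow_calculation}(1) applied in $\QDist$ gives $\psi\circ F_{\nat}=\psi\lda F^{\nat}$ (from $F_{\nat}\dv F^{\nat}$) and $G^{\nat}\circ\phi=G_{\nat}\rda\phi$ (from $G_{\nat}\dv G^{\nat}$), so the two identities are the same.

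For (iii)$\Leftrightarrow$(iv), I evaluate both sides at $\mu\in\PA$, obtaining on one side $\upsi(\mu\circ F^{\nat})=\psi\lda(\mu\circ F^{\nat})=(\psi\lda F^{\nat})\lda\mu$ and on the other $(G^{\nat})^{\dag}(\phi\lda\mu)=G^{\nat}\circ(\phi\lda\mu)=(G_{\nat}\rda\phi)\lda\mu$, where the last equality uses Proposition \ref{adjoint_arrow_calculation}(3) with $G_{\nat}\dv G^{\nat}$. So (iv) is the assertion
\[ (\psi\lda F^{\nat})\lda\mu=(G_{\nat}\rda\phi)\lda\mu\quad\text{for all}\ \mu\in\PA. \]
Specializing $\mu$ to the identity presheaves (equivalently, using the Yoneda Lemma \ref{Yoneda_lemma} to test against $\sY_{\bbA}x$) forces $\psi\lda F^{\nat}=G_{\nat}\rda\phi$, which is (iii); the converse is immediate. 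The argument for (ii)$\Leftrightarrow$(v) is completely analogous: evaluating at $\lam\in\PB'$ rewrites both sides as $\lam\circ(\psi\circ F_{\nat})$ and $\lam\circ(G^{\nat}\circ\phi)$, and the equivalence with (ii) again follows by varying $\lam$ (or specializing to identities).

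None of the steps presents a real obstacle; the only thing to be careful about is the direction of composition in the formulas for $(-)^*$ and $(-)^{\dag}$ and the correct choice of adjunction to invoke ($F_{\nat}\dv F^{\nat}$ versus $G_{\nat}\dv G^{\nat}$), since the asymmetry of Chu transforms (covariant in $F$, contravariant in $G$) is what makes (ii) and (iii) switch the roles of $F$ and $G$ between graphs and cographs. The proof is essentially a symbolic calculation in the quantaloid $\QDist$ using Propositions \ref{adjoint_arrow_calculation}, \ref{adjoint_graph}, and \ref{star_graph_adjoint}.
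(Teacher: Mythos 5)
Your proposal is correct and follows essentially the same route as the paper: (i)$\iff$(ii) by unwinding composition, (ii)$\iff$(iii) via Proposition \ref{adjoint_arrow_calculation}(1), and the passage to (iv) and (v) by evaluating on presheaves and recovering the distributor identities by testing against the representables $\sY_{\bbA}x$ and $\sY_{\bbB'}y'$ (Yoneda / Proposition \ref{tphi_hphi_Yoneda}). The only cosmetic difference is that you link (iv) directly to (iii) rather than routing (ii)$\Rightarrow$(iv)$\Rightarrow$(iii) as the paper does, which is immaterial once (ii)$\iff$(iii) is in place.
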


\begin{proof}
(i)$\iff$(ii): Straightforward calculation by the definition of composite $\CQ$-distributors.

(ii)$\iff$(iii): Follows immediately from Proposition \ref{adjoint_arrow_calculation}(1).

(ii)${}\Lra{}$(iv): For each $\mu\in\PA$,
\begin{align*}
\upsi(F^{\nat})^*\mu&=\psi\lda(\mu\circ F^{\nat})\\
&=(\psi\circ F_{\nat})\lda\mu&(\text{Proposition \ref{adjoint_arrow_calculation}(2)})\\
&=(G^{\nat}\circ\phi)\lda\mu\\
&=G^{\nat}\circ(\phi\lda\mu)&(\text{Proposition \ref{adjoint_arrow_calculation}(3)})\\
&=(G^{\nat})^{\dag}\uphi\mu.
\end{align*}

(iv)${}\Lra{}$(iii): For each $x\in\bbA_0$,
\begin{align*}
(\psi\lda F^{\nat})(x,-)&=\psi\lda F^{\nat}(-,x)\\
&=\upsi(F^{\nat})^*\sY_{\bbA}x&\text{(Proposition \ref{tphi_hphi_Yoneda})}\\
&=(G^{\nat})^{\dag}\uphi\sY_{\bbA}x\\
&=G^{\nat}\circ\phi(x,-)&\text{(Proposition \ref{tphi_hphi_Yoneda})}\\
&=(G_{\nat}\rda\phi)(x,-).&\text{(Proposition \ref{adjoint_arrow_calculation}(1))}
\end{align*}

(ii)${}\Lra{}$(v): Follows immediately from the functoriality of $(-)^*:\QDist\to(\QCat)^{\op}$.

(v)${}\Lra{}$(ii): For each $y'\in\bbB'_0$,
$$(\psi\circ F_{\nat})(-,y')=(F_{\nat})^*\psi^*\sY_{\bbB'}y'=\phi^*(G^{\nat})^*\sY_{\bbB'}y'=(G^{\nat}\circ\phi)(-,y'),$$
where the first and the last equalities follow from Proposition \ref{tphi_hphi_Yoneda}.
\end{proof}

Chu transforms between $\CQ$-distributors may be ordered as
$$(F,G)\leq(F',G'):\phi\to\psi\iff F\leq F'\ \text{and}\ G\leq G'.$$
In this way $\QChu$ becomes a locally ordered 2-category (denoted by $\QInfo$ in \cite{Shen2014,Shen2013a}).

Proposition \ref{QCat_Chu_transform} gives rise to functors\footnote{Here ``$\ChuCon(\QDist)^{\op}$'' should read ``the opposite of the quantaloid $\ChuCon(\QDist)$''.}
\begin{align*}
&(\Box_{\nat},\Box^{\nat}):\QChu\to\Arr(\QDist),\quad(F,G)\mapsto(F_{\nat},G^{\nat}),\\
&(\Box^{\nat},\Box_{\nat}):\QChu\to\ChuCon(\QDist)^{\op},\quad(F,G)\mapsto(F^{\nat},G_{\nat}),
\end{align*}
which are both identities on objects, but neither of them is full, faithful or 2-functorial.

It is interesting that Chu connections can be characterized as Chu transforms:

\begin{prop} \label{Chu_con_condition}
Let $\phi:\bbA\oto\bbB$, $\psi:\bbA'\oto\bbB'$, $\zeta:\bbA\oto\bbA'$ and $\eta:\bbB\oto\bbB'$ be $\CQ$-distributors. The following statements are equivalent:
\begin{itemize}
\item[\rm (i)] $(\zeta, \eta):\phi\to\psi$ is a Chu connection.
\item[\rm (ii)] $\uphi\zeta^*=\eta_{\dag}\upsi$.
\item[\rm (iii)] $(\zeta^*, \eta^\dag):(\upsi)_{\nat}\to(\uphi)_{\nat}$ is a Chu transform.
\end{itemize}
$$\bfig
\square<700,500>[\PA'`\PA`\PdB'`\PdB;\zeta^*`\upsi`\uphi`\eta_{\dag}]
\square(1500,0)/->`->`->`<-/<700,500>[\PA'`\PA`\PdB'`\PdB;\zeta^*`(\upsi)_{\nat}`(\uphi)_{\nat}`\eta^\dag]
\place(1500,250)[\circ] \place(2200,250)[\circ]
\efig$$
Therefore, there is a functor
$$(\Box^*,\Box^{\dag}):\ChuCon(\QDist)^{\op}\to\QChu$$
that sends a Chu connection $(\zeta,\eta):\phi\to\psi$ to the Chu transform $(\zeta^*, \eta^\dag):(\upsi)_{\nat}\to(\uphi)_{\nat}$.
\end{prop}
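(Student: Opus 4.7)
My overall plan is to prove the three equivalences by direct unfolding, relying on the $\lda$--$\circ$--$\rda$ adjunction (\ref{composition_implication_adjoint}) and its routine consequences, together with the Yoneda lemma (Lemma \ref{Yoneda_lemma}) and the pointwise identifications in Proposition \ref{tphi_hphi_Yoneda}. Functoriality of $(\Box^*,\Box^{\dag})$ is then a short bookkeeping check, since well-definedness on morphisms will already be given by (i) $\iff$ (iii). I do not expect any serious obstacle; the step that requires the most care is (iii) $\Rightarrow$ (ii), where one has to extract a pointwise identity of copresheaves out of an equality of graph distributors, and this is exactly the place where Yoneda does the work.

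For (i) $\iff$ (ii), I would evaluate the proposed identity $\uphi\zeta^*=\eta_{\dag}\upsi$ at an arbitrary $\mu\in\PA'$: unfolding gives $\uphi\zeta^*\mu=\phi\lda(\mu\circ\zeta)$ and $\eta_{\dag}\upsi\mu=\eta\rda(\psi\lda\mu)$, and the basic quantaloid identities $\phi\lda(\mu\circ\zeta)=(\phi\lda\zeta)\lda\mu$ and $\eta\rda(\psi\lda\mu)=(\eta\rda\psi)\lda\mu$ (both immediate from (\ref{composition_implication_adjoint})) rewrite these as $(\phi\lda\zeta)\lda\mu$ and $(\eta\rda\psi)\lda\mu$; hence (i) implies (ii) at once. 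For the converse I plan to specialize (ii) to $\mu=\sY_{\bbA'}x'$ for each $x'\in\bbA'_0$: a short distributor-axiom computation yields $\sY_{\bbA'}x'\circ\zeta=\zeta(-,x')$, while Proposition \ref{tphi_hphi_Yoneda} gives $\upsi\sY_{\bbA'}x'=\psi(x',-)$, so the resulting equation of copresheaves on $\bbB$ is exactly (i) read row-by-row.

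For (ii) $\iff$ (iii), I would unfold the Chu-transform condition $(\uphi)_{\nat}(\zeta^*-,-)=(\upsi)_{\nat}(-,\eta^{\dag}-)$ via $F_{\nat}(x,y)=\bbB(Fx,y)$ and $\PdB(\lam_1,\lam_2)=\lam_2\rda\lam_1$ to the equation
\[
\lam\rda\uphi\zeta^*\mu=(\eta\circ\lam)\rda\upsi\mu\quad\text{for all } \mu\in\PA',\ \lam\in\PdB.
\]
The identity $(a\circ b)\rda c=b\rda(a\rda c)$ (again a direct consequence of (\ref{composition_implication_adjoint})) converts the right-hand side into $\lam\rda\eta_{\dag}\upsi\mu$, so (iii) reads ``$\lam\rda A=\lam\rda B$ for every $\lam\in\PdB$'' with $A=\uphi\zeta^*\mu$ and $B=\eta_{\dag}\upsi\mu$. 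Setting $\lam=\sYd_{\bbB}y$ and invoking the Yoneda identity $\lam(y)=\sYd_{\bbB}y\rda\lam$ collapses this to the pointwise equality $A(y)=B(y)$, which is (ii); the converse implication is immediate.

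Finally, to verify functoriality I would check that $\bbA^*=1_{\PA}$ and $\bbB^{\dag}=1_{\PdB}$ (so the identity Chu connection $(\bbA,\bbB)$ on $\phi$ is sent to the identity Chu transform on $(\uphi)_{\nat}$) and that $(\zeta'\circ\zeta)^*=\zeta^*\circ(\zeta')^*$ and $(\eta'\circ\eta)^{\dag}=(\eta')^{\dag}\circ\eta^{\dag}$; these match the Chu-transform composition rule $(F',G')\circ(F,G)=(F'\circ F,G\circ G')$ once one accounts for the order reversal imposed by $\ChuCon(\QDist)^{\op}$ on the first (``$F$-side'') components.
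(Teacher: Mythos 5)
Your proposal is correct and follows essentially the same route as the paper: both arguments reduce everything to the adjunction identities from (\ref{composition_implication_adjoint}), the Yoneda lemma, and the pointwise identifications of Proposition \ref{tphi_hphi_Yoneda}. The only cosmetic difference is that the paper proves the cycle (i)$\Rightarrow$(ii)$\Rightarrow$(iii)$\Rightarrow$(i) and handles (ii)$\Rightarrow$(iii) via the 2-functoriality of $(-)_{\nat}$ together with Proposition \ref{adjoint_graph}, whereas you prove the two biconditionals directly by unfolding the Chu-transform condition pointwise; both are sound.
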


\begin{proof}
(i)${}\Lra{}$(ii): For all $\mu'\in\PA'$,
\begin{align*}
\uphi\zeta^*\mu'&=\phi\lda(\mu'\circ\zeta)\\
&=(\phi\lda\zeta)\lda\mu'\\
&=(\eta\rda\psi)\lda\mu'&((\zeta, \eta)\ \text{is a Chu connection})\\
&=\eta\rda(\psi\lda\mu')\\
&=\eta_{\dag}\upsi\mu'.
\end{align*}

(ii)${}\Lra{}$(iii): Since $\uphi\zeta^*=\eta_{\dag}\upsi$, the functoriality of $(-)_{\nat}:(\QCat)^{\co}\to\QDist$ leads to
$$(\uphi)_{\nat}\circ(\zeta^*)_{\nat}=(\uphi\zeta^*)_{\nat}=(\eta_{\dag}\upsi)_{\nat}=(\eta_{\dag})_{\nat}\circ(\upsi)_{\nat}=(\eta^{\dag})^{\nat}\circ(\upsi)_{\nat}.$$
Here the last equality follows from Proposition \ref{adjoint_graph} and the dual Kan adjunction $\eta_{\dag}\dv\eta^{\dag}$.

(iii)${}\Lra{}$(i): $(\zeta^*, \eta^\dag):(\upsi)_{\nat}\to(\uphi)_{\nat}$ being a Chu transform implies
\begin{equation} \label{zeta_star_eta_dag}
\PdB(\uphi\zeta^*-,-)=(\uphi)_{\nat}(\zeta^*-,-)=(\upsi)_{\nat}(-,\eta^\dag-)=\PdB'(\upsi-,\eta^\dag-).
\end{equation}
Thus for all $x'\in\bbA'_0$ and $y\in\bbB_0$,
\begin{align*}
(\phi\lda\zeta)(x',y)&=\phi(-,y)\lda\zeta(-,x')\\
&=(\dphi\sYd_{\bbB}y)\lda(\zeta^*\sY_{\bbA'}x')&(\text{Proposition \ref{tphi_hphi_Yoneda}})\\
&=\PA(\zeta^*\sY_{\bbA'}x',\dphi\sYd_{\bbB}y)\\
&=\PdB(\uphi\zeta^*\sY_{\bbA'}x',\sYd_{\bbB}y)&(\uphi\dv\dphi)\\
&=\PdB'(\upsi\sY_{\bbA'}x',\eta^\dag\sYd_{\bbB}y)&\text{(Equation (\ref{zeta_star_eta_dag}))}\\
&=(\eta^\dag\sYd_{\bbB}y)\rda(\upsi\sY_{\bbA'}x')\\
&=\eta(y,-)\rda\psi(x',-)&(\text{Proposition \ref{tphi_hphi_Yoneda}})\\
&=(\eta\rda\psi)(x',y).
\end{align*}
\end{proof}


\subsection{The quantaloids $\ChuCon(\QDist)$, $\BB(\QDist)$ and $\QCCat$}

Each $\CQ$-distributor $\phi:\bbA\oto\bbB$ induces an Isbell adjunction $\uphi\dv\dphi$. It follows from Proposition \ref{monad_reflective} that
$$\CM\phi:=\Fix(\dphi\uphi),$$
the $\CQ$-subcategory of fixed points of the $\CQ$-monad $\dphi\uphi:\PA\to\PA$, is a complete $\CQ$-category. It is known \cite{Shen2013a} that the assignment $\phi\mapsto\CM\phi$ is an extension of the Dedekind-MacNeille completion of partially ordered sets and it is functorial from the category $\QChu$ to $\QCCat$, sending a Chu transform
$$(F,G):(\phi:\bbA\oto\bbB)\to(\psi:\bbA'\oto\bbB')$$
to the left adjoint $\CQ$-functor given by the composite
$$\CM(F,G)=(\CM\phi\ \to/^(->/\PA\to^{(F^{\nat})^*}\PA'\to^{\dpsi\upsi}\CM\psi).$$

The following proposition shows that the assignment $\phi\mapsto\CM\phi$ generates a contravariant functor
$$\CM:\ChuCon(\QDist)^{\op}\to\QCCat$$
that maps a Chu connection
$$(\zeta,\eta):(\phi:\bbA\oto\bbB)\to (\psi:\bbA'\oto\bbB')$$
to the left adjoint $\CQ$-functor
$$\CM(\zeta,\eta):=(\CM\psi\ \to/^(->/\PA'\to^{\zeta^*}\PA\to^{\dphi\uphi}\CM\phi).$$

\begin{prop} \label{M_functor}
$\CM:\ChuCon(\QDist)^{\op}\to\QCCat$ is a full functor. Moreover, $\CM$ is a quantaloid homomorphism.
\end{prop}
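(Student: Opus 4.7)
The plan is to verify three things in succession: that $\CM$ respects identities and composition of Chu connections, that it preserves local joins, and that it is full. Throughout I would exploit the factorisation $\CM(\zeta,\eta) = \dphi\uphi\circ\zeta^*\circ i_\psi$, where $i_\psi:\CM\psi\hookrightarrow\PA'$ is the subcategory inclusion.

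For functoriality, the identity Chu connection on $\phi$ is $(\bbA,\bbB)$, and since $\bbA^* = \id_{\PA}$ and $\dphi\uphi$ restricts to the identity on $\CM\phi$, one has $\CM(\bbA,\bbB) = \id_{\CM\phi}$. For composable Chu connections $(\zeta,\eta):\phi\to\psi$ and $(\zeta',\eta'):\psi\to\chi$, the equality $\CM(\zeta'\circ\zeta,\eta'\circ\eta) = \CM(\zeta,\eta)\circ\CM(\zeta',\eta')$ reduces via $(\zeta'\circ\zeta)^* = \zeta^*\circ(\zeta')^*$ to checking $\dphi\uphi\zeta^*\dpsi\upsi = \dphi\uphi\zeta^*$; this follows by applying the Chu condition $\uphi\zeta^* = \eta_\dag\upsi$ from Proposition~\ref{Chu_con_condition} together with the triangle identity $\upsi\dpsi\upsi = \upsi$ of the Isbell adjunction. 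For local joins, the componentwise description of joins in $\ChuConQ(\phi,\psi)$, the pointwise description of joins in $\QCCat(\CM\psi,\CM\phi)$ noted before Proposition~\ref{QCCat_QopCCatop_iso}, the distributor identity $\bigl(\bigvee_i\zeta_i\bigr)^* = \bigvee_i\zeta_i^*$, and sup-preservation of the reflector $\dphi\uphi:\PA\to\CM\phi$ (apparent from \eqref{tensor_closure_system}) combine to yield $\CM\bigl(\bigvee_i(\zeta_i,\eta_i)\bigr) = \bigvee_i\CM(\zeta_i,\eta_i)$.

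Fullness is the substantive part. Given a left adjoint $H:\CM\psi\to\CM\phi$ in $\QCat$ with right adjoint $H^R$, I would define
$$\zeta:\bbA\oto\bbA',\ \zeta(x,x') := H(\dpsi\hpsi x')(x);\qquad \eta:\bbB\oto\bbB',\ \eta(y,y') := \bigl(\upsi H^R(\tphi y)\bigr)(y').$$
These formulas make sense since $\tphi y\in\CM\phi$ and $\dpsi\hpsi x'\in\CM\psi$ by the triangle identities. To prove $\CM(\zeta,\eta) = H$, observe that both $\dphi\uphi\circ\zeta^*$ and $H\circ\dpsi\upsi$ are sup-preserving $\CQ$-functors $\PA'\to\CM\phi$ agreeing on every representable $\sY_{\bbA'}x'$ (both values are $H(\dpsi\hpsi x')$, since elements of $\CM\phi$ are fixed by $\dphi\uphi$); because every presheaf is a tensored supremum of representables, the two functors coincide, and restriction to $\CM\psi$ (on which $\dpsi\upsi$ acts as the identity) gives the claim. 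To check $(\zeta,\eta)$ is a Chu connection, i.e.\ $\phi\lda\zeta = \eta\rda\psi$, I would compare both sides at $(x',y)$. Using the adjunction $H\dv H^R$, full faithfulness of the inclusions $\CM\phi\hookrightarrow\PA$ and $\CM\psi\hookrightarrow\PA'$, the identity $\mu'\lda\dpsi\upsi\nu = \mu'\lda\nu$ for $\mu'\in\CM\psi$, and Yoneda, one finds $(\phi\lda\zeta)(x',y) = H^R(\tphi y)(x')$. Expanding the right-hand side via the formula for right implication recognises $\bigwedge_{y'}(\upsi H^R(\tphi y))(y')\rda\psi(x',y')$ as $(\dpsi\upsi H^R(\tphi y))(x')$, which likewise equals $H^R(\tphi y)(x')$ because $H^R(\tphi y)\in\CM\psi$ is $\dpsi\upsi$-fixed.

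The main obstacle is precisely this last step: simultaneously juggling the Isbell adjunction $\upsi\dv\dpsi$, Yoneda, and the given adjunction $H\dv H^R$ so that both sides of the Chu condition collapse to the single value $H^R(\tphi y)(x')$; once one hits on the formulas for $\zeta$ and $\eta$ above, the rest is careful bookkeeping.
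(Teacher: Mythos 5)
Your proposal is, in substance, the paper's own argument: the distributors you construct for fullness are exactly the ones used in the paper (your $\tzeta x'=H(\dpsi\hpsi x')=F\dpsi\upsi\sY_{\bbA'}x'$ and $\heta y=\upsi H^R(\tphi y)=\upsi G\dphi\sYd_{\bbB}y$, written pointwise instead of via transposes), the verification that they form a Chu connection and that $\CM(\zeta,\eta)=H$ follows the same adjunction/Yoneda chain through the common value $H^R(\tphi y)(x')$, and the join-preservation step is identical to the paper's. Your treatment of composition is a small but genuine variant: instead of the paper's inequality $\zeta^*\dpsi\upsi\leq\dphi\uphi\zeta^*$ (Lemma~\ref{zeta_continuous}) you derive the equality $\uphi\zeta^*\dpsi\upsi=\eta_{\dag}\upsi\dpsi\upsi=\eta_{\dag}\upsi=\uphi\zeta^*$ directly from Proposition~\ref{Chu_con_condition}(ii) and a triangle identity, then apply $\dphi$. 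That is correct and arguably cleaner.

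There is, however, one genuine omission. You never check that $\CM(\zeta,\eta)$ is an arrow of $\QCCat$ at all, i.e.\ that it is a \emph{left adjoint} $\CQ$-functor $\CM\psi\to\CM\phi$. This is not free from your factorisation $\CM(\zeta,\eta)=\dphi\uphi\circ\zeta^*\circ i_\psi$: while $\zeta^*$ and the reflector $\dphi\uphi:\PA\to\CM\phi$ are left adjoints, the inclusion $i_\psi:\CM\psi\hookrightarrow\PA'$ is a \emph{right} adjoint, so the composite is not a composite of left adjoints. The paper devotes its entire first step to this point, exhibiting the right adjoint of $\CM(\zeta,\eta)$ as the restriction of $\zeta_*$ to $\CM\phi\to\CM\psi$ --- and even the fact that $\zeta_*$ carries $\CM\phi$ into $\CM\psi$ requires Lemma~\ref{zeta_continuous}. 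Without this step, the claim that $\CM$ lands in $\QCCat$ rather than merely in $\QCat$ is unproved, and the later use of $\CM$ to establish the equivalence with $\BB(\QDist)^{\op}$ would not go through. The gap is fillable from material you already have: your identity $\dphi\uphi\zeta^*\dpsi\upsi=\dphi\uphi\zeta^*$ says that $\CM(\zeta,\eta)$ precomposed with the reflector $\dpsi\upsi:\PA'\to\CM\psi$ equals the left adjoint $\dphi\uphi\zeta^*$, and a short computation then yields $\CM(\zeta,\eta)\dv\dpsi\upsi\circ\zeta_*$ on $\CM\phi$; alternatively, follow the paper and first prove $\zeta^*\dpsi\upsi\leq\dphi\uphi\zeta^*$. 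But as written the step is simply absent.
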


Before proving this proposition, we would like to point out that the composite of $\CM$ with the functor $(\Box^{\nat},\Box_{\nat}): \QChu\to\ChuCon(\QDist)^{\op}$ is exactly the functor $\CM:\QChu\to\QCCat$ obtained in \cite{Shen2013a}. So, the functor $\CM$ in Proposition \ref{M_functor} is an extension of the functor $\CM:\QChu\to\QCCat$ in \cite{Shen2013a}. 

We first prove the following lemma as a preparation:

\begin{lem} \label{zeta_continuous}
If $(\zeta,\eta):(\phi:\bbA\oto\bbB)\to(\psi:\bbA'\oto\bbB')$ is a Chu connection, then
$$\zeta^*\dpsi\upsi\leq\dphi\uphi\zeta^*:\PA'\to\PA.$$
\end{lem}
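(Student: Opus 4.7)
The plan is to exploit the characterization of Chu connections at the level of presheaf categories given in Proposition \ref{Chu_con_condition}, namely $\uphi\zeta^*=\eta_{\dag}\upsi$, together with the triangle identities for the Isbell adjunctions $\uphi\dv\dphi$ and $\upsi\dv\dpsi$. The strategy is to show that $\alpha:=\zeta^*\dpsi\upsi\mu'$ and $\beta:=\dphi\uphi\zeta^*\mu'$ become equal after one application of $\uphi$; this, combined with the fact that $\beta$ already lies in $\CM\phi=\Fix(\dphi\uphi)$, will force $\alpha\leq\beta$ via the monad unit.

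Fix $\mu'\in\PA'$. I would first compute
\[\uphi\alpha=\uphi\zeta^*(\dpsi\upsi\mu')=\eta_{\dag}\upsi\dpsi\upsi\mu'=\eta_{\dag}\upsi\mu',\]
where the second equality uses Proposition \ref{Chu_con_condition} and the third uses the triangle identity $\upsi\dpsi\upsi=\upsi$. Symmetrically,
\[\uphi\beta=\uphi\dphi\uphi(\zeta^*\mu')=\uphi\zeta^*\mu'=\eta_{\dag}\upsi\mu',\]
using the triangle identity $\uphi\dphi\uphi=\uphi$ and again the Chu-connection identity. Hence $\uphi\alpha=\uphi\beta$.

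Applying $\dphi$ to both sides yields $\dphi\uphi\alpha=\dphi\uphi\beta$. Since $\beta$ lies in the image of $\dphi$, and $\dphi\uphi\dphi=\dphi$ (the same triangle identity, transposed), $\beta$ is a fixed point of the $\CQ$-monad $\dphi\uphi$, so $\dphi\uphi\beta=\beta$. Combining this with the unit inequality $\alpha\leq\dphi\uphi\alpha$ (which holds because $\dphi\uphi$ is a $\CQ$-monad on $\PA$), we conclude
\[\alpha\leq\dphi\uphi\alpha=\dphi\uphi\beta=\beta,\]
as required. The only potentially subtle point is making sure the identity $\uphi\zeta^*=\eta_{\dag}\upsi$ is cited in the right form; once that is in hand, everything else is an application of triangle identities and the closure-operator property of $\dphi\uphi$, so no serious obstacle arises.
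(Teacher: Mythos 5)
Your proof is correct, but its second half takes a genuinely different route from the paper's. Both arguments start from the same key input, Proposition \ref{Chu_con_condition}(ii), i.e., the identity $\uphi\zeta^*=\eta_{\dag}\upsi$. The paper then reduces the claim to the lax commutativity of the right-hand square of its diagram, namely $\zeta^*\dpsi\leq\dphi\eta_{\dag}:\PdB'\to\PA$, and verifies this by an explicit distributor-level computation that invokes the defining equation $\phi\lda\zeta=\eta\rda\psi$ a second time. You instead stay entirely at the formal level: the triangle identities $\upsi\dpsi\upsi=\upsi$ and $\uphi\dphi\uphi=\uphi$ (which do hold on the nose here because $\PA$, $\PA'$, $\PdB$, $\PdB'$ are skeletal) show that $\zeta^*\dpsi\upsi\mu'$ and $\dphi\uphi\zeta^*\mu'$ have the same image under $\uphi$, and the unit $1_{\PA}\leq\dphi\uphi$ together with the fact that $\dphi\uphi$ fixes the image of $\dphi$ then yields the inequality. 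Your argument is cleaner in that it never returns to the level of distributors, and it actually delivers the slightly stronger equality $\dphi\uphi\zeta^*\dpsi\upsi=\dphi\uphi\zeta^*$, which the paper re-derives separately in the proof of Lemma \ref{equiv_Chu_con_condition}. What the paper's computation buys in exchange is the intermediate inequality $\zeta^*\dpsi\leq\dphi\eta_{\dag}$, a statement about $\dpsi$ and $\eta_{\dag}$ alone rather than about their composites with $\upsi$, which is strictly more information than the lemma requires. There is no gap in your argument.
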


\begin{proof}
Consider the following diagram:
$$\bfig
\square|alrb|<700,500>[\PA'`\PdB'`\PA`\PdB;\upsi`\zeta^*`\eta_{\dag}`\uphi]
\square(700,0)/>``>`>/<700,500>[\PdB'`\PA'`\PdB`\PA;\dpsi``\zeta^*`\dphi]
\place(1050,250)[\twoar(-1,-1)]
\efig$$
Note that the commutativity of the left square follows from Proposition \ref{Chu_con_condition}(ii), and it suffices to prove $\zeta^*\dpsi\leq\dphi\eta_{\dag}$. Indeed, for all $\lam'\in\PdB'$,
\begin{align*}
\zeta^*\dpsi\lam'&=(\lam'\rda\psi)\circ\zeta\\
&\leq(\eta\rda\lam')\rda((\eta\rda\psi)\circ\zeta)\\
&=(\eta\rda\lam')\rda((\phi\lda\zeta)\circ\zeta)&((\zeta,\eta)\ \text{is a Chu connection})\\
&\leq(\eta\rda\lam')\rda\phi\\
&=\dphi\eta_{\dag}\lam'.
\end{align*}
\end{proof}

\begin{proof}[Proof of Proposition \ref{M_functor}]
{\bf Step 1.} $\CM(\zeta,\eta):\CM\psi\to\CM\phi$ is a left adjoint $\CQ$-functor.

First, we claim that $\zeta_*:\PA\to\PA'$ can be restricted as a $\CQ$-functor $\zeta_*:\CM\phi\to\CM\psi$. Indeed, from $\zeta^*\dv\zeta_*:\PA\to\PA'$ and Lemma \ref{zeta_continuous} one has $\dpsi\upsi\leq\zeta_*\dphi\uphi\zeta^*$, and consequently for all $\mu\in\CM\phi$,
$$\dpsi\upsi\zeta_*\mu\leq \zeta_*\dphi\uphi\zeta^*\zeta_*\mu\leq\zeta_*\dphi\uphi\mu =\zeta_*\mu.$$
This shows that $\zeta_*\mu\in\CM\psi$ for all $\mu\in\CM\phi$. So, it remains to prove $\CM(\zeta,\eta)\dv\zeta_*:\CM\phi\to\CM\psi$. Since $\dphi\uphi$ is a $\CQ$-monad on $\PA$,  then for all $\mu'\in\CM\psi$, $\mu\in\CM\phi$,
\begin{align*}
\PA(\zeta^*\mu',\mu)&\leq\PA(\dphi\uphi\zeta^*\mu',\dphi\uphi\mu)\\
&=\PA(\dphi\uphi\zeta^*\mu',\mu)\\
&=\mu\lda(\dphi\uphi\zeta^*\mu')\\
&\leq\mu\lda\zeta^*\mu'\\
&=\PA(\zeta^*\mu',\mu).
\end{align*}
Hence
\begin{align*}
\CM\phi(\CM(\zeta,\eta)\mu',\mu)&=\PA(\dphi\uphi\zeta^*\mu',\mu)\\
&=\PA(\zeta^*\mu',\mu)\\
&=\PA'(\mu',\zeta_*\mu)\\
&=\CM\psi(\mu',\zeta_*\mu),
\end{align*}
as desired.

{\bf Step 2.} $\CM:\ChuCon(\QDist)^{\op}\to\QCCat$ is a functor. For this one must check that
$$\CM(\zeta,\eta)\CM(\zeta',\eta')= \CM(\zeta'\circ\zeta,\eta'\circ\eta):\CM\xi\to\CM\phi$$
for any Chu connections $(\zeta,\eta):\phi\to \psi$ and $(\zeta',\eta'):\psi\to \xi$. It suffices to show that
$$\dphi\uphi\zeta^*\dpsi\upsi\zeta'^* = \dphi\uphi\zeta^*\zeta'^*.$$  On one hand, by Lemma \ref{zeta_continuous} one immediately has
$$\dphi\uphi\zeta^*\dpsi\upsi\zeta'^*\leq\dphi\uphi\dphi\uphi\zeta^*\zeta'^* =\dphi\uphi\zeta^*\zeta'^*$$
since $\dphi\uphi$ is idempotent. On the other hand, $\dphi\uphi\zeta^*\zeta'^* \leq\dphi\uphi\zeta^*\dpsi\upsi\zeta'^*$ holds trivially since $1_{\PA'}\leq\dpsi\upsi$.

{\bf Step 3.} $\CM$ is full. For all $\CQ$-distributors $\phi:\bbA\oto\bbB$, $\psi:\bbA'\oto\bbB'$, one needs to show that
$$\CM:\ChuCon(\QDist)(\phi,\psi)\to\QCCat(\CM\psi,\CM\phi)$$
is surjective.

For a left adjoint $\CQ$-functor $F:\CM\psi\to\CM\phi$, let $G:\CM\phi\to\CM\psi$ be its right adjoint. Define $\CQ$-distributors $\zeta:\bbA\oto\bbA'$, $\eta:\bbB\oto\bbB'$ through their transposes (see Equations (\ref{tphi_def}), (\ref{hphi_def}) for the definition):
\begin{align}
\tzeta:=&(\bbA'\to^{\sY_{\bbA'}}\PA'\to^{\dpsi\upsi}\CM\psi\to^F\CM\phi\ \to/^(->/\PA),\label{tzeta_def}\\
\heta:=&(\bbB\to^{\sYd_{\bbB}}\PdB\to^{\dphi}\CM\phi\to^G\CM\psi\to^{\upsi}\Fix(\upsi\dpsi)\ \to/^(->/\PdB').\label{heta_def}
\end{align}
We claim that $(\zeta,\eta):\phi\to\psi$ is a Chu connection and $\CM(\zeta,\eta)=F$.

(1) $(\zeta,\eta):\phi\to\psi$ is a Chu connection. For all $x'\in\bbA'_0$ and $y\in\bbB_0$ it holds that
\begin{align*}
(\phi\lda\zeta)(x',y)&=\phi(-,y)\lda\tzeta x'&\text{(Equation (\ref{tphi_def}))}\\
&=(\dphi\sYd_{\bbB}y)\lda(F\dpsi\upsi\sY_{\bbA'}x')&\text{(Equation (\ref{tzeta_def}))}\\
&=\PA(F\dpsi\upsi\sY_{\bbA'}x',\ \dphi\sYd_{\bbB}y)\\
&=\PA'(\dpsi\upsi\sY_{\bbA'}x',\ G\dphi\sYd_{\bbB}y)&(F\dv G)\\
&=\PA'(\dpsi\upsi\sY_{\bbA'}x',\ \dpsi\upsi G\dphi\sYd_{\bbB}y)&(\text{the codomain of}\ G\ \text{is}\ \CM\psi)\\
&=\PdB'(\upsi\dpsi\upsi\sY_{\bbA'}x',\ \upsi G\dphi\sYd_{\bbB}y)&(\upsi\dv\dpsi)\\
&=\PdB'(\upsi\sY_{\bbA'}x',\ \upsi G\dphi\sYd_{\bbB}y)\\
&=(\upsi G\dphi\sYd_{\bbB}y)\rda(\upsi\sY_{\bbA'}x')\\
&=\heta y\rda\psi(x',-)&\text{(Equation (\ref{heta_def}))}\\
&=(\eta\rda\psi)(x',y),&\text{(Equation (\ref{hphi_def}))}
\end{align*} showing that $(\zeta,\eta):\phi\to\psi$ is a Chu connection.

(2) $\CM(\zeta,\eta)=F$. First of all, it follows from Example \ref{PA_tensor_complete} and Equation (\ref{tensor_closure_system}) that the tensor $f\otimes\mu$ in $\CM\phi$ is given by
\begin{equation} \label{tensor_DPB}
f\otimes\mu=\dphi\uphi(f\circ\mu)
\end{equation}
for all $\mu\in \CM\phi$, $f\in\CP(t\mu)$. Note that $\dphi\uphi:\PA\to\CM\phi$ and $\dpsi\upsi:\PA'\to \CM\psi$ are both left adjoint $\CQ$-functors by Proposition \ref{monad_reflective}(1), thus so is the composite
$$F\cdot\dpsi\upsi:\PA'\to \CM\psi\to \CM\phi.$$

For any $\mu'\in \CM\psi$, since the presheaf $\mu'\circ\zeta$ is the pointwise join of the $\CQ$-distributors $\mu'(x')\circ(\tzeta x')$ $(x'\in\bbA'_0)$, one has
\begin{align*}
\CM(\zeta,\eta)\mu'&=\dphi\uphi\zeta^*\mu'\\
&=\dphi\uphi(\mu'\circ\zeta)\\
&=\dphi\uphi\Big(\bv_{x'\in\bbA'_0}\mu'(x')\circ(\tzeta x')\Big)\\
&=\dphi\uphi\Big(\bv_{x'\in\bbA'_0}\mu'(x')\circ(F\dpsi\upsi\sY_{\bbA'}x')\Big)&\text{(Equation (\ref{tzeta_def}))}\\
&=\bigsqcup_{x'\in\bbA'_0}\dphi\uphi(\mu'(x')\circ(F\dpsi\upsi\sY_{\bbA'}x'))&\text{(Proposition \ref{la_condition}(iii))}\\
&=\bigsqcup_{x'\in\bbA'_0}\mu'(x')\otimes(F\dpsi\upsi\sY_{\bbA'}x')&\text{(Equation (\ref{tensor_DPB}))}\\
&=F\dpsi\upsi\Big(\bv_{x'\in\bbA'_0}\mu'(x')\circ(\sY_{\bbA'}x')\Big)&\text{(Proposition \ref{la_condition}(iii))}\\
&=F\dpsi\upsi(\mu'\circ\bbA')\\
&=F\dpsi\upsi\mu'\\
&=F\mu',&(\mu'\in\CM\psi)
\end{align*}
where $\bv$ and $\bigsqcup$ respectively denote the underlying joins in $\PA$ and $\CM\phi$. Therefore $\CM(\zeta,\eta)=F$, as desired.

{\bf Step 4.} $\CM:\ChuCon(\QDist)^{\op}\to\QCCat$ is a quantaloid homomorphism. To show that $\CM$ preserves joins of Chu connections, let $\{(\zeta_i,\eta_i)\}_{i\in I}$ be a family of Chu connections from $\phi:\bbA\oto\bbB$ to $\psi:\bbA'\oto\bbB'$, one must check that
$$\CM\Big(\bv_{i\in I}\zeta_i,\bv_{i\in I}\eta_i\Big)=\bigsqcup_{i\in I}\CM(\zeta_i,\eta_i):\CM\psi\to\CM\phi,$$
where $\bigsqcup$ denotes the pointwise join in $\QCCat(\CM\psi,\CM\phi)$ inherited from $\CM\phi$. Indeed, since $\dphi\uphi:\PA\to\CM\phi$ is a left adjoint $\CQ$-functor, one has
\begin{align*}
\CM\Big(\bv_{i\in I}\zeta_i,\bv_{i\in I}\eta_i\Big)\mu'&=\dphi\uphi\Big(\bv_{i\in I}\zeta_i\Big)^*\mu'\\
&=\dphi\uphi\Big(\mu'\circ\bv_{i\in I}\zeta_i\Big)\\
&=\dphi\uphi\Big(\bv_{i\in I}\mu'\circ\zeta_i\Big)\\
&=\bigsqcup_{i\in I}\dphi\uphi(\mu'\circ\zeta_i)&\text{(Proposition \ref{la_condition}(iii))}\\
&=\bigsqcup_{i\in I}\dphi\uphi\zeta_i^*\mu'\\
&=\bigsqcup_{i\in I}\CM(\zeta_i,\eta_i)\mu'
\end{align*}
for all $\mu'\in\CM\psi$, completing the proof.
\end{proof}


If $F:\bbA\to\bbB$ is a left adjoint $\CQ$-functor with $G:\bbB\to\bbA$ being its right adjoint, then
$$(F,G):(\bbA:\bbA\oto\bbA)\to(\bbB:\bbB\oto\bbB)$$
is a Chu transform between identity $\CQ$-distributors.
It is easy to verify that the assignment $F\mapsto(F,G)$ defines a functor $\CI:\QCCat\to\QChu$, and the composite functor
$$\CJ:=(\QCCat\to^{\CI}\QChu\to^{(\Box^{\nat},\Box_{\nat})}\ChuCon(\QDist)^{\op})$$
is 2-functorial.

\begin{thm} \label{M_J_id}
 $\QCCat$ is a retract of $\ChuCon(\QDist)^{\op}$ (in $2\text{-}\CAT$).
\end{thm}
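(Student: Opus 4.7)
The plan is to construct a natural isomorphism $\sY : \id_{\QCCat} \Rightarrow \CM\CJ$, which witnesses $\QCCat$ as a retract of $\ChuCon(\QDist)^{\op}$ in the 2-categorical sense. That both $\CJ$ and $\CM$ are 2-functors is already in hand: $\CJ$ by construction, and $\CM$ by Proposition \ref{M_functor}, since a quantaloid homomorphism between locally ordered 2-categories automatically preserves 2-cells.

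\textbf{Step 1: objects.} For $\bbA \in \QCCat$, $\CJ\bbA = (\bbA : \bbA \oto \bbA)$ is the identity $\CQ$-distributor on $\bbA$. Since $\bbA$ is complete, the Isbell operators on this particular distributor reduce to Yoneda ones: the identity $\bbA(\sup\mu,-) = \bbA \lda \mu$ recalled in Subsection \ref{Q_Cat} together with its dual yield
$$\bbA \lda \mu = \sYd_{\bbA}(\sup\mu), \qquad \lam \rda \bbA = \sY_{\bbA}(\inf\lam)$$
for all $\mu \in \PA$ and $\lam \in \PdA$. Composing gives $\bbA^{\da}\bbA_{\ua}\mu = \sY_{\bbA}(\sup\mu)$, so $\CM\bbA = \Fix(\bbA^{\da}\bbA_{\ua}) = \sY_{\bbA}(\bbA_0)$. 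Because $\bbA$ is skeletal and $\sY_{\bbA}$ is fully faithful (Lemma \ref{Yoneda_lemma}), its corestriction to its image is a $\CQ$-isomorphism $\sY_{\bbA} : \bbA \to \CM\bbA$.

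\textbf{Step 2: morphisms.} Let $F \dv G : \bbB \to \bbA$ be an adjunction with $F \in \QCCat$. By Proposition \ref{QCat_Chu_transform}(iii), $\CJ F = (F^{\nat}, G_{\nat}) : \bbB \to \bbA$ in $\ChuCon(\QDist)$, and the definition of $\CM$ before Proposition \ref{M_functor} yields
$$\CM\CJ F : \CM\bbA \to \CM\bbB, \qquad \mu \mapsto \bbB^{\da}\bbB_{\ua}(\mu \circ F^{\nat}).$$
Proposition \ref{tphi_hphi_Yoneda} applied to $\phi = F^{\nat}$ gives
$$(F^{\nat})^* \sY_{\bbA} x = F^{\nat}(-, x) = \bbB(-, Fx) = \sY_{\bbB}(Fx),$$
which already lies in $\CM\bbB = \sY_{\bbB}(\bbB_0)$ by Step 1. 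Hence $\CM\CJ F (\sY_{\bbA} x) = \sY_{\bbB}(Fx)$, i.e. $\CM\CJ F \circ \sY_{\bbA} = \sY_{\bbB} \circ F$, which is precisely the naturality of $\sY$ at $F$.

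\textbf{Step 3: assembly.} The $\CQ$-isomorphisms of Step 1, together with the naturality of Step 2, form an invertible natural transformation $\sY : \id_{\QCCat} \Rightarrow \CM\CJ$. Since $\QCCat$ is locally ordered and $\sY$ is componentwise invertible, it is automatically a 2-natural isomorphism, establishing the retract. The substantive ingredient is Step 1, whose whole point is that the Isbell completion of a skeletal complete $\CQ$-category is (canonically) itself; after that Proposition \ref{tphi_hphi_Yoneda} collapses Step 2 to a one-line Yoneda identity, and Step 3 is purely formal.
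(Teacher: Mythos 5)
Your proposal is correct and follows essentially the same route as the paper: identify $\CM\CJ\bbA$ with the image of the Yoneda embedding via the identity $\bbA^{\da}\bbA_{\ua}\mu=\sY_{\bbA}\sup\mu$ (which the paper derives directly from $\bbA(\sup\mu,-)=\bbA\lda\mu$, where you route it through $\sYd$ and $\inf$), and then verify naturality by computing $(F^{\nat})^*\sY_{\bbA}x=\sY_{\bbB}Fx$ exactly as in the paper's proof. The only cosmetic difference is your explicit remark that $\sY_{\bbB}Fx$ is already a fixed point, versus the paper's direct evaluation of $\bbB^{\da}\bbB_{\ua}\bbB(-,Fx)$; both are the same calculation.
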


\begin{proof} It suffices to prove that the 2-functor $\CM\CJ$ is naturally isomorphic to the identity 2-functor on $\QCCat$.

First note that for each skeletal complete $\CQ$-category $\bbA$,
$$\CM\CJ\bbA=\{\mu\in\PA\mid\bbA^{\da}\bbA_{\ua}\mu=\mu\}=\{\sY_{\bbA} x\mid x\in\bbA_0\}.$$
Indeed, for each $\mu\in\PA$, from $\bbA(\sup\mu,-)=\PA(\mu,\sY_{\bbA}-)=\bbA\lda\mu$ one has
$$\bbA^{\da}\bbA_{\ua}\mu=(\bbA\lda\mu)\rda\bbA=\bbA(\sup\mu,-)\rda\bbA=\bbA(-,\sup\mu)=\sY_{\bbA}\sup\mu.$$
Thus $\CM\CJ\bbA\subseteq\{\sY_{\bbA} x\mid x\in\bbA_0\}$, and the reverse inclusion is easy.

By Yoneda lemma, the correspondence $x\mapsto\sY_{\bbA}x$ induces a fully faithful $\CQ$-functor
$$\sY_{\bbA}:\bbA\to\{\sY_{\bbA} x\mid x\in\bbA_0\}=\CM\CJ\bbA.$$
It is clear that $\sY_{\bbA}$ is surjective, hence an isomorphism of skeletal $\CQ$-categories.

To see the naturality of $\{\sY_{\bbA}\}$, for each left $\CQ$-functor $F:\bbA\to\bbB$ between skeletal complete $\CQ$-categories, we prove the commutativity of the following diagram:
$$\bfig
\square<700,500>[\bbA`\CM\CJ\bbA`\bbB`\CM\CJ\bbB;\sY_{\bbA}`F`\CM\CJ F=\CM(F^{\nat},G_{\nat})`\sY_{\bbB}]
\efig$$
This is easy since
\begin{align*}
\CM(F^{\nat},G_{\nat})\sY_{\bbA}x&=\bbB^{\da}\bbB_{\ua}(F^{\nat})^*\sY_{\bbA}x\\
&=\bbB^{\da}\bbB_{\ua}(F^{\nat}(-,x))&\text{(Proposition \ref{tphi_hphi_Yoneda})}\\
&=(\bbB\lda\bbB(-,Fx))\rda\bbB\\
&=\bbB(-,Fx)\\
&=\sY_{\bbB} Fx
\end{align*}
for all $x\in\bbA_0$.
\end{proof}

The universal property of the quotient quantaloid $\BB(\QDist)$ along with the following Lemma \ref{equiv_Chu_con_condition} ensures that $\CM$ factors uniquely through the quotient homomorphism $\si$ via a quantaloid homomorphism $\CM_{\rb}$:
$$\bfig
\qtriangle/->`->`-->/<1500,500>[\ChuCon(\QDist)^{\op} `\BB(\QDist)^{\op}`\QCCat;\si^{\op}`\CM`\CM_{\rb}]
\efig$$

\begin{lem} \label{equiv_Chu_con_condition}
For Chu connections $(\zeta,\eta),(\zeta',\eta'):(\phi:\bbA\oto\bbB)\to(\psi:\bbA'\oto\bbB')$, the following statements are equivalent:
\begin{itemize}
\item[\rm (i)] $(\zeta,\eta)\sim(\zeta',\eta')$.
\item[\rm (ii)] $\uphi\zeta^*=\uphi\zeta'^*:\PA'\to\PB$.
\item[\rm (iii)] $\dphi\uphi\zeta^*=\dphi\uphi\zeta'^*:\PA'\to\PA$.
\item[\rm (iv)] $\CM(\zeta,\eta)=\CM(\zeta',\eta'):\CM\psi\to\CM\phi$.
\end{itemize}
\end{lem}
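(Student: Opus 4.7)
The plan is to prove the cycle (i)$\Rightarrow$(ii)$\Rightarrow$(iii)$\Rightarrow$(iv)$\Rightarrow$(iii)$\Rightarrow$(ii)$\Rightarrow$(i), grouping the arguments into three blocks: (i)$\Leftrightarrow$(ii), (ii)$\Leftrightarrow$(iii), and (iii)$\Leftrightarrow$(iv). Throughout, I will use the reformulation from the proof of Proposition \ref{Chu_con_condition} that $\uphi\zeta^{*}\mu'=\phi\lda(\mu'\circ\zeta)=(\phi\lda\zeta)\lda\mu'$ for every presheaf $\mu'\in\PA'$, and symmetrically for $\zeta'$.

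For (i)$\Leftrightarrow$(ii), the direction (i)$\Rightarrow$(ii) is immediate: if $\phi\lda\zeta=\phi\lda\zeta'$ then $\uphi\zeta^{*}\mu'=(\phi\lda\zeta)\lda\mu'=(\phi\lda\zeta')\lda\mu'=\uphi\zeta'^{*}\mu'$ for all $\mu'\in\PA'$. Conversely, I will test $\uphi\zeta^{*}=\uphi\zeta'^{*}$ on the representable presheaves $\sY_{\bbA'}x'=\bbA'(-,x')$: by Proposition \ref{tphi_hphi_Yoneda} (via $\tzeta x'=\zeta(-,x')=\sY_{\bbA'}x'\circ\zeta$) one obtains $\uphi\zeta^{*}\sY_{\bbA'}x'=\phi\lda\tzeta x'=(\phi\lda\zeta)(x',-)$, and similarly for $\zeta'$; equality for every $x'\in\bbA'_{0}$ then recovers $\phi\lda\zeta=\phi\lda\zeta'$, which is (i).

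For (ii)$\Leftrightarrow$(iii), the implication (ii)$\Rightarrow$(iii) follows by post-composing with $\dphi$. For the converse, I will apply $\uphi$ on the left of both sides of (iii) and use the general fact that in any adjunction $\uphi\dv\dphi$ between (pre)ordered sets one has $\uphi\dphi\uphi=\uphi$; this collapses $\uphi\dphi\uphi\zeta^{*}=\uphi\dphi\uphi\zeta'^{*}$ down to $\uphi\zeta^{*}=\uphi\zeta'^{*}$.

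For (iii)$\Leftrightarrow$(iv), the direction (iii)$\Rightarrow$(iv) is just restriction along the inclusion $\CM\psi\hookrightarrow\PA'$, since by definition $\CM(\zeta,\eta)$ is $\dphi\uphi\zeta^{*}$ restricted to $\CM\psi$. The reverse direction (iv)$\Rightarrow$(iii) is the step I expect to require the most care, and it is where Lemma \ref{zeta_continuous} will do the work. From $\zeta^{*}\dpsi\upsi\leq\dphi\uphi\zeta^{*}$ and idempotency of $\dphi\uphi$ one gets
\[
\dphi\uphi\zeta^{*}\dpsi\upsi\leq\dphi\uphi\dphi\uphi\zeta^{*}=\dphi\uphi\zeta^{*},
\]
while the reverse inequality $\dphi\uphi\zeta^{*}\leq\dphi\uphi\zeta^{*}\dpsi\upsi$ is trivial from $1_{\PA'}\leq\dpsi\upsi$, giving the key identity $\dphi\uphi\zeta^{*}=\dphi\uphi\zeta^{*}\dpsi\upsi$ and the analogous one for $\zeta'$. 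Since $\dpsi\upsi$ lands in $\CM\psi$, hypothesis (iv) then yields, for every $\mu'\in\PA'$,
\[
\dphi\uphi\zeta^{*}\mu'=\CM(\zeta,\eta)(\dpsi\upsi\mu')=\CM(\zeta',\eta')(\dpsi\upsi\mu')=\dphi\uphi\zeta'^{*}\mu',
\]
which is (iii). This closes the cycle and finishes the proof.
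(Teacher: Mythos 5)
Your proposal is correct and follows essentially the same route as the paper: (i)$\Leftrightarrow$(ii) via the identity $\uphi\zeta^*\mu'=(\phi\lda\zeta)\lda\mu'$ and testing on representables with Proposition \ref{tphi_hphi_Yoneda}, and (iv)$\Rightarrow$(iii) via Lemma \ref{zeta_continuous} giving $\dphi\uphi\zeta^*\dpsi\upsi=\dphi\uphi\zeta^*$. The only cosmetic difference is that you spell out (ii)$\Leftrightarrow$(iii) using $\uphi\dphi\uphi=\uphi$, which the paper dismisses as trivial.
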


\begin{proof}
(i)${}\Lra{}$(ii): Suppose $\phi\lda\zeta=\phi\lda\zeta'$. For each $\mu'\in\PA'$,
$$\uphi\zeta^*\mu'=\phi\lda(\mu'\circ\zeta)=(\phi\lda\zeta)\lda\mu'=(\phi\lda\zeta')\lda\mu'=\phi\lda(\mu'\circ\zeta')=\uphi\zeta'^*\mu'.$$

(ii)${}\Lra{}$(i): For each $x'\in\bbA'_0$, by Proposition \ref{tphi_hphi_Yoneda} one has
$$\phi\lda\zeta(-,x')=\phi\lda(\zeta^*\sY_{\bbA'}x')=\uphi\zeta^*\sY_{\bbA'}x'=\uphi\zeta'^*\sY_{\bbA'}x'=\phi\lda(\zeta'^*\sY_{\bbA'}x')=\phi\lda\zeta'(-,x').$$

(ii)${}\iff{}$(iii) and (iii)${}\Lra{}$(iv): Trivial.

(iv)${}\Lra{}$(iii): For any Chu connection $(\zeta,\eta):(\phi:\bbA\oto\bbB)\to(\psi:\bbA'\oto\bbB')$, from Lemma \ref{zeta_continuous} one derives
$$\dphi\uphi\zeta^*\dpsi\upsi\leq\dphi\uphi\dphi\uphi\zeta^*=\dphi\uphi\zeta^*:\PA'\to\PA,$$
hence $\dphi\uphi\zeta^*\dpsi\upsi=\dphi\uphi\zeta^*$, since the reverse inequality is trivial. Therefore
$$\dphi\uphi\zeta^*\mu'=\dphi\uphi\zeta^*\dpsi\upsi\mu'=\CM(\zeta,\eta)\dpsi\upsi\mu'=\CM(\zeta',\eta')\dpsi\upsi\mu'=\dphi\uphi\zeta'^*\dpsi\upsi\mu'=\dphi\uphi\zeta'^*\mu'$$
for all $\mu'\in\PA'$, showing that $\dphi\uphi\zeta^*=\dphi\uphi\zeta'^*$.
\end{proof}

A little surprisingly, $\CM_{\rb}$ turns out to be an equivalence of quantaloids:

\begin{thm} \label{main}
$\CM_{\rb}:\BB(\QDist)^{\op}\to\QCCat$ is an equivalence of quantaloids; hence, $\BB(\QDist)$ and $\QCCat$ are dually equivalent quantaloids.
\end{thm}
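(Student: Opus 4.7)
The plan is to verify that $\CM_{\rb}$, viewed as a quantaloid homomorphism between the quantaloids $\BB(\QDist)^{\op}$ and $\QCCat$, is essentially surjective on objects, full, and faithful. All three properties will be extracted essentially for free from results already available in the excerpt, so the work consists in assembling them.

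First, I would remark that $\CM_{\rb}$ is well-defined and a quantaloid homomorphism. Indeed, Lemma \ref{equiv_Chu_con_condition} tells us that $(\zeta,\eta)\sim(\zeta',\eta')$ holds if and only if $\CM(\zeta,\eta)=\CM(\zeta',\eta')$, so $\CM$ collapses exactly the equivalence classes modded out in $\BB(\QDist)$. Combined with $\CM$ being a quantaloid homomorphism (Proposition \ref{M_functor}) and $\si$ being a full quantaloid homomorphism (so surjective on the hom-lattices), the factorization $\CM_{\rb}\circ\si^{\op}=\CM$ forces $\CM_{\rb}$ to preserve identities, composition and joins of back diagonals.

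Next, essential surjectivity is immediate from Theorem \ref{M_J_id}: for every skeletal complete $\CQ$-category $\bbA$, the Yoneda embedding yields an isomorphism $\sY_{\bbA}:\bbA\to\CM\CJ\bbA=\CM_{\rb}(\si^{\op}\CJ\bbA)$, so every object of $\QCCat$ is in the essential image of $\CM_{\rb}$. For fullness on hom-lattices, given a left adjoint $\CQ$-functor $F:\CM\psi\to\CM\phi$, Proposition \ref{M_functor} already provides a Chu connection $(\zeta,\eta):\phi\to\psi$ with $\CM(\zeta,\eta)=F$; its equivalence class $[(\zeta,\eta)]$ in $\BB(\QDist)(\phi,\psi)$ then maps to $F$ under $\CM_{\rb}$. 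For faithfulness, if two back diagonals $[(\zeta,\eta)]$ and $[(\zeta',\eta')]$ from $\phi$ to $\psi$ satisfy $\CM_{\rb}[(\zeta,\eta)]=\CM_{\rb}[(\zeta',\eta')]$, then $\CM(\zeta,\eta)=\CM(\zeta',\eta')$, and Lemma \ref{equiv_Chu_con_condition} ((iv)$\Lra$(i)) gives $(\zeta,\eta)\sim(\zeta',\eta')$, so the two classes coincide. Thus $\CM_{\rb}$ induces a bijection on each hom-set, and being a join-preserving map between complete lattices of morphisms whose inverse also preserves joins (both hom-lattices being quotient/image lattices via the same $\CM$), it is an order isomorphism on each hom-lattice.

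The dual statement then follows formally: an equivalence of quantaloids $\CM_{\rb}:\BB(\QDist)^{\op}\to\QCCat$ yields by taking $\op$ an equivalence $\BB(\QDist)\simeq(\QCCat)^{\op}$, i.e., $\BB(\QDist)$ and $\QCCat$ are dually equivalent. I do not expect any genuine obstacle here; the only minor point of care is the verification that $\CM_{\rb}$ is a bijection (not merely a surjection) on each hom-lattice, which is precisely the content of Lemma \ref{equiv_Chu_con_condition}. Everything conceptually non-trivial has already been established earlier in the paper, so the proof is a short assembly exercise.
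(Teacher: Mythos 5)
Your proposal is correct and follows essentially the same route as the paper: fullness and faithfulness of $\CM_{\rb}$ are read off from Proposition \ref{M_functor} and Lemma \ref{equiv_Chu_con_condition}, and essential surjectivity comes from the identification $\CM_{\rb}\CJ_{\rb}=\CM\CJ\cong 1_{\QCCat}$ supplied by Theorem \ref{M_J_id}. The only difference is cosmetic: you spell out the hom-lattice bijection explicitly, which the paper leaves implicit.
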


\begin{proof}
It suffices to check that  $\CM_{\rb}$ is fully faithful and essentially surjective on objects. First, the definition of $\CM_{\rb}$ guarantees its fullness and faithfulness by Proposition \ref{M_functor} and Lemma \ref{equiv_Chu_con_condition}. Second, let $\CJ_{\rb}$ be the composite 2-functor
$$\QCCat\to^{\CJ}\ChuCon(\QDist)^{\op} \to^{\si^{\op}}\BB(\QDist)^{\op}.$$
Then
$$\CM_{\rb}\CJ_{\rb}= \CM_{\rb}\si^{\op} \CJ=\CM\CJ,$$
showing that $\CM_{\rb}\CJ_{\rb}$ is naturally isomorphic to the identity 2-functor on $\QCCat$ and, in particular, $\CM_{\rb}$ is essentially surjective on objects. Therefore, $\CM_{\rb}$ and $\CJ_{\rb}$ are both equivalences of quantaloids.
\end{proof}


\subsection{Chu correspondences} \label{ChuCor}

A \emph{$\CQ$-typed set} $\sA$ consists of a set $\sA_0$ and a type map $t:\sA_0\to\ob\CQ$. The category of $\CQ$-typed sets and type-preserving maps is exactly the slice category $\Set/\ob\CQ$. Each $\CQ$-typed set $\sA$ may be viewed as a \emph{discrete} $\CQ$-category with
$$\sA(x,y)=\begin{cases}
1_{tx}, &\text{if}\ x=y;\\
\bot_{tx,ty}, &\text{otherwise}.
\end{cases}$$
Type-preserving maps then become $\CQ$-functors between discrete $\CQ$-categories, making $\Set/\ob\CQ$ a full coreflective subcategory of $\QCat$, with the coreflector $|\text{-}|:\QCat\to\Set/\ob\CQ$ sending each $\CQ$-category $\bbA$ to its underlying $\CQ$-typed set $|\bbA|$.

A \emph{$\CQ$-matrix} (also \emph{$\CQ$-relation}) \cite{Betti1983,Heymans2010} $\phi:\sA\oto\sB$ between $\CQ$-typed sets is exactly a $\CQ$-distributor between discrete $\CQ$-categories. The category $\QMat$ of $\CQ$-typed sets and $\CQ$-matrices is clearly a full subquantaloid of $\QDist$, with $\sA:\sA\oto\sA$ playing as the identity $\CQ$-matrix on each $\CQ$-typed set $\sA$. 

\begin{prop} \label{Q_rel_dist}
For $\CQ$-categories $\bbA$, $\bbB$ and $\CQ$-matrix $\phi:|\bbA|\oto|\bbB|$, the following statements are equivalent:
\begin{itemize}
\item[\rm (i)] $\phi:\bbA\oto\bbB$ is a $\CQ$-distributor.
\item[\rm (ii)] $\phi\circ\bbA\leq\phi$ and $\bbB\circ\phi\leq\phi$.
\item[\rm (iii)] $\bbA\leq\phi\rda\phi$ and $\bbB\leq\phi\lda\phi$.
\end{itemize}
\end{prop}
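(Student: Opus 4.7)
The plan is to recognize that the defining inequality of a $\CQ$-distributor, written out pointwise, is just the single $\CQ$-matrix inequality $\bbB\circ\phi\circ\bbA\leq\phi$, and then to trade off between that combined form and the two separate inequalities in (ii) using monotonicity, and between (ii) and (iii) using the adjunctions (\ref{circ_dv_lda}) and (\ref{circ_dv_rda}).

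First, I would observe that by the formula for composition in $\QDist$,
\[
(\bbB\circ\phi\circ\bbA)(x',y')=\bv_{x\in\bbA_0}\bv_{y\in\bbB_0}\bbB(y,y')\circ\phi(x,y)\circ\bbA(x',x),
\]
so (i) is equivalent to $\bbB\circ\phi\circ\bbA\leq\phi$. The inequalities in (ii) are clearly necessary, since $\phi\circ\bbA\leq\bbB\circ\phi\circ\bbA\leq\phi$ and $\bbB\circ\phi\leq\bbB\circ\phi\circ\bbA\leq\phi$ — here I use that the identity $\CQ$-matrix on $|\bbA|$ (resp.\ $|\bbB|$) is pointwise below $\bbA$ (resp.\ $\bbB$), a consequence of $1_{tx}\leq\bbA(x,x)$, together with the monotonicity of composition in the quantaloid $\QMat$.

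Second, for the converse direction (ii)$\Rightarrow$(i), the key remark is that the reverse inequalities $\phi\leq\phi\circ\bbA$ and $\phi\leq\bbB\circ\phi$ hold automatically: from $1_{tx'}\leq\bbA(x',x')$ we get
\[
(\phi\circ\bbA)(x',y')\geq\phi(x',y')\circ\bbA(x',x')\geq\phi(x',y')\circ 1_{tx'}=\phi(x',y'),
\]
and dually for $\bbB\circ\phi$. Therefore (ii) actually says $\phi\circ\bbA=\phi$ and $\bbB\circ\phi=\phi$, whence $\bbB\circ\phi\circ\bbA=\bbB\circ\phi=\phi$, giving (i).

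Third, for (ii)$\iff$(iii), I apply (\ref{composition_implication_adjoint}) directly. Taking $g:=\phi$, $f:=\bbA$, $h:=\phi$ in $g\circ f\leq h\iff f\leq g\rda h$ yields $\phi\circ\bbA\leq\phi\iff\bbA\leq\phi\rda\phi$, and taking $g:=\bbB$, $f:=\phi$, $h:=\phi$ in $g\circ f\leq h\iff g\leq h\lda f$ yields $\bbB\circ\phi\leq\phi\iff\bbB\leq\phi\lda\phi$. Together these give the equivalence with (iii). There is no real obstacle here — the only thing to be careful about is keeping the sides of $\lda$ and $\rda$ straight in the direction of composition being adjointed.
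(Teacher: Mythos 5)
Your proof is correct and is exactly the routine verification the paper omits (the proposition is stated without proof): rewrite the distributor axiom as the single matrix inequality $\bbB\circ\phi\circ\bbA\leq\phi$, use $1_{tx}\leq\bbA(x,x)$ and $1_{ty}\leq\bbB(y,y)$ to pass back and forth to the two one-sided inequalities in (ii), and transpose to (iii) via the adjunctions (\ref{composition_implication_adjoint}). No gaps.
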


In the case that $\CQ$ is the two-element Boolean algebra ${\bf 2}$, $\QMat$ is exactly the quantaloid $\Rel$ of sets and binary relations (see Example \ref{ChuConRel}). In formal concept analysis (see the next section for more), a \emph{formal context} is a triple $(A,B,R)$, where $A,B$ are sets and $R\subseteq A\times B$ is a relation. \emph{Chu correspondences} between formal contexts, first introduced by Mori \cite{Mori2008}, are essentially \emph{closed} Chu connections (defined above Proposition \ref{siuv}) in the quantaloid $\Rel$, and thus can be extended to general $\CQ$-matrices:

\begin{defn}[Mori \cite{Mori2008} for the case $\CQ={\bf 2}$]
A \emph{Chu correspondence} is a closed Chu connection $(\zeta,\eta):(\phi:\sA\oto\sB)\to(\psi:\sA'\oto\sB')$ between $\CQ$-matrices.
\end{defn}

Careful readers may have noticed that our definition of Chu correspondences here deviates a little bit from the original \cite[Definition 2]{Mori2008} for the case $\CQ={\bf 2}$, where $\eta$ is required to be a $\CQ$-matrix from $\sB'$ to $\sB$. In fact, in the case $\CQ={\bf 2}$, the dual of a $\CQ$-matrix from $\sB'$ to $\sB$ is a $\CQ$-matrix from $\sB$ to $\sB'$ (since $\CQ=\CQ^{\op}$), thus the direction of a $\CQ$-matrix is not important. So, our definition of Chu correspondences is essentially the same as that of Mori in the case $\CQ={\bf 2}$. It is in the general setting that  the direction of the involved $\CQ$-matrices matters.

The category of $\CQ$-matrices and Chu correspondences is, by definition, the quantaloid $\BB(\QMat)$, which is a full subquantaloid of $\BB(\QDist)$.

Denoting by $|\phi|:|\bbA|\oto|\bbB|$ for the underlying $\CQ$-matrix of a $\CQ$-distributor $\phi:\bbA\oto\bbB$, we point out an important fact of $\CM\phi$:

\begin{lem} \label{Mphi_invariant}
$\CM\phi=\CM|\phi|$.
\end{lem}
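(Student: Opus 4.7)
The plan is to exploit the fact that, although the ambient presheaf categories $\PA$ and $\CP|\bbA|$ differ (presheaves on $\bbA$ being those $\CQ$-matrices $\mu:|\bbA|\oto *_X$ satisfying $\mu\circ\bbA\leq\mu$), the operator $\dphi\uphi$ is described by exactly the same pointwise formulas in both cases:
\[
(\uphi\mu)(y)=\bw_{x\in\bbA_0}\phi(x,y)\lda\mu(x),\qquad (\dphi\lam)(x)=\bw_{y\in\bbB_0}\lam(y)\rda\phi(x,y).
\]
In particular, viewing $\phi$ as a $\CQ$-matrix $|\phi|:|\bbA|\oto|\bbB|$, the operators $|\phi|^{\da}|\phi|_{\ua}:\CP|\bbA|\to\CP|\bbA|$ and $\dphi\uphi:\PA\to\PA$ agree on any element of $\PA\subseteq\CP|\bbA|$. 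Consequently $\CM\phi\subseteq\CM|\phi|$, and their hom-arrows agree since in both cases the hom-arrow between $\mu,\mu'$ is $\mu'\lda\mu$.

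For the reverse inclusion, the central step is to check that any fixed point of $|\phi|^{\da}|\phi|_{\ua}$ in $\CP|\bbA|$ is automatically a presheaf on $\bbA$, that is, $\mu\circ\bbA\leq\mu$. Given $\mu=(\phi\lda\mu)\rda\phi$ and $x,x'\in\bbA_0$, the computation goes as follows: for each $y\in\bbB_0$,
\[
\bigl(((\phi\lda\mu)(y))\rda\phi(x,y)\bigr)\circ\bbA(x',x)\leq((\phi\lda\mu)(y))\rda\bigl(\phi(x,y)\circ\bbA(x',x)\bigr)\leq((\phi\lda\mu)(y))\rda\phi(x',y),
\]
using the adjunction property $(a\rda b)\circ f\leq a\rda(b\circ f)$ together with the fact that $\phi:\bbA\oto\bbB$ satisfies $\phi\circ\bbA\leq\phi$ (Proposition \ref{Q_rel_dist}). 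Since composition with $\bbA(x',x)$ on the right preserves order, one then takes the meet over $y$ to obtain $\mu(x)\circ\bbA(x',x)\leq\mu(x')$. Hence such a fixed point lies in $\PA$, and is therefore a fixed point of $\dphi\uphi$ in the sense of $\CM\phi$.

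Combining the two inclusions yields $\CM\phi=\CM|\phi|$ as sets of presheaves, and the hom-arrow formula transfers verbatim to give the equality of $\CQ$-categories. The only real obstacle is the computation above showing that fixed points are automatically compatible with the hom-arrows of $\bbA$; everything else is a direct comparison of formulas, as the definitions of $\uphi$ and $\dphi$ do not appeal to the enriched structure on $\bbA$ or $\bbB$ beyond what is already encoded in $\phi$ itself.
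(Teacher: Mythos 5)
Your proof is correct and follows essentially the same route as the paper: both reduce the statement to showing that every fixed point of $|\phi|^{\da}|\phi|_{\ua}$ automatically satisfies $\mu\circ\bbA\leq\mu$ (hence lies in $\PA$), using $\phi\circ\bbA\leq\phi$ together with the adjunction property of $\rda$. The only difference is presentational: you verify this pointwise over $x,x'\in\bbA_0$ and $y\in\bbB_0$, whereas the paper carries out the same estimate as a one-line computation in the quantaloid $\QDist$ via $\bbA\leq\phi\rda\phi$ and $(\lam\rda\phi)\circ(\phi\rda\phi)\leq\lam\rda\phi$.
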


\begin{proof}
It suffices to show that $\mu\in\CM|\phi|$ implies $\mu\in\PA$ for any $\mu\in\CP|\bbA|$. Indeed,
\begin{align*}
\mu\circ\bbA&=((|\phi|\lda\mu)\rda|\phi|)\circ\bbA&(\mu\in\CM|\phi|)\\
&\leq((|\phi|\lda\mu)\rda|\phi|)\circ(|\phi|\rda|\phi|)&(\text{Proposition \ref{Q_rel_dist}(iii)})\\
&\leq(|\phi|\lda\mu)\rda|\phi|\\
&=\mu.&(\mu\in\CM|\phi|)
\end{align*}
Thus by Proposition \ref{Q_rel_dist}(ii) one has $\mu\in\PA$.
\end{proof}

The above lemma shows that $\CM\phi$ is independent of the $\CQ$-categorical structures of the domain and codomain of $\phi$.

\begin{thm}\label{main result 3}
$\BB(\QMat)$ and $\QCCat$ are dually equivalent quantaloids. Thus one has equivalences of quantaloids
$$\BB(\QDist)^{\op}\simeq\BB(\QMat)^{\op}\simeq\QCCat.$$
\end{thm}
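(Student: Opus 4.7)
The plan is to show that the dual equivalence $\CM_{\rb}:\BB(\QDist)^{\op}\to\QCCat$ from Theorem \ref{main} restricts to a dual equivalence on the full subquantaloid $\BB(\QMat)^{\op}\hookrightarrow\BB(\QDist)^{\op}$. Once that is established, the chain of equivalences in the statement follows by composition.

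First I would verify that $\BB(\QMat)$ sits inside $\BB(\QDist)$ as a full subquantaloid. Since $\QMat$ is, by definition, the full subquantaloid of $\QDist$ spanned by discrete $\CQ$-categories, the left and right implications of $\CQ$-matrices computed inside $\QMat$ agree with those computed inside $\QDist$. Consequently, for $\CQ$-matrices $\phi:\sA\oto\sB$ and $\psi:\sA'\oto\sB'$, a pair of $\CQ$-matrices $(\zeta,\eta)$ is a Chu connection $\phi\to\psi$ in $\QMat$ exactly when it is one in $\QDist$, and the congruence $\sim$ on $\ChuCon(\QMat)$ is the restriction of the corresponding congruence on $\ChuCon(\QDist)$. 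Therefore $\BB(\QMat)\hookrightarrow\BB(\QDist)$ is a full subquantaloid, and the restriction
$$\CM_{\rb}':=\CM_{\rb}|_{\BB(\QMat)^{\op}}:\BB(\QMat)^{\op}\longrightarrow\QCCat$$
inherits fully faithfulness from $\CM_{\rb}$.

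The remaining step is essential surjectivity of $\CM_{\rb}'$, and this is precisely where Lemma \ref{Mphi_invariant} is needed: the back-functor $\CJ_{\rb}$ of Theorem \ref{main} sends a skeletal complete $\CQ$-category $\bbC$ to its identity $\CQ$-distributor $\bbC:\bbC\oto\bbC$, which is generally not a $\CQ$-matrix, so one cannot simply restrict $\CJ_{\rb}$ along the inclusion. However, by Lemma \ref{Mphi_invariant}, the underlying $\CQ$-matrix $|\bbC|:|\bbC|\oto|\bbC|$ satisfies
$$\CM(\bbC:\bbC\oto\bbC)=\CM|\bbC|,$$
and combining with the natural isomorphism $\bbC\cong\CM_{\rb}\CJ_{\rb}\bbC$ from the proof of Theorem \ref{main} yields $\bbC\cong\CM_{\rb}'|\bbC|$. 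Hence every object of $\QCCat$ lies in the essential image of $\CM_{\rb}'$, so $\CM_{\rb}'$ is a dual equivalence of quantaloids.

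Finally, transitivity of categorical equivalence gives $\BB(\QDist)^{\op}\simeq\QCCat\simeq\BB(\QMat)^{\op}$, completing the proof. The only delicate point I anticipate is the first step: verifying that passing to $\BB(-)$ is compatible with full subquantaloid inclusion, which amounts to checking that all constructions (Chu connections, their composition, joins, and the back-diagonal congruence) are defined entirely in terms of compositions, joins and implications of arrows, and hence are preserved under the fully faithful embedding $\QMat\hookrightarrow\QDist$. Everything else is a direct appeal to Theorem \ref{main} and Lemma \ref{Mphi_invariant}.
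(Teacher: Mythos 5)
Your proposal is correct and follows essentially the same route as the paper: both arguments observe that $\BB(\QMat)$ is a full subquantaloid of $\BB(\QDist)$ (so fully faithfulness of the restricted functor is inherited from $\CM_{\rb}$) and then invoke Lemma \ref{Mphi_invariant} to get essential surjectivity, the paper via $\CM\phi=\CM|\phi|$ for an arbitrary $\phi$ and you via the identity distributor of a given skeletal complete $\CQ$-category. The extra care you take in checking that Chu connections, the congruence $\sim$, and the implications are computed the same way in $\QMat$ as in $\QDist$ is exactly what the paper compresses into the phrase ``it follows from the definition of back diagonals.''
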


\begin{proof}
Since $\QMat$ is a full subquantaloid of $\QDist$, it follows from the definition of back diagonals that $\BB(\QMat)$ is a full subquantaloid of $\BB(\QDist)$. The above lemma ensures that the composite of $\CM_{\rb}$ with the inclusion $\BB(\QMat)^{\op}\ \to/^(->/\BB(\QDist)^{\op}$ is fully faithful and essentially surjective on objects, hence $\BB(\QMat)$ and $\QCCat$ are dually equivalent.
\end{proof}

In the case $\CQ={\bf 2}$, since $\Sup$ (=${\bf 2}\text{-}\CCat$) is self-dual, it follows that $\BB({\bf 2}\text{-}\Mat)=\BB(\Rel)$ itself is equivalent to $\Sup$. This is the content of the main result in \cite{Mori2008}:

\begin{cor}[Mori \cite{Mori2008}]
The category  of formal contexts and Chu correspondences is equivalent to the category   of complete lattices and join-preserving maps.
\end{cor}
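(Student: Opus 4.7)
The plan is to specialize Theorem~\ref{main result 3} to the case $\CQ = \mathbf{2}$ (the two-element Boolean algebra) and then exploit the self-duality of $\Sup$ to eliminate the opposite.

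First, I would identify the relevant category on each side of the equivalence. On the ``syntactic'' side, a $\mathbf{2}$-typed set is just a set, a $\mathbf{2}$-matrix $\phi \colon \sA \oto \sB$ is precisely a relation $R \subseteq A \times B$, i.e. a formal context $(A,B,R)$, and a Chu correspondence in the sense of Mori is a closed Chu connection in $\Rel = \mathbf{2}\text{-}\Mat$ (as already discussed in Subsection~\ref{ChuCor}). Hence $\BB(\mathbf{2}\text{-}\Mat) = \BB(\Rel)$ is exactly the category of formal contexts and Chu correspondences. On the ``semantic'' side, $\mathbf{2}\text{-}\CCat = \QCCat$ for $\CQ = \mathbf{2}$ is precisely $\Sup$, the category of complete lattices and join-preserving maps.

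Next, applying Theorem~\ref{main result 3} with $\CQ = \mathbf{2}$ yields
\[
\BB(\Rel)^{\op} \;\simeq\; \mathbf{2}\text{-}\CCat \;=\; \Sup.
\]
To drop the $(-)^{\op}$, I would invoke the well-known self-duality of $\Sup$: sending a complete lattice to itself and a join-preserving map $f \colon L \to M$ to the opposite of its right adjoint $g \colon M \to L$ (which is meet-preserving, hence its opposite $g^{\op} \colon M^{\op} \to L^{\op}$ is join-preserving after identifying $L \cong L^{\op}$ via the involution sending joins to meets) gives an equivalence $\Sup^{\op} \simeq \Sup$. Combining, $\BB(\Rel) \simeq \Sup^{\op} \simeq \Sup$.

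The main obstacle is essentially bookkeeping rather than substance: one must check that the concept of Chu correspondence used here really matches Mori's original definition, and that the resulting equivalence agrees on objects with the classical assignment sending a formal context to its concept lattice. The former is handled by the remark following the definition of Chu correspondence (duality in $\CQ = \mathbf{2}$ makes the direction of $\eta$ immaterial), and the latter follows from the fact that $\CM_{\rb}$ is, on objects, the map $\phi \mapsto \CM\phi$, which for $\CQ = \mathbf{2}$ reduces to the standard concept lattice construction via the Isbell adjunction $\uphi \dv \dphi$.
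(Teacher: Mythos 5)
Your proposal is correct and follows exactly the paper's own route: specialize Theorem \ref{main result 3} to $\CQ=\mathbf{2}$ to get $\BB(\Rel)^{\op}\simeq\Sup$, then use the self-duality of $\Sup$ to remove the opposite. The extra bookkeeping you mention (matching Mori's definition and the concept-lattice construction) is already handled in Subsection \ref{ChuCor} of the paper.
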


\subsection{Dualization}

The isomorphism
$$(-)^{\op}:\QDist\to(\CQ^{\op}\text{-}\Dist)^{\op}$$
in Remark \ref{Qcat_dual} induces an isomorphism of quantaloids
\begin{equation} \label{ChuCon_QDist_op}
(-)^{\op}:\ChuCon(\QDist)\to\ChuCon(\CQ^{\op}\text{-}\Dist)^{\op}
\end{equation}
that sends a Chu connection $(\zeta,\eta):\phi\to\psi$ to its dual $(\eta^{\op},\zeta^{\op}):\psi^{\op}\to\phi^{\op}$.
\begin{equation} \label{zeta_eta_op}
\bfig
\square<600,500>[\bbA`\bbA'`\bbB`\bbB';\zeta`\phi`\psi`\eta]
\place(0,250)[\circ] \place(600,250)[\circ] \place(300,0)[\circ] \place(300,500)[\circ]
\place(1050,250)[\mapsto]
\square(1500,0)<700,500>[(\bbB')^{\op}`\bbB^{\op}`(\bbA')^{\op}`\bbA^{\op};\eta^{\op}`\psi^{\op}`\phi^{\op}`\zeta^{\op}]
\place(1500,250)[\circ] \place(2200,250)[\circ] \place(1850,0)[\circ] \place(1850,500)[\circ]
\efig
\end{equation}

The functor $\CM:\ChuCon(\QDist)^{\op}\to\QCCat$ preserves the dualization of Chu connections up to a natural isomorphism as shown below.

\begin{prop} \label{M_preserve_dual}
For each $\CQ$-distributor $\phi:\bbA\oto\bbB$, $\CM\phi^{\op}$ is isomorphic to $(\CM\phi)^{\op}$. Furthermore, the diagram
$$\bfig
\square<1500,500>[\ChuCon(\QDist)^{\op}`\ChuCon(\CQ^{\op}\text{-}\Dist)`\QCCat`(\CQ^{\op}\text{-}\CCat)^{\op};(-)^{\op}`\CM`\CM^{\op}`\dv{}^{\op}]
\efig$$
commutes up to a natural isomorphism, where $\dv{}^{\op}:\QCCat\to(\CQ^{\op}\text{-}\CCat)^{\op}$ is the isomorphism given in Proposition \ref{QCCat_QopCCatop_iso}.
\end{prop}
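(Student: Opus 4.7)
The plan has two steps: first, construct an object-level isomorphism $\Phi_{\phi}\colon\CM\phi^{\op}\cong(\CM\phi)^{\op}$; then verify the naturality of the family $\{\Phi_{\phi}\}_{\phi}$ with respect to Chu connections.

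\textbf{Object level.} Using the quantaloid isomorphism $(-)^{\op}\colon\QDist\to(\CQ^{\op}\text{-}\Dist)^{\op}$ of Remark \ref{Qcat_dual}, I identify $\CP\bbB^{\op}$ with $(\PdB)^{\op}$ as $\CQ^{\op}$-categories via $\lam\mapsto\lam^{\op}$, and symmetrically $\CP^{\dag}\bbA^{\op}$ with $(\PA)^{\op}$. A short calculation with the implications in $\CQ^{\op}$ (namely $\xi^{\op}\lda_{\CQ^{\op}}\eta^{\op}=(\eta\rda\xi)^{\op}$ and $\psi^{\op}\rda_{\CQ^{\op}}\xi^{\op}=(\xi\lda\psi)^{\op}$) shows that, under these identifications, $(\phi^{\op})_{\ua}$ corresponds to $\dphi$ and $(\phi^{\op})^{\da}$ corresponds to $\uphi$. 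Hence $\CM\phi^{\op}=\Fix((\phi^{\op})^{\da}(\phi^{\op})_{\ua})$ is precisely $\Fix(\uphi\dphi)\subseteq\PdB$ equipped with the opposite $\CQ^{\op}$-hom. The triangle identities for the adjunction $\uphi\dv\dphi$ in the order-enriched setting give $\dphi\uphi\dphi=\dphi$ and $\uphi\dphi\uphi=\uphi$, so $\dphi$ restricts to a $\CQ$-categorical isomorphism $\Fix(\uphi\dphi)\to\Fix(\dphi\uphi)=\CM\phi$ with inverse $\uphi|_{\CM\phi}$. Taking opposites yields the desired isomorphism $\Phi_{\phi}\colon\CM\phi^{\op}\cong(\CM\phi)^{\op}$.

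\textbf{Naturality.} Given a Chu connection $(\zeta,\eta)\colon\phi\to\psi$ in $\QDist$, its image under $(-)^{\op}$ is $(\eta^{\op},\zeta^{\op})\colon\psi^{\op}\to\phi^{\op}$ by (\ref{zeta_eta_op}). Applying the formula from Proposition \ref{M_functor} to this dual Chu connection and translating through the identifications above, $\CM(\eta^{\op},\zeta^{\op})\colon\CM\phi^{\op}\to\CM\psi^{\op}$ corresponds to the restriction of $\upsi\dpsi\eta^{\dag}\colon\PdB\to\PdB'$ to $\Fix(\uphi\dphi)$. On the other side, Step 1 of the proof of Proposition \ref{M_functor} identifies the right adjoint of $\CM(\zeta,\eta)$ as $\zeta_{*}|_{\CM\phi}\colon\CM\phi\to\CM\psi$, so $\dv{}^{\op}$ sends $\CM(\zeta,\eta)$ to $(\zeta_{*}|_{\CM\phi})^{\op}$. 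Chasing an element $\lam\in\Fix(\uphi\dphi)$ around the naturality square and using $\dpsi\upsi\dpsi=\dpsi$, the commutativity reduces to the single identity
$$\dpsi\eta^{\dag}=\zeta_{*}\dphi\colon\PdB\to\PA',$$
which is the right-adjoint transpose of $\uphi\zeta^{*}=\eta_{\dag}\upsi$ from Proposition \ref{Chu_con_condition}(ii), obtained by taking right adjoints of both sides under $\zeta^{*}\dv\zeta_{*}$ and $\eta_{\dag}\dv\eta^{\dag}$.

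\textbf{Main obstacle.} The principal difficulty is the bookkeeping: each use of $(-)^{\op}$—on the base quantaloid, on $\CQ$-categories, on $\CQ$-distributors, and on $\CQ$-functors—must be checked to interact coherently with the local operations $\lda$, $\rda$ and with both the Isbell and Kan adjunctions. Once these translations are set up via Proposition \ref{adjoint_arrow_calculation} and Remark \ref{Qcat_dual}, both the object isomorphism and the naturality square follow by formal manipulation.
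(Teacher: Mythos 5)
Your proposal is correct, and its overall architecture coincides with the paper's: both proofs realize $\CM\phi^{\op}$ as $\Fix(\uphi\dphi)$ (with opposite homs) inside $\PdB$, use the restriction of the Isbell adjunction $\uphi\dv\dphi$ to get mutually inverse isomorphisms between $\CM\phi=\Fix(\dphi\uphi)$ and $\Fix(\uphi\dphi)$, and identify $\dv{}^{\op}\cdot\CM(\zeta,\eta)$ with $\zeta_*^{\op}$ via Step 1 of the proof of Proposition \ref{M_functor}. Where you genuinely diverge is in the verification of naturality. The paper establishes the required identity by an explicit element-wise residuation computation: for $\mu\in\CM\phi$ it unwinds $\upsi\zeta_*\mu=\psi\lda(((\phi\lda\mu)\rda\phi)\lda\zeta)=\cdots=\upsi\dpsi\eta^{\dag}\uphi\mu$, using $\phi\lda\zeta=\eta\rda\psi$ and the calculus of Proposition \ref{adjoint_arrow_calculation}. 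You instead extract the global identity $\zeta_*\dphi=\dpsi\eta^{\dag}:\PdB\to\PA'$ by taking right adjoints on both sides of $\uphi\zeta^*=\eta_{\dag}\upsi$ (Proposition \ref{Chu_con_condition}(ii)), invoking uniqueness of right adjoints between the skeletal $\CQ$-categories involved, and then reduce the naturality square to this identity using $\dpsi\upsi\dpsi=\dpsi$. This is a cleaner, computation-free derivation of the key step; its only cost is that the square you chase is written at the level of $\Fix(\uphi\dphi)$ rather than $\CM\phi$, so one must note (as you implicitly do) that $\dphi$ carries $\Fix(\uphi\dphi)$ into $\CM\phi$ and that $\zeta_*$ restricts accordingly. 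Both arguments ultimately rest on the same fact, since the paper's computed identity $\upsi\zeta_*\mu=\upsi\dpsi\eta^{\dag}\uphi\mu$ is exactly your adjoint-transposed identity pre- and post-composed with $\uphi$ and $\upsi$.
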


\begin{proof}
First, it is not difficult to verify that
$$\uphi:\CM\phi\to\Fix(\uphi\dphi)$$
is an isomorphism of $\CQ$-categories (with $\dphi:\Fix(\uphi\dphi)\to\CM\phi$ as its inverse), and so is
$$(-)^{\op}:\Fix(\uphi\dphi)\to(\CM\phi^{\op})^{\op},\quad(\lam:*_X\oto\bbB)\mapsto(\lam^{\op}:\bbB^{\op}\oto *_X).$$
Thus one soon has the isomorphism of $\CM\phi^{\op}$ and $(\CM\phi)^{\op}$, which are respectively the images of $\phi$ under $\CM^{\op}\cdot(-)^{\op}$ and $\dv{}^{\op}\cdot\CM$.

Second, we show that $\al_{\phi}:=(-)^{\op}\cdot\uphi^{\op}$ gives rise to a natural isomorphism $\al$ from $\dv{}^{\op}\cdot\CM$ to $\CM^{\op}\cdot(-)^{\op}$. For the naturality we must check the commutativity of the diagram
\begin{equation} \label{dualization_natural}
\bfig
\square/->`->``->/<1200,500>[(\CM\phi)^{\op}`\Fix(\uphi\dphi)^{\op}`(\CM\psi)^{\op}`\Fix(\upsi\dpsi)^{\op};
\uphi^{\op}`\dv{}^{\op}\cdot\CM(\zeta,\eta)``\upsi^{\op}]
\square(1200,0)/->``->`->/<1200,500>[\Fix(\uphi\dphi)^{\op}`\CM\phi^{\op}`\Fix(\upsi\dpsi)^{\op}`\CM\psi^{\op};
(-)^{\op}``\CM(\eta^{\op},\zeta^{\op})`(-)^{\op}]
\efig
\end{equation}
for all Chu connections $(\zeta,\eta):(\phi:\bbA\oto\bbB)\to(\psi:\bbA'\oto\bbB')$. From Step 1 in the proof of Proposition \ref{M_functor} one already knows $\CM(\zeta,\eta)\dv\zeta_*:\CM\phi\to\CM\psi$, thus
$$\dv{}^{\op}\cdot\CM(\zeta,\eta)=\zeta_*^{\op}:(\CM\phi)^{\op}\to(\CM\psi)^{\op}.$$
Note also that for all $\mu\in\CM\phi$,
\begin{align*}
\upsi\zeta_*\mu&=\psi\lda(\mu\lda\zeta)\\
&=\psi\lda(((\phi\lda\mu)\rda\phi)\lda\zeta)&(\mu\in\CM\phi)\\
&=\psi\lda((\phi\lda\mu)\rda(\phi\lda\zeta))\\
&=\psi\lda((\phi\lda\mu)\rda(\eta\rda\psi))&((\zeta,\eta)\ \text{is a Chu connection})\\
&=\psi\lda((\eta\circ(\phi\lda\mu))\rda\psi))\\
&=\upsi\dpsi\eta^{\dag}\uphi\mu.
\end{align*}
Hence
\begin{align*}
\CM(\eta^{\op},\zeta^{\op})(\uphi\mu)^{\op}&=(\psi^{\op})^{\da}(\psi^{\op})_{\ua}(\eta^{\op})^*(\uphi\mu)^{\op}\\
&=(\psi^{\op}\lda((\uphi\mu)^{\op}\circ\eta^{\op}))\rda\psi^{\op}\\
&=(\psi\lda((\eta\circ\uphi\mu)\rda\psi))^{\op}\\
&=(\upsi\dpsi\eta^{\dag}\uphi\mu)^{\op}\\
&=(\upsi\zeta_*\mu)^{\op},
\end{align*}
indicating the commutativity of the diagram (\ref{dualization_natural}).
\end{proof}

It is clear that the image $(\eta^{\op},\zeta^{\op}):\psi^{\op}\to\phi^{\op}$ under the assignment (\ref{zeta_eta_op}) is a closed Chu connection whenever so is $(\zeta,\eta):\phi\to\psi$, thus it also induces an isomorphism of quantaloids
$$(-)^{\op}:\BB(\QDist)\to\BB(\CQ^{\op}\text{-}\Dist)^{\op}.$$
Similarly, the functor $\CM_{\rb}:\BB(\QDist)^{\op}\to\QCCat$ also preserves the dualization of back diagonals up to a natural isomorphism, and we do not bother spelling it out here.

\section{Reduction of $\CQ$-distributors} \label{Reduction}

Formal concept analysis \cite{Davey2002,Ganter1999} is an important tool in data analysis. A relation $R\subseteq A\times B$ between sets is called a \emph{formal context} in this theory and usually written as a triple $(A,B,R)$, with $A$ interpreted as the set of \emph{objects}, $B$ the set of \emph{properties}, and $(x,y)\in R$ reads as the object $x$ has property $y$. The Galois connection
$$\uR\dv\dR:({\bf 2}^B)^{\op}\to{\bf 2}^A$$
presented in Example \ref{ChuConRel} plays a fundamental role in formal concept analysis. A pair $(U,V)\in{\bf 2}^A\times({\bf 2}^B)^{\op}$ is called a \emph{formal concept} if $U=\dR(V)$ and $V=\uR(U)$. Formal concepts of a formal context $(A,B,R)$ constitute a complete lattice with the order
$$(U_1,V_1)\leq(U_2,V_2)\iff U_1\subseteq U_2\quad\text{and}\quad V_2\subseteq V_1,$$
called the \emph{concept lattice} of the formal context $(A,B,R)$, which is isomorphic to $\CM R$ with $R$ considered as a ${\bf 2}$-distributor between sets equipped with the discrete order. In particular, if $R$ is a partial order on a set $A$, then $\CM R$ is the Dedekind-MacNeille completion of the partially ordered set $(A,R)$.

An important problem in the application of formal concept analysis is the \emph{reduction} of formal contexts. While dealing with a large quantity of data, one always wants to reduce the size of the set of objects and/or that of properties without affecting the structure of the concept lattice. Intuitively, given a formal context $(A,B,R)$, one wishes to find subsets $A'$, $B'$ of $A$, $B$, respectively, such that $\CM R_{A',B'}$ is isomorphic to $\CM R$, where $R_{A',B'}$ is the restriction of $R$ to $A'\times B'$. A closer look reveals that this intuition needs clarification. To see this, let $\bQ$ denote the set of rational numbers, consider the partially ordered sets $(\bQ,\leq)$ and $(\bQ\cap[0,1],\leq)$ (identified with the formal contexts $(\bQ,\bQ,\leq)$  and  $(\bQ\cap[0,1],\bQ\cap[0,1],\leq)$, respectively). The Dedekind-MacNeille completion of them are isomorphic (as lattices), but  it is counter-intuitive that $(\bQ\cap[0,1],\bQ\cap[0,1],\leq)$ is a reduct of $(\bQ,\bQ,\leq)$: too much information in  $(\bQ,\bQ,\leq)$ has been thrown away. So, a right step to a theory of reduction of formal contexts is to require that $\CM R$ and $\CM R_{A',B'}$ are not only isomorphic, but also isomorphic in a  canonical  way. In this section, we will employ Chu connections to establish a rigorous theory of reduction that is compatible with this intuition.

Note that for a small quantaloid $\CQ$, a $\CQ$-distributor $\phi:\bbA\oto\bbB$ between $\CQ$-categories may be thought of as a multi-typed and multi-valued relation that respects $\CQ$-categorical structures in its domain and codomain. Consequently, the induced Isbell adjunction
$$\uphi\dv\dphi:\PdB\to\PA$$
and its image $\CM\phi$ under $\CM:\ChuCon(\QDist)^{\op}\to\QCCat$ present a categorical version of formal concept analysis. Therefore, the theory of reduction of formal contexts will be established as a generalized version here, i.e., a theory of reduction of $\CQ$-distributors.

\subsection{Comparison $\CQ$-functors and reducts}

Before proceeding, we fix some notations. Given a $\CQ$-category $\bbA$, $\bbA'\subseteq\bbA$ indicates that $\bbA'$ is a $\CQ$-subcategory of $\bbA$, with hom-arrows inherited from $\bbA$. Correspondingly, $\bbA\setminus\bbA'$ denotes the complementary $\CQ$-subcategory of $\bbA'$ in $\bbA$. For a $\CQ$-distributor $\phi:\bbA\oto\bbB$ and $\bbA'\subseteq\bbA$, $\bbB'\subseteq \bbB$, we always write $I:\bbA'\to\bbA$ and $J:\bbB'\to\bbB$ for the inclusion $\CQ$-functors, and  $\phi_{\bbA',\bbB'}:\bbA'\oto\bbB'$ for the restriction of $\phi$ on $\bbA'$ and $\bbB'$.
 In particular, we  write $\mu_{\bbA'}$ (resp. $\lam_{\bbA'}$) for the restriction of $\mu:\bbA\oto *_{t\mu}$ (resp. $\lam:*_{t\lam}\oto\bbA$) on $\bbA'$.

For each $\CQ$-distributor $\phi:\bbA\oto\bbB$ and $\bbA'\subseteq\bbA$, $\bbB'\subseteq\bbB$,
it is easy to see that $(\bbA, J_{\nat}): \phi_{\bbA,\bbB'}\to\phi$ and $(I^{\nat},\bbB):\phi\to\phi_{\bbA',\bbB}$
are both Chu connections.
$$\bfig
\square<600,500>[\bbA`\bbA`\bbB'`\bbB; \bbA`\phi_{\bbA,\bbB'}`\phi`J_{\nat}]
\morphism(600,500)|b|<-600,-500>[\bbA`\bbB';]
\place(0,250)[\circ] \place(600,250)[\circ] \place(300,500)[\circ] \place(300,0)[\circ]\place(300,250)[\circ]
\square(1500,0)<600,500>[\bbA`\bbA'`\bbB`\bbB;I^{\nat}`\phi`\phi_{\bbA',\bbB}`\bbB]
\morphism(2100,500)|b|<-600,-500>[\bbA'`\bbB;]
\place(1500,250)[\circ] \place(2100,250)[\circ] \place(1800,500)[\circ] \place(1800,0)[\circ]\place(1800,250)[\circ]
\efig$$

Hence, we obtain  two left adjoint $\CQ$-functors:
\begin{align}
&\CM(\bbA,J_{\nat}):\CM\phi\to\CM\phi_{\bbA,\bbB'},\label{M_A_JBp}\\
&\CM(I^{\nat},\bbB):\CM\phi_{\bbA',\bbB}\to\CM\phi. \label{M_JAp_B}
\end{align}
Replacing $\bbA$ by $\bbA'$ in (\ref{M_A_JBp}) and $\bbB$ by $\bbB'$ in (\ref{M_JAp_B}) we obtain another two left adjoint $\CQ$-functors:
\begin{align*}
&\CM(\bbA', J_{\nat}): \CM\phi_{\bbA',\bbB}\to\CM\phi_{\bbA',\bbB'}, \\
&\CM(I^{\nat},\bbB'): \CM\phi_{\bbA',\bbB'}\to\CM\phi_{\bbA,\bbB'}.
\end{align*}

Composing the right adjoint
of $\CM(I^{\nat},\bbB)$ with $\CM(\bbA', J_{\nat})$ gives a $\CQ$-functor
$$R_1: \CM\phi\to\CM\phi_{\bbA',\bbB} \to\CM\phi_{\bbA',\bbB'};$$
composing $\CM(\bbA, J_{\nat})$ with the right adjoint of $\CM(I^{\nat},\bbB')$ gives a $\CQ$-functor
$$R_2:\CM\phi\to\CM\phi_{\bbA,\bbB'} \to \CM\phi_{\bbA',\bbB'};$$
composing the right adjoint of $\CM(\bbA', J_{\nat})$ with   $\CM(I^{\nat},\bbB)$ gives a $\CQ$-functor
$$E_1:\CM\phi_{\bbA',\bbB'}\to\CM\phi_{\bbA',\bbB} \to\CM\phi;$$
and finally, composing $\CM(I^{\nat},\bbB')$ with the right adjoint of $\CM(\bbA, J_{\nat})$ gives a $\CQ$-functor
$$E_2:\CM\phi_{\bbA',\bbB'}\to\CM\phi_{\bbA,\bbB'} \to\CM\phi.$$

The four $\CQ$-functors $R_1$, $R_2$, $E_1$, $E_2$ arise in a natural way, so, they can be employed to play the role of ``comparison $\CQ$-functors'' between $\CM\phi$ and $\CM\phi_{\bbA',\bbB'}$. The following conclusion is of crucial importance in this regard.

\begin{thm}\label{one for all}
Given a $\CQ$-distributor $\phi:\bbA\oto\bbB$   and $\bbA'\subseteq\bbA, \bbB'\subseteq \bbB$, if one of the $\CQ$-functors  $R_1$, $R_2$, $E_1$, $E_2$ is an isomorphism in $\QCat$, then so are the other three. 
\end{thm}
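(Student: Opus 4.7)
My plan is to exploit the factorizations $R_1=a'\bar b$, $R_2=\bar b'a$, $E_1=b\bar a'$, $E_2=\bar a b'$, where I abbreviate $a=\CM(\bbA,J_{\nat})$, $b=\CM(I^{\nat},\bbB)$, $a'=\CM(\bbA',J_{\nat})$, $b'=\CM(I^{\nat},\bbB')$ with right adjoints $\bar a,\bar b,\bar a',\bar b'$. The first step is to make the adjoint pairs concrete: restricting the codomain of a $\CQ$-distributor only coarsens its Isbell closure, so $\CM\phi_{\bbA,\bbB'}\subseteq\CM\phi$ inside $\CP\bbA$ and $\CM\phi_{\bbA',\bbB'}\subseteq\CM\phi_{\bbA',\bbB}$ inside $\CP\bbA'$; under these identifications $\bar a$ and $\bar a'$ are literal inclusions (so $a\bar a=1$ and $a'\bar a'=1$ automatically) while $\bar b$ and $\bar b'$ are the restriction-to-$\bbA'$ functors. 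Next, from the Chu-composition identity $(I^{\nat},\bbB)\circ(\bbA,J_{\nat})=(\bbA',J_{\nat})\circ(I^{\nat},\bbB')=(I^{\nat},J_{\nat})$ in $\ChuCon(\QDist)$, applying $\CM$ (Proposition~\ref{M_functor}) yields the diagonal identity $ab=b'a'$ in $\QCCat$, whose right-adjoint counterpart is $\bar b\bar a=\bar a'\bar b'$.

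Using $a\bar a=a'\bar a'=1$, the diagonal identities, and the unit/counit inequalities for the other adjoint pairs, I would then compute all composites $R_iE_j$. A short calculation gives
\begin{equation*}
R_1E_1=a'(\bar b b)\bar a'\geq a'\bar a'=1,
\end{equation*}
and
\begin{equation*}
R_1E_2=(a'\bar a')(\bar b'b')=\bar b'b',\quad R_2E_1=(\bar b'b')(a'\bar a')=\bar b'b',\quad R_2E_2=\bar b'(a\bar a)b'=\bar b'b',
\end{equation*}
so each $R_iE_j$ dominates $1_{\CM\phi_{\bbA',\bbB'}}$, and three of them equal the idempotent $\CQ$-monad $\bar b'b'$ (idempotent because $\CM\phi_{\bbA',\bbB'}$ is skeletal). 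Symmetric computations bound the $E_jR_i$ from below in terms of the relevant (co)monads on $\CM\phi$.

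Now assume $R_1$ is iso with $2$-inverse $S$. Applying $S$ on the left to $R_1E_1\geq 1=R_1S$ gives $E_1\geq S$; and from $SR_1=1$ I post-compose with $b$ to obtain $Sa'(\bar b b)=b$, then use $\bar b b\geq 1$ to get $Sa'\leq b$, and finally $a'\bar a'=1$ to get $S\leq E_1$. Both bounds being one-sided, the equality $S=E_1$ is equivalent to $R_1E_1=1$, i.e.\ $a'(\bar b b)\bar a'=1$; I would establish this by arguing that bijectivity of $R_1=a'\bar b$ forces the idempotent $\CQ$-monad $\bar b b$ on $\CM\phi_{\bbA',\bbB}$ to fix the image of $\bar a'$, so that $E_1=S=R_1^{-1}$ and $E_1$ is iso. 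To propagate iso to $R_2$ and $E_2$, note $R_2=(\bar b'b')\circ R_1$ (from $R_2E_1=\bar b'b'$ and $E_1=R_1^{-1}$), so $R_2$ is iso iff $\bar b'b'=1_{\CM\phi_{\bbA',\bbB'}}$; combining $R_1E_2=\bar b'b'\geq 1$ with the pointwise inequality $E_1\leq E_2$ (inf over $\bbB$ versus over $\bbB'$) and $E_1=R_1^{-1}$ squeezes $\bar b'b'=R_1E_2\leq R_1E_1=1$, whence $R_2=R_1$, $E_2=E_1$, and all four comparison $\CQ$-functors are iso. The remaining three implications (starting from $R_2$, $E_1$, or $E_2$ iso) follow by running the same argument with the roles of $(a,b)$ and $(a',b')$ swapped, or by invoking the duality of Remark~\ref{Qcat_dual}, which interchanges $(I,\bbA)$ with $(J,\bbB)$ and correspondingly permutes the four comparison functors. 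The hardest step will be the order-theoretic squeezing that upgrades the one-sided inequalities $R_iE_j\geq 1$ to equalities—forcing the idempotent monads $\bar b b$ and $\bar b'b'$ to collapse to identities on the appropriate sub-$\CQ$-categories as soon as a single comparison is bijective—since this is where the injectivity and surjectivity of the assumed iso must interact essentially with the unit/counit structure of all four adjunctions.
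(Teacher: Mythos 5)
Your overall strategy --- compute the composites $R_iE_j$ and then propagate invertibility --- is the same as the paper's (Lemma \ref{RE_id}), and your structural ingredients are all correct: $a\bar a=a'\bar a'=1$ for the two reflections, the diagonal identity $ab=b'a'$ obtained by applying the functor $\CM$ to $(I^{\nat},\bbB)\circ(\bbA,J_{\nat})=(\bbA',J_{\nat})\circ(I^{\nat},\bbB')=(I^{\nat},J_{\nat})$, and its mate $\bar b\bar a=\bar a'\bar b'$. The gap is that you stop at the inequalities $R_1E_1\geq 1$ and $\bar b'b'\geq 1$ and then try to upgrade them to equalities using the hypothesis that one comparison functor is invertible. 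That upgrade is exactly where the argument breaks: (i) the ``squeeze'' $\bar b'b'=R_1E_2\leq R_1E_1=1$ is incompatible with your own (correct) observation that $E_1\leq E_2$, which yields $R_1E_1\leq R_1E_2$, i.e.\ the \emph{opposite} inequality; and (ii) the claim that bijectivity of $R_1=a'\bar b$ forces the monad $\bar bb$ to fix the image of $\bar a'$ is asserted but never proved --- you yourself flag it as the hardest step.

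The missing ingredient is that $b=\CM(I^{\nat},\bbB)$ and $b'=\CM(I^{\nat},\bbB')$ are \emph{fully faithful} (Lemma \ref{A_JB_Chu_connection_M}(2), applied to $\phi$ and to $\phi_{\bbA,\bbB'}$ respectively). Since all the $\CQ$-categories involved are skeletal, full faithfulness of a left adjoint gives $\bar bb=1$ and $\bar b'b'=1$ outright, with no hypothesis on any $R_i$ or $E_j$. Feeding this into your own factorizations gives $R_1E_1=a'(\bar bb)\bar a'=a'\bar a'=1$ and $R_1E_2=R_2E_1=R_2E_2=\bar b'b'=1$ unconditionally --- which is precisely the content of the paper's Lemma \ref{RE_id}, proved there by direct computation with Proposition \ref{subdistributor_inclusion} --- and then the theorem is immediate: once all four identities $R_iE_j=1$ hold, invertibility of any one of the four functors forces each of the others to be its two-sided inverse. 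So your formal bookkeeping can be salvaged into a proof that is arguably slicker than the paper's elementwise computation, but as written the central equalities are not established, and the squeezing argument intended to replace them runs in the wrong direction.
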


This theorem leads to the core definition of this section:

\begin{defn}
Given a $\CQ$-distributor $\phi:\bbA\oto\bbB$   and $\bbA'\subseteq\bbA$, $\bbB'\subseteq\bbB$, we say that the restriction $\phi_{\bbA',\bbB'}$ is a reduct of $\phi$ if one (hence each) of the $\CQ$-functors  $R_1$, $R_2$, $E_1$, $E_2$ is an isomorphism in $\QCat$.
\end{defn}

The aim of this subsection is to prove Theorem \ref{one for all}; the next subsection will present a sufficient and necessary condition for $\phi_{\bbA',\bbB'}$ to be a reduct of $\phi$.

 From now on throughout this section, $\phi$ is always assumed to be a $\CQ$-distributor $\bbA\oto\bbB$, while $\bbA'$, $\bbB'$ are $\CQ$-subcategories of $\bbA$, $\bbB$, with $I:\bbA'\to\bbA$, $J:\bbB'\to\bbB$ being the inclusion $\CQ$-functors, respectively.


The restriction map
$$(-)_{\bbA',\bbB'}:\QDist(\bbA,\bbB)\to\QDist(\bbA',\bbB')$$
has both left and right adjoints 
$$\underline{(-)}\dv(-)_{\bbA',\bbB'}\dv\overline{(-)},$$
which extend a $\CQ$-distributor $\phi':\bbA'\oto\bbB'$ respectively to $\underline{\phi'}:\bbA\oto\bbB$ and $\overline{\phi'}:\bbA\oto\bbB$ with
\begin{equation} \label{underline_overline_phi_def}
\underline{\phi'}= J_{\nat}\circ\phi'\circ I^{\nat}\quad\text{and}\quad\overline{\phi'}=(J^{\nat}\rda\phi')\lda I_{\nat}=J^{\nat}\rda(\phi'\lda I_{\nat}).
\end{equation}


The verification of the following proposition is easy under the help of Propositions \ref{adjoint_arrow_calculation} and \ref{fully_faithful_graph}:

\begin{prop} \phantomsection \label{subdistributor_inclusion}
\begin{itemize}
\item[\rm (1)] $\phi_{\bbA',\bbB'}=J^{\nat}\circ\phi\circ I_{\nat}=( J_{\nat}\rda\phi)\lda I^{\nat}= J_{\nat}\rda(\phi\lda I^{\nat})$.
\item[\rm (2)] $\phi_{\bbA,\bbB'}\lda\mu=(\phi\lda\mu)_{\bbB'}$ and $\lam\rda\phi_{\bbA',\bbB}=(\lam\rda\phi)_{\bbA'}$ for all $\mu\in\PA$, $\lam\in\PdB$.
\item[\rm (3)] $\phi_{\bbA',\bbB}\lda\mu'=\phi\lda\underline{\mu'}$ and $\lam'\rda\phi_{\bbA,\bbB'}=\underline{\lam'}\rda\phi$ for all $\mu'\in\PA'$, $\lam'\in\PdB'$.
\item[\rm (4)] $\mu'\lda\phi_{\bbA',\bbB}=\overline{\mu'}\lda\phi$ and $\phi_{\bbA,\bbB'}\rda\lam'=\phi\rda\overline{\lam'}$ for all $\mu'\in\PA'$, $\lam'\in\PdB'$.
\end{itemize}
\end{prop}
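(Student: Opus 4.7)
The plan is to derive all four items by combining the adjunctions $I_{\nat}\dv I^{\nat}$ and $J_{\nat}\dv J^{\nat}$ (which hold because $I:\bbA'\to\bbA$ and $J:\bbB'\to\bbB$ are $\CQ$-functors) with the calculation rules of Proposition \ref{adjoint_arrow_calculation} and the basic quantaloid identities $h\lda(g\circ f)=(h\lda f)\lda g$ and $(g\circ f)\rda h=f\rda(g\rda h)$, both of which are immediate consequences of (\ref{composition_implication_adjoint}). Proposition \ref{fully_faithful_graph} enters only implicitly, through $I^{\nat}\circ I_{\nat}=\bbA'$ and $J^{\nat}\circ J_{\nat}=\bbB'$; the real engine of the proof is simply moving $I_{\nat}/I^{\nat}$ and $J_{\nat}/J^{\nat}$ across compositions and implications.

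For (1), I would first verify pointwise that $\phi_{\bbA',\bbB'}=J^{\nat}\circ\phi\circ I_{\nat}$: the right-hand side evaluated at $(x',y')$ is the join $\bv_{x,y}\bbB(y,Jy')\circ\phi(x,y)\circ\bbA(Ix',x)$, which is bounded above by $\phi(Ix',Jy')$ by the bimodule property of $\phi$ and meets this bound at $x=Ix',\ y=Jy'$ via the identity hom-arrows; by definition $\phi(Ix',Jy')=\phi_{\bbA',\bbB'}(x',y')$. The remaining two equalities then drop out of Proposition \ref{adjoint_arrow_calculation}(1) applied to the two adjunctions: $\phi\circ I_{\nat}=\phi\lda I^{\nat}$ and $J^{\nat}\circ\phi=J_{\nat}\rda\phi$, which re-express the triple composite as $(J_{\nat}\rda\phi)\lda I^{\nat}$ and as $J_{\nat}\rda(\phi\lda I^{\nat})$.

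For (2), a direct pointwise computation is cleanest: $(\phi_{\bbA,\bbB'}\lda\mu)(y')=\bw_{x}\phi(x,Jy')\lda\mu(x)=(\phi\lda\mu)(Jy')=(\phi\lda\mu)_{\bbB'}(y')$, and the statement for $\rda$ is the formal dual. For (3), use (1) to rewrite $\phi_{\bbA',\bbB}=\phi\circ I_{\nat}$ and apply Proposition \ref{adjoint_arrow_calculation}(2) to $I_{\nat}\dv I^{\nat}$ to obtain $(\phi\circ I_{\nat})\lda\mu'=\phi\lda(\mu'\circ I^{\nat})=\phi\lda\underline{\mu'}$; the $\rda$-variant is analogous via $J_{\nat}\dv J^{\nat}$, giving $\lam'\rda(J^{\nat}\circ\phi)=(J_{\nat}\circ\lam')\rda\phi=\underline{\lam'}\rda\phi$. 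For (4), the basic identities cited in the first paragraph yield $\mu'\lda(\phi\circ I_{\nat})=(\mu'\lda I_{\nat})\lda\phi=\overline{\mu'}\lda\phi$ and $(J^{\nat}\circ\phi)\rda\lam'=\phi\rda(J^{\nat}\rda\lam')=\phi\rda\overline{\lam'}$. The whole argument is bookkeeping; there is no genuine obstacle, and the only care needed is to keep straight the types of the distributors involved and the slot-positions inside $\lda$ and $\rda$.
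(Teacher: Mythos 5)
Your proof is correct and follows exactly the route the paper intends: the paper gives no written proof, remarking only that the verification is easy with the help of Propositions \ref{adjoint_arrow_calculation} and \ref{fully_faithful_graph}, and your argument is precisely that routine bookkeeping with the adjunctions $I_{\nat}\dv I^{\nat}$, $J_{\nat}\dv J^{\nat}$ and the implication identities. All type-checks and slot-positions in your computations are right, so nothing further is needed.
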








These formulas will be used in a flexible way throughout this section.  In particular, the first formula indicates that the map
$$(-)_{\bbA'}:\PA\to\PA'$$
sending each $\mu:\bbA\oto *_{t\mu}$ to its restriction $\mu_{\bbA'}:\bbA'\oto *_{t\mu}$ is in fact the $\CQ$-functor
$$ I_{\nat}^*=(I^{\nat})_*:\PA\to\PA'.$$


\begin{lem} \phantomsection \label{A_JB_Chu_connection_M}
\begin{itemize}
\item[\rm (1)] $\CM\phi_{\bbA,\bbB'}\subseteq\CM\phi$, and the inclusion $\CQ$-functor $\CM\phi_{\bbA,\bbB'}\ \to/^(->/\CM\phi$ is right adjoint to $\CM(\bbA, J_{\nat}):\CM\phi\to\CM\phi_{\bbA, \bbB'}$. In particular, $\CM(\bbA, J_{\nat})$ is surjective.
\item[\rm (2)] $\CM(I^{\nat},\bbB): \CM\phi_{\bbA',\bbB}\to\CM\phi$ is fully faithful, with a right adjoint given by $(-)_{\bbA'}: \CM\phi\to\CM\phi_{\bbA',\bbB}$.\footnote{This implies, in particular,  that for each $\mu\in\CM\phi\subseteq\PA$, the restriction $\mu_{\bbA'}$ on $\bbA'$ belongs to $\CM\phi_{\bbA',\bbB}$. }
\end{itemize}
\end{lem}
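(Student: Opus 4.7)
The plan is to realise the $\CQ$-functors in (1) and (2) as coming from restriction/extension adjunctions between presheaf categories, after which both assertions will follow either from Proposition \ref{monad_reflective}(1) or from the standard criterion that a left adjoint is fully faithful iff its unit is an isomorphism. The main technical inputs are Proposition \ref{subdistributor_inclusion} together with the adjunctions $I_\nat\dv I^\nat$ and $J_\nat\dv J^\nat$ in $\QDist$.

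For (1), the first observation will be that the Chu connection $(\bbA,J_\nat):\phi_{\bbA,\bbB'}\to\phi$ forces $\phi_{\bbA,\bbB'}=J_\nat\rda\phi$, so that $\CM(\bbA,J_\nat)$, viewed as a map $\PA\to\PA$, is nothing but the $\CQ$-monad $\dphiABp\uphiABp$. A one-line rewrite using the identity $a\rda(b\rda c)=(b\circ a)\rda c$ then gives
$$\dphiABp\uphiABp\mu=(J_\nat\circ J^\nat\circ(\phi\lda\mu))\rda\phi\ \geq\ (\phi\lda\mu)\rda\phi=\dphi\uphi\mu$$
for every $\mu\in\PA$, where $J_\nat\circ J^\nat\leq\bbB$ is the counit of $J_\nat\dv J^\nat$. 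This forces any $\mu\in\CM\phi_{\bbA,\bbB'}=\Fix(\dphiABp\uphiABp)$ to satisfy $\mu=\dphi\uphi\mu$, hence to lie in $\CM\phi$. Restricting the monad $\dphiABp\uphiABp$ to the subcategory $\CM\phi$ (which is stable, since its fixed points are fixed) and invoking Proposition \ref{monad_reflective}(1) will then yield the advertised adjunction, while surjectivity of $\CM(\bbA,J_\nat)$ is automatic: every element of the fixpoint subcategory is hit by its monad.

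For (2), the plan is first to recognise $(-)_{\bbA'}:\PA\to\PA'$ as the Kan right adjoint $(I^\nat)_*$ of $(I^\nat)^*$. Applying the identity $(a\lda b)\lda c=a\lda(c\circ b)$ will give $\uphi((I^\nat)^*\mu')=\uphiApB\mu'$, and a direct Yoneda computation will give $\lam\rda\phi_{\bbA',\bbB}=(\lam\rda\phi)_{\bbA'}$ for $\lam\in\PdB$; combining these two rewrites produces the key formula
$$\dphiApB\uphiApB\mu'=(\CM(I^\nat,\bbB)\mu')_{\bbA'}.$$
For $\mu'\in\CM\phi_{\bbA',\bbB}$ this reads $\mu'=(\CM(I^\nat,\bbB)\mu')_{\bbA'}$, so the unit of the prospective adjunction is the identity; full faithfulness of $\CM(I^\nat,\bbB)$ will then follow for free. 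A symmetric computation, using Proposition \ref{subdistributor_inclusion}(3) together with the counit $(I^\nat)^*(I^\nat)_*\leq 1_{\PA}$, will confirm that $(-)_{\bbA'}$ actually sends $\CM\phi$ into $\CM\phi_{\bbA',\bbB}$. Finally, the adjunction $\CM(I^\nat,\bbB)\dv(-)_{\bbA'}$ will reduce to $(I^\nat)^*\dv(I^\nat)_*$ after replacing $\dphi\uphi((I^\nat)^*\mu')$ by $(I^\nat)^*\mu'$ inside $\mu\lda(-)$, a move that is legitimate because $\dphi\uphi$ is a $\CQ$-functor with $\mu=\dphi\uphi\mu$, so $\mu\lda\dphi\uphi\nu=\mu\lda\nu$ for every $\nu\in\PA$. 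The main obstacle throughout is purely notational: tracking the sources, targets and directions of the various distributors and picking the right rewrite from Proposition \ref{adjoint_arrow_calculation} or Proposition \ref{subdistributor_inclusion}; once the correct formulas are in place the algebraic content of both parts is very short.
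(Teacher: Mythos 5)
Your proposal is correct; both parts go through, and every rewrite you invoke (the identities of Proposition \ref{adjoint_arrow_calculation}, the formulas of Proposition \ref{subdistributor_inclusion}, the counits $J_{\nat}\circ J^{\nat}\leq\bbB$ and $(I^{\nat})^*(I^{\nat})_*\leq 1_{\PA}$) is legitimate. The execution differs from the paper's in two places. For (1), you compare the two closure operators directly, showing $\dphiABp\uphiABp\geq\dphi\uphi$ and hence $\CM\phi_{\bbA,\bbB'}\subseteq\CM\phi$, and then apply Proposition \ref{monad_reflective}(1) to the restricted monad; the paper instead simply reads off from Step 1 of the proof of Proposition \ref{M_functor} that the right adjoint of $\CM(\bbA,J_{\nat})$ is $\bbA_*(\mu)=\mu\lda\bbA=\mu$, i.e.\ the inclusion. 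Your route is more self-contained but redoes work that Proposition \ref{M_functor} already packages. For (2), both arguments identify the right adjoint as $(I^{\nat})_*=(-)_{\bbA'}$, but you deduce full faithfulness from the unit being an identity, via the formula $\dphiApB\uphiApB\mu'=(\CM(I^{\nat},\bbB)\mu')_{\bbA'}$ --- a formula the paper only records later, as Equation (\ref{dphi_uphi_mup_Ap}) in the proof of Lemma \ref{RE_id} --- whereas the paper verifies $\PA(\dphi\uphi\underline{\mu'},\dphi\uphi\underline{\mu''})=\PA'(\mu',\mu'')$ by a direct chain through the adjunctions $\uphi\dv\dphi$ and $\uphiApB\dv\dphiApB$ together with Proposition \ref{subdistributor_inclusion}(3). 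The ``left adjoint with invertible unit is fully faithful'' criterion is valid in the $\CQ$-enriched setting, and your two auxiliary claims ($\mu\lda\dphi\uphi\nu=\mu\lda\nu$ whenever $\mu=\dphi\uphi\mu$, and $(-)_{\bbA'}$ landing in $\CM\phi_{\bbA',\bbB}$) both check out, so there is no gap.
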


\begin{proof}
(1) It follows from the proof of Proposition \ref{M_functor}, Step 1, that the right adjoint of $\CM(\bbA, J_{\nat})$ is given by
$$\bbA_*:\CM\phi_{\bbA,\bbB'}\to\CM\phi, \quad \bbA_*(\mu)=\mu\lda\bbA=\mu;$$
hence, the conclusion follows.

(2) First, by virtue of Proposition \ref{M_functor}, the right adjoint of $\CM(I^{\nat},\bbB)$ is given by $(I^{\nat})_*$, hence by $(-)_{\bbA'}:\CM\phi\to\CM\phi_{\bbA',\bbB}$.
Second, for all $\mu'\in\CM\phi_{\bbA',\bbB}$, from Equation (\ref{underline_overline_phi_def}) one has
$$\CM(I^{\nat},\bbB)\mu'=\dphi\uphi(I^{\nat})^*\mu'=\dphi\uphi(\mu'\circ I^{\nat})=\dphi\uphi\underline{\mu'}.$$
Therefore, for all $\mu',\mu''\in\CM\phi_{\bbA',\bbB}$,
\begin{align*}
\PA'(\mu',\mu'')&=\PA'(\mu',\dphiApB\uphiApB\mu'')&(\mu''\in\CM\phi_{\bbA',\bbB})\\
&=\PdB(\uphiApB\mu',\uphiApB\mu'')&(\uphiApB\dv\dphiApB)\\
&=\PdB(\uphi\underline{\mu'},\uphi\underline{\mu''})&(\text{Proposition \ref{subdistributor_inclusion}(3)})\\
&=\PdB(\uphi\dphi\uphi\underline{\mu'},\uphi\underline{\mu''})\\
&=\PA(\dphi\uphi\underline{\mu'}, \dphi\uphi\underline{\mu''}),&(\uphi\dv\dphi)
\end{align*}
indicating that $\CM(I^{\nat},\bbB)$ is fully faithful.
\end{proof}

Therefore, the four comparison $\CQ$-functors become:
\renewcommand\arraystretch{2}
$$\begin{array}{ll}
R_1=(\CM\phi\to^{(-)_{\bbA'}}\CM\phi_{\bbA',\bbB} \to^{\CM(\bbA', J_{\nat})}\CM\phi_{\bbA',\bbB'}), & \mu\mapsto\dphiApBp\uphiApBp\mu_{\bbA'},\\
E_1=(\CM\phi_{\bbA',\bbB'}\ \to/^(->/\CM\phi_{\bbA',\bbB}\to^{\CM(I^{\nat},\bbB)}\CM\phi), & \mu'\mapsto\dphi\uphi\underline{\mu'},\\
R_2=(\CM\phi\to^{\CM(\bbA, J_{\nat})}\CM\phi_{\bbA,\bbB'}\to^{(-)_{\bbA'}}\CM\phi_{\bbA',\bbB'}), & \mu\mapsto(\dphiABp\uphiABp\mu)_{\bbA'},\\
E_2=(\CM\phi_{\bbA',\bbB'}\to^{\CM(I^{\nat},\bbB')}\CM\phi_{\bbA,\bbB'}\ \to/^(->/\CM\phi), & \mu'\mapsto\dphiABp\uphiABp\underline{\mu'}.
\end{array}$$
As an immediate consequence of Lemma \ref{A_JB_Chu_connection_M} we obtain that $R_1$ and $R_2$ are both surjective, while $E_1$ and $E_2$ are both fully faithful (thus injective, since their domains are skeletal).

\begin{lem} \phantomsection \label{RE_id}
\begin{itemize}
\item[\rm (1)] The diagram
$$\bfig
\square<1000,500>[\CM\phi_{\bbA',\bbB'}`\CM\phi`\CM\phi`\CM\phi_{\bbA',\bbB};E_1`E_2`(-)_{\bbA'}`(-)_{\bbA'}]
\morphism(0,500)/^(->/<1000,-500>[\CM\phi_{\bbA',\bbB'}`\CM\phi_{\bbA',\bbB};]
\efig$$
is commutative. \item[\rm (2)]   All of the composites $R_1 E_1$, $R_1 E_2$, $R_2 E_1$, $R_2 E_2$ coincide with the identity $\CQ$-functor on $\CM\phi_{\bbA',\bbB'}$. In particular, $\CM\phi_{\bbA',\bbB'}$ is a retract of $\CM\phi$ (in $\QCat$). \end{itemize}
\end{lem}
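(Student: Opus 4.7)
The plan is to deduce both parts from Lemma~\ref{A_JB_Chu_connection_M} (applied to $\phi$, $\phi_{\bbA,\bbB'}$, and $\phi_{\bbA',\bbB}$) together with the restriction identities of Proposition~\ref{subdistributor_inclusion}. The underlying categorical principle invoked throughout is that an adjunction $L\dv R$ in $\QCat$ in which one side is fully faithful, together with a skeletal target, forces the corresponding composite on that side to be the identity.

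For part~(1), fix $\mu'\in\CM\phi_{\bbA',\bbB'}$. Applying Lemma~\ref{A_JB_Chu_connection_M}(2) to $\phi$ directly gives $(E_1\mu')_{\bbA'}=(\CM(I^{\nat},\bbB)\mu')_{\bbA'}=\mu'$, since $\CM(I^{\nat},\bbB)$ is fully faithful with right adjoint $(-)_{\bbA'}$. Applying the same part of the lemma to $\phi_{\bbA,\bbB'}$ (with the same inclusion $I:\bbA'\to\bbA$) yields $(\CM(I^{\nat},\bbB')\mu')_{\bbA'}=\mu'$; since restriction to $\bbA'$ is defined pointwise and hence commutes with the inclusion $\CM\phi_{\bbA,\bbB'}\hookrightarrow\CM\phi$, this forces $(E_2\mu')_{\bbA'}=\mu'$. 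Both values then coincide with $\mu'$ transported along the inclusion $\CM\phi_{\bbA',\bbB'}\hookrightarrow\CM\phi_{\bbA',\bbB}$ supplied by Lemma~\ref{A_JB_Chu_connection_M}(1) applied to $\phi_{\bbA',\bbB}$, which is the diagonal of the square.

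Three of the four composites in part~(2) are easy consequences of~(1). For $R_1E_i\mu'$ ($i=1,2$), part~(1) reduces the problem to $\CM(\bbA',J_{\nat})(\mu')$, and Lemma~\ref{A_JB_Chu_connection_M}(1) applied to $\phi_{\bbA',\bbB}$ makes $\CM(\bbA',J_{\nat})$ act as the identity on $\CM\phi_{\bbA',\bbB'}$, giving $R_1E_i\mu'=\mu'$. For $R_2E_2\mu'$, the image $E_2\mu'$ already lies in $\CM\phi_{\bbA,\bbB'}$, so Lemma~\ref{A_JB_Chu_connection_M}(1) applied to $\phi$ gives $\CM(\bbA,J_{\nat})(E_2\mu')=E_2\mu'$, whence $R_2E_2\mu'=(E_2\mu')_{\bbA'}=\mu'$ by~(1).

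The main obstacle is the remaining case $R_2E_1$: since $E_1\mu'\in\CM\phi$ need not belong to $\CM\phi_{\bbA,\bbB'}$, the shortcut used above is unavailable. Here I plan to evaluate $\CM(\bbA,J_{\nat})(E_1\mu')=\dphiABp\uphiABp(\dphi\uphi\underline{\mu'})$ by direct manipulation: Proposition~\ref{subdistributor_inclusion}(2) collapses $\uphiABp(\dphi\uphi\underline{\mu'})$ to $(\uphi\underline{\mu'})_{\bbB'}$ (using $\uphi\dphi\uphi=\uphi$); Proposition~\ref{subdistributor_inclusion}(3) rewrites $\uphi\underline{\mu'}$ as $\uphiApB\mu'$; and Proposition~\ref{subdistributor_inclusion}(2), now applied to $\phi_{\bbA',\bbB}$, identifies $(\uphiApB\mu')_{\bbB'}$ with $\uphiApBp\mu'$. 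Restricting the resulting $\uphiApBp\mu'\rda\phi_{\bbA,\bbB'}$ to $\bbA'$ via Proposition~\ref{adjoint_arrow_calculation}(3) (with $I_{\nat}\dv I^{\nat}$) converts $\phi_{\bbA,\bbB'}$ to $\phi_{\bbA',\bbB'}$, leaving $\dphiApBp\uphiApBp\mu'$, which equals $\mu'$ because $\mu'\in\CM\phi_{\bbA',\bbB'}$.
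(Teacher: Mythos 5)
Your proof is correct and follows essentially the same route as the paper's: both rest on Lemma \ref{A_JB_Chu_connection_M} together with the restriction identities of Proposition \ref{subdistributor_inclusion}, and your computational treatment of the hard case $R_2E_1$ is the paper's chain of equalities run in the opposite direction. The only real difference is cosmetic: where the paper establishes the key identity $(\dphi\uphi\underline{\mu'})_{\bbA'}=\mu'$ by direct calculation, you extract it abstractly from full faithfulness of $\CM(I^{\nat},\bbB)$ (resp. the reflections of Lemma \ref{A_JB_Chu_connection_M}(1)) plus skeletality of the $\CQ$-categories involved, which is equally valid.
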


\begin{proof}
(1) For all $\mu'\in\CM\phi_{\bbA',\bbB}$, it holds that
\begin{equation} \label{dphi_uphi_mup_Ap}
(\dphi\uphi\underline{\mu'})_{\bbA'}=\dphiApB\uphi\underline{\mu'}=\dphiApB\uphiApB\mu'=\mu',
\end{equation}
where the first and second equalities respectively follow from items (2) and (3) in Proposition \ref{subdistributor_inclusion}.
Therefore, if $\mu'\in\CM\phi_{\bbA',\bbB'}$, then $\mu'\in\CM\phi_{\bbA',\bbB}$ (since $\CM\phi_{\bbA',\bbB'}\subseteq\CM\phi_{\bbA',\bbB}$), and consequently,
$$(E_1\mu')_{\bbA'}=(\dphi\uphi\underline{\mu'})_{\bbA'} =\mu'=(\dphiABp\uphiABp\underline{\mu'})_{\bbA'} =(E_2\mu')_{\bbA'},$$
where the third equality holds by applying Equation (\ref{dphi_uphi_mup_Ap}) to $\phi_{\bbA,\bbB'}$. This proves the commutativity of the diagram.

(2) For all $\mu'\in\CM\phi_{\bbA',\bbB'}$, note that
\begin{align*}
R_1E_1\mu'&=\dphiApBp\uphiApBp(\dphi\uphi\underline{\mu'})_{\bbA'}\\
&=\dphiApBp\uphiApBp\mu'&(\text{Equation (\ref{dphi_uphi_mup_Ap})})\\
&=\mu'&(\mu'\in\CM\phi_{\bbA',\bbB'})\\
&=(\dphiABp\uphiABp \underline{\mu'})_{\bbA'}&(\text{Equation (\ref{dphi_uphi_mup_Ap})})\\
&=(\dphiABp\uphiABp\dphiABp\uphiABp\underline{\mu'})_{\bbA'}\\
&=R_2E_2\mu'
\end{align*}
and
\begin{align*}
R_1 E_2\mu'&=\dphiApBp\uphiApBp(\dphiABp\uphiABp\underline{\mu'})_{\bbA'}\\
&=\dphiApBp\uphiApBp\mu'&(\text{Equation (\ref{dphi_uphi_mup_Ap})})\\
&=\mu'&(\mu'\in\CM\phi_{\bbA',\bbB'})\\
&=(\dphiABp\uphiABp\underline{\mu'})_{\bbA'}&(\text{Equation (\ref{dphi_uphi_mup_Ap})})\\
&=(\dphiABp(\uphi\underline{\mu'})_{\bbB'})_{\bbA'}&(\text{Proposition \ref{subdistributor_inclusion}(2)})\\
&=(\dphiABp(\uphi\dphi\uphi\underline{\mu'})_{\bbB'})_{\bbA'}\\
&=(\dphiABp\uphiABp\dphi\uphi\underline{\mu'})_{\bbA'}&(\text{Proposition \ref{subdistributor_inclusion}(2)})\\
&=R_2 E_1\mu',
\end{align*}
hence all of $R_1 E_1$, $R_1 E_2$, $R_2 E_1$, $R_2 E_2$ coincide with the identity $\CQ$-functor on $\CM\phi_{\bbA',\bbB'}$.
\end{proof}

The proof of Theorem \ref{one for all} then easily comes out of Lemma \ref{RE_id}:

\begin{proof}[Proof of Theorem \ref{one for all}]
Since all of $R_1 E_1$, $R_1 E_2$ and $R_2 E_2$ coincide with the identity $\CQ$-functor on $\CM\phi_{\bbA',\bbB'}$, it soon follows that $E_1$ is an isomorphism in $\QCat$ if and only if so is $R_1$ if and only if so is $E_2$ if and only if so is $R_2$.
\end{proof}

\begin{cor}
If one of the $\CQ$-functors  $R_1$, $R_2$, $E_1$, $E_2$ is an isomorphism in $\QCat$, then $R_1=R_2$, $E_1=E_2$. Moreover, for each $\mu\in\CM\phi\subseteq\PA$, $R_1\mu=R_2\mu=\mu_{\bbA'}$, the restriction of $\mu$ on $\bbA'$.
\end{cor}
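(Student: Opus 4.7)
The plan is to exploit Lemma \ref{RE_id}(2) systematically under the extra assumption that one (hence all, by Theorem \ref{one for all}) of $R_1, R_2, E_1, E_2$ is an isomorphism. Since all four composites $R_iE_j$ ($i,j\in\{1,2\}$) equal the identity on $\CM\phi_{\bbA',\bbB'}$, and since under the hypothesis all four maps are bijections between skeletal $\CQ$-categories, the relations $R_1E_1=\id$ and $R_2E_2=\id$ force $E_1=R_1^{-1}$ and $E_2=R_2^{-1}$.

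Next I would evaluate the comparison functors on a fixed $\mu\in\CM\phi$. Because $E_1\colon\CM\phi_{\bbA',\bbB'}\to\CM\phi$ is now a bijection, there is a unique $\mu'\in\CM\phi_{\bbA',\bbB'}$ with $E_1\mu'=\mu$. The key computation is already available in the paper: equation~\eqref{dphi_uphi_mup_Ap}, applied to $\mu'\in\CM\phi_{\bbA',\bbB'}\subseteq\CM\phi_{\bbA',\bbB}$, gives $(E_1\mu')_{\bbA'}=(\dphi\uphi\underline{\mu'})_{\bbA'}=\mu'$. Hence $\mu_{\bbA'}=\mu'$, and therefore
\[
R_1\mu=R_1E_1\mu'=\mu'=\mu_{\bbA'},\qquad R_2\mu=R_2E_1\mu'=\mu'=\mu_{\bbA'},
\]
the second equality using Lemma \ref{RE_id}(2) in the form $R_2E_1=\id$. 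This simultaneously yields $R_1=R_2$ (both send $\mu$ to $\mu_{\bbA'}$) and supplies the explicit description announced in the statement.

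Finally, the equality $E_1=E_2$ is immediate once $R_1=R_2$ is in hand, since $E_1$ and $E_2$ are the respective two-sided inverses of $R_1$ and $R_2$. No new obstacle is expected here: the whole argument is a direct bookkeeping exercise on the identities already collected in Lemmas \ref{A_JB_Chu_connection_M} and \ref{RE_id}, and the only substantive input is the surjectivity of $E_1$ guaranteed by the isomorphism hypothesis together with Theorem \ref{one for all}.
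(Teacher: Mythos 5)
Your argument is correct. For the first claim ($R_1=R_2$ and $E_1=E_2$) you and the paper do essentially the same thing: cancel against the identities $R_iE_j=\id$ of Lemma~\ref{RE_id}(2), using that all four comparison functors are bijections by Theorem~\ref{one for all}. For the ``moreover'' part your route is genuinely different. The paper argues structurally: surjectivity of $E_1$ makes $\CM(I^{\nat},\bbB)$ surjective, hence an isomorphism (it is already fully faithful by Lemma~\ref{A_JB_Chu_connection_M}(2)), which forces the inclusion $\CM\phi_{\bbA',\bbB'}\hookrightarrow\CM\phi_{\bbA',\bbB}$ to be surjective and therefore the identity, so its left adjoint $\CM(\bbA',J_{\nat})$ is also the identity and $R_1$ collapses to the restriction $(-)_{\bbA'}$. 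You instead work pointwise: choose the preimage $\mu'$ of $\mu$ under the bijection $E_1$, identify $\mu'=\mu_{\bbA'}$ via Equation~(\ref{dphi_uphi_mup_Ap}) (legitimate, since $\CM\phi_{\bbA',\bbB'}\subseteq\CM\phi_{\bbA',\bbB}$), and read off $R_1\mu=R_2\mu=\mu'$ from $R_1E_1=R_2E_1=\id$. Your version is slightly more economical --- it needs neither the full faithfulness of $\CM(I^{\nat},\bbB)$ nor the observation that a left adjoint of an identity functor is an identity --- and it produces $R_1=R_2$ and the explicit formula in a single computation; the paper's version has the side benefit of exhibiting $R_1$ globally as a composite of identity comparison functors. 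Both proofs are complete.
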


\begin{proof}
That $R_1=R_2$ and $E_1=E_2$ follow immediately from that all of $R_1 E_1$, $R_2 E_2$, $R_1 E_2$ coincide with the identity $\CQ$-functor on $\CM\phi_{\bbA',\bbB'}$. So, it remains to check that $R_1\mu=\mu_{\bbA'}$ for all $\mu\in\CM\phi$. Note that $\CM(I^{\nat},\bbB)$ is surjective since so is $E_1$, hence an isomorphism in $\QCat$ because it is already fully faithful. Then the inclusion $\CQ$-functor $\CM\phi_{\bbA',\bbB'}\ \to/^(->/\CM\phi_{\bbA',\bbB}$ is surjective, hence  an identity $\CQ$-functor. Therefore, $\CM(\bbA',J_\natural)$, being left adjoint to an identity $\CQ$-functor, itself must be an identity $\CQ$-functor, so, $R_1\mu=\mu_{\bbA'}$ for all $\mu\in\CM\phi$.
\end{proof}

\subsection{Reducible $\CQ$-subcategories}

This subsection presents a characterization of reducts of $\CQ$-distributors in terms of reducible $\CQ$-subcategories.

\begin{defn} \label{phi_reducible_def}
Let $\phi:\bbA\oto\bbB$ be a $\CQ$-distributor and $\bbA'\subseteq\bbA$. $\bbA\setminus\bbA'$ is \emph{$\phi$-reducible} if for any $\mu\in\PA$, there exists $\mu'\in\PA'$ such that
$$\phi\lda\mu=\phi_{\bbA',\bbB}\lda\mu'.$$

Dually, for $\bbB'\subseteq\bbB$, $\bbB\setminus\bbB'$ is \emph{$\phi$-reducible} if $\bbB^{\op}\setminus\bbB'^{\op}$ is $\phi^{\op}$-reducible w.r.t. the $\CQ^{\op}$-distributor $\phi^{\op}:\bbB^{\op}\oto\bbA^{\op}$; or equivalently, for any $\lam\in\PdB$, there exists $\lam'\in\PdB'$ such that
$$\lam\rda\phi=\lam'\rda\phi_{\bbA,\bbB'}.$$
\end{defn}

\begin{rem} \label{FCA_reducible}
In formal concept analysis, given a formal context $(A,B,R)$, an object $x\in A$ is \emph{reducible} \cite{Ganter1999} if there exists a subset $U\subseteq A\setminus\{x\}$ with $\uR(\{x\})=\uR(U)$, and a property $y\in B$ is \emph{reducible} if there exists a subset $V\subseteq B\setminus\{y\}$ with $\dR(\{y\})=\dR(V)$. It is easy to see that for any subset $A'\subseteq A$ (resp. $B'\subseteq B$), $A\setminus A'$ (resp. $B\setminus B'$) is $R$-reducible in the sense of Definition \ref{phi_reducible_def} if, and only if, each element $x\in A\setminus A'$ (resp. $y\in B\setminus B'$) is reducible. Therefore, the $\phi$-reducibility introduced here is an extension of the classical notions in formal concept analysis.
\end{rem}

The main result of this subsection is the following:

\begin{thm} \label{reduct reducible}
$\phi_{\bbA',\bbB'}$ is a reduct of $\phi$ if and only if $\bbA\setminus\bbA'$ and $\bbB\setminus\bbB'$ are both $\phi$-reducible.
\end{thm}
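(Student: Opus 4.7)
The plan is to prove both implications through the surjectivity of the fully faithful comparison $\CQ$-functor $E_1:\CM\phi_{\bbA',\bbB'}\to\CM\phi$ (and, in the forward direction, its dual $E_2$), using Theorem~\ref{one for all} to pass freely between the four comparison $\CQ$-functors.

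For the forward direction, assume $\phi_{\bbA',\bbB'}$ is a reduct, so both $E_1$ and $E_2$ are bijective. To see that $\bbA\setminus\bbA'$ is $\phi$-reducible, pick $\mu\in\PA$ and consider $\dphi\uphi\mu\in\CM\phi$; surjectivity of $E_1$ yields $\mu'\in\CM\phi_{\bbA',\bbB'}\subseteq\PA'$ with $\dphi\uphi\underline{\mu'}=\dphi\uphi\mu$, and after applying $\uphi$ and using $\uphi\dphi\uphi=\uphi$ this reads $\uphi\underline{\mu'}=\uphi\mu$, which by Proposition~\ref{subdistributor_inclusion}(3) is precisely $\phi_{\bbA',\bbB}\lda\mu'=\phi\lda\mu$. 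For $\bbB\setminus\bbB'$, given $\lam\in\PdB$ the presheaf $\mu:=\dphi\lam$ lies in $\CM\phi$ (since $\dphi\uphi\dphi=\dphi$); surjectivity of $E_2$ produces $\mu'\in\CM\phi_{\bbA',\bbB'}$ with $\dphiABp\uphiABp\underline{\mu'}=\dphi\lam$, and then $\lam':=\uphiABp\underline{\mu'}\in\PdB'$ satisfies $\lam'\rda\phi_{\bbA,\bbB'}=\dphiABp\lam'=\dphi\lam=\lam\rda\phi$.

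For the converse, assume both reducibilities; I aim to show that $E_1$ is surjective (it is already fully faithful between skeletal $\CQ$-categories by Lemma~\ref{A_JB_Chu_connection_M}, hence injective). Fix $\nu\in\CM\phi$ and set $\lam:=\uphi\nu$, so $\nu=\lam\rda\phi$. Reducibility of $\bbB\setminus\bbB'$ applied to $\lam$ produces $\lam'\in\PdB'$ with $\nu=\lam'\rda\phi_{\bbA,\bbB'}$; restricting this equation to $\bbA'$ via Proposition~\ref{subdistributor_inclusion}(2) identifies $\nu_{\bbA'}$ with $\lam'\rda\phi_{\bbA',\bbB'}=\dphiApBp\lam'$, which automatically lies in $\CM\phi_{\bbA',\bbB'}$ since the image of $\dphiApBp$ coincides with $\Fix(\dphiApBp\uphiApBp)$.

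The main obstacle is to verify $E_1(\nu_{\bbA'})=\nu$, i.e., $\dphi\uphi\underline{\nu_{\bbA'}}=\nu$. By Proposition~\ref{subdistributor_inclusion}(3) this reduces to the identity $\uphiApB\nu_{\bbA'}=\lam$ in $\PdB$. A second restriction of $\nu=\lam\rda\phi$ to $\bbA'$ shows $\nu_{\bbA'}=\lam\rda\phi_{\bbA',\bbB}=\dphiApB\lam$, so the task becomes $\uphiApB\dphiApB\lam=\lam$; equivalently, $\lam$ must be a fixed point of the closure operator $\uphiApB\dphiApB$ on $\PdB$, equivalently $\lam$ must lie in the image of $\uphiApB$. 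This is precisely where reducibility of $\bbA\setminus\bbA'$ enters: it delivers $\mu'_0\in\PA'$ with $\uphiApB\mu'_0=\phi_{\bbA',\bbB}\lda\mu'_0=\phi\lda\nu=\lam$, placing $\lam$ in the image of $\uphiApB$ and closing the argument.
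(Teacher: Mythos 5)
Your proof is correct, but it is organized quite differently from the paper's. The paper routes the theorem through two intermediate characterizations (Propositions \ref{domain_reducible_condition} and \ref{codomain_reducible_condition}), which identify $\phi$-reducibility of $\bbA\setminus\bbA'$ with surjectivity of $\CM(I^{\nat},\bbB)$ and $\phi$-reducibility of $\bbB\setminus\bbB'$ with $\CM(\bbA,J_{\nat})$ being the identity on $\CM\phi=\CM\phi_{\bbA,\bbB'}$; the theorem is then obtained by composing these one-sided statements (which in the forward direction requires the extra observation that $\phi$-reducibility of $\bbB\setminus\bbB'$ implies $\phi_{\bbA',\bbB}$-reducibility). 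You instead work directly with the explicit formulas $E_1\mu'=\dphi\uphi\underline{\mu'}$ and $E_2\mu'=\dphiABp\uphiABp\underline{\mu'}$: in one direction you extract the reducibility witnesses from surjectivity of $E_1$ and $E_2$ by applying $\uphi$ (resp.\ $\uphiABp$) and the triangle identities, and in the other you exhibit a preimage $\nu_{\bbA'}$ of any $\nu\in\CM\phi$ under $E_1$, using the $\bbB$-side reducibility to show $\nu_{\bbA'}\in\CM\phi_{\bbA',\bbB'}$ and the $\bbA$-side reducibility to place $\lam=\uphi\nu$ in the image of $\uphiApB$. All steps check out against Proposition \ref{subdistributor_inclusion}, Lemma \ref{A_JB_Chu_connection_M} and Theorem \ref{one for all}; the only blemish is calling $\uphiApB\dphiApB$ a closure operator when it is a $\CQ$-comonad on $\PdB$, though the fixed-point/image equivalence you use is valid either way. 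What your route buys is a shorter, self-contained argument that never needs the one-sided restrictions $\phi_{\bbA',\bbB}$ and $\phi_{\bbA,\bbB'}$ as separate objects of study; what the paper's route buys is precisely those finer equivalences, which are of independent interest and are reused afterwards (e.g.\ in the construction of the Chu connections inducing $R_1$ and $E_2$).
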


As preparations for the proof of this theorem, we first present two propositions, which are special cases of the conclusion in Theorem \ref{reduct reducible}, and also justify the term ``$\phi$-reducible".

\begin{prop} \label{domain_reducible_condition}
Let $\phi:\bbA\oto\bbB$ be a $\CQ$-distributor and $\bbA'\subseteq\bbA$. The following statements are equivalent:
\begin{itemize}
\item[\rm (i)] $\bbA\setminus\bbA'$ is $\phi$-reducible.
\item[\rm (ii)] $\phi\lda\mu=\phi_{\bbA',\bbB}\lda(\dphi\uphi\mu)_{\bbA'}$ for all $\mu\in\PA$.
\item[\rm (iii)] $\uphi\dphi=\uphiApB\dphiApB:\PdB\to\PdB$.
\item[\rm (iv)] $\CM(I^{\nat},\bbB):\CM\phi_{\bbA',\bbB}\to\CM\phi$ is surjective, thus an isomorphism in $\QCat$.
\end{itemize}
\end{prop}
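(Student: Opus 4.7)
The plan is to prove the three equivalences (i)$\Leftrightarrow$(ii), (ii)$\Leftrightarrow$(iii) and (iii)$\Leftrightarrow$(iv) in turn, leaning on a short translation dictionary I will extract first. From Proposition~\ref{subdistributor_inclusion} one reads off $\uphiApB\mu'=\phi_{\bbA',\bbB}\lda\mu'=\phi\lda\underline{\mu'}=\uphi\underline{\mu'}$ for $\mu'\in\PA'$, and $(\dphi\lam)_{\bbA'}=\lam\rda\phi_{\bbA',\bbB}=\dphiApB\lam$ for $\lam\in\PdB$. Combining the two yields $\phi_{\bbA',\bbB}\lda(\dphi\uphi\mu)_{\bbA'}=\uphiApB\dphiApB(\uphi\mu)$, so condition (ii) is nothing but the assertion $\uphi\mu\in\Fix(\uphiApB\dphiApB)$ for every $\mu\in\PA$.

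With this reformulation, (ii)$\Rightarrow$(i) is the tautological choice $\mu'=(\dphi\uphi\mu)_{\bbA'}$, and (i)$\Rightarrow$(ii) follows because any $\mu'\in\PA'$ witnessing $\uphi\mu=\uphiApB\mu'$ puts $\uphi\mu$ into the image of $\uphiApB$, which by the triangle identity $\uphiApB\dphiApB\uphiApB=\uphiApB$ of the adjunction $\uphiApB\dv\dphiApB$ coincides with $\Fix(\uphiApB\dphiApB)$. The equivalence (ii)$\Leftrightarrow$(iii) is then easy: (iii)$\Rightarrow$(ii) is immediate by specialising (iii) to $\lam=\uphi\mu$; for the converse, I will apply (ii) with $\mu=\dphi\lam$ and invoke the triangle identity $\dphi\uphi\dphi=\dphi$ together with the dictionary to get $\dphiApB\uphi\dphi\lam=(\dphi\uphi\dphi\lam)_{\bbA'}=(\dphi\lam)_{\bbA'}=\dphiApB\lam$, whence $\uphi\dphi\lam=\uphiApB\dphiApB\uphi\dphi\lam=\uphiApB\dphiApB\lam$.

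For (iii)$\Leftrightarrow$(iv), I will transport everything to $\PdB$ via the Isbell isomorphisms $\uphi\colon\CM\phi\to\Fix(\uphi\dphi)$ and $\uphiApB\colon\CM\phi_{\bbA',\bbB}\to\Fix(\uphiApB\dphiApB)$. By Lemma~\ref{A_JB_Chu_connection_M}(2) the $\CQ$-functor $\CM(I^{\nat},\bbB)$ is already fully faithful, so (iv) reduces to its surjectivity. The computation $\uphi(\CM(I^{\nat},\bbB)\mu')=\uphi\dphi\uphi\underline{\mu'}=\uphi\underline{\mu'}=\uphiApB\mu'$ shows that the image of $\CM(I^{\nat},\bbB)$ corresponds under $\uphi$ to $\Fix(\uphiApB\dphiApB)\subseteq\Fix(\uphi\dphi)$, so surjectivity is exactly $\Fix(\uphi\dphi)=\Fix(\uphiApB\dphiApB)$; since $\uphi\dphi$ and $\uphiApB\dphiApB$ are idempotent comonads on the poset $\PdB$, each determined by its fixed-point set as the largest fixed point below a given argument, this equality of fixed-point sets is equivalent to the equality of the two comonads, namely (iii). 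The main point to stay on top of is the bookkeeping---which object sits in $\PA$, $\PA'$, or $\PdB$, and which triangle identity ($\uphi\dphi\uphi=\uphi$ or $\dphi\uphi\dphi=\dphi$, and similarly for the primed versions) is in play---but once the dictionary is fixed, every step becomes a one-line calculation.
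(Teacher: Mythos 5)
Your proof is correct: the dictionary you extract from Proposition \ref{subdistributor_inclusion} (namely $\uphiApB\mu'=\uphi\underline{\mu'}$ and $(\dphi\lam)_{\bbA'}=\dphiApB\lam$) and the triangle identities are exactly the ingredients the paper uses, and every step you outline goes through. The organization, however, differs from the paper's. The paper proves the cycle (i)$\Rightarrow$(ii)$\Rightarrow$(iii)$\Rightarrow$(iv)$\Rightarrow$(i) by direct element-wise computations with distributors: for (iii)$\Rightarrow$(iv) it verifies $\mu=\dphi\uphi\underline{\mu_{\bbA'}}$ for each $\mu\in\CM\phi$, and for (iv)$\Rightarrow$(i) it extracts a witness $\mu'$ from surjectivity and applies $\uphi$ to $\dphi\uphi\mu=\dphi\uphi\underline{\mu'}$. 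You instead prove three genuine biconditionals, and the key repackaging is your observation that (ii) says precisely $\uphi\mu\in\Fix(\uphiApB\dphiApB)$ for all $\mu$, together with the transport of (iv) along the isomorphism $\uphi:\CM\phi\to\Fix(\uphi\dphi)$ and the standard fact that an idempotent $\CQ$-comonad on a skeletal $\CQ$-category is determined by its fixed-point set. What this buys is conceptual clarity --- all four conditions become statements about the two coclosure operators $\uphi\dphi$ and $\uphiApB\dphiApB$ on $\PdB$ and their fixed points --- at the cost of invoking the comonad-determined-by-fixed-points principle, which the paper avoids by keeping everything as explicit calculations (indeed the paper itself uses your style of argument only later, in the proof of (iv)$\Rightarrow$(iii) of Proposition \ref{codomain_reducible_condition}). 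One bookkeeping point worth making explicit when you write this up: the identity $\CM(I^{\nat},\bbB)\mu'=\dphi\uphi\underline{\mu'}$ that your (iii)$\Leftrightarrow$(iv) computation starts from is established in the proof of Lemma \ref{A_JB_Chu_connection_M}(2), so it should be cited rather than treated as a definition.
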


\begin{proof}
(i)${}\Lra{}$(ii): For each $\mu\in\PA$, there exists $\mu'\in\PA'$ with
$$\uphi\mu=\phi\lda\mu=\phi_{\bbA',\bbB}\lda\mu'$$
since $\bbA\setminus\bbA'$ is $\phi$-reducible. Therefore
\begin{align*}
\phi\lda\mu&=\phi_{\bbA',\bbB}\lda\mu'\\
&=\phi_{\bbA',\bbB}\lda((\phi_{\bbA',\bbB}\lda\mu')\rda\phi_{\bbA',\bbB})\\
&=\phi_{\bbA',\bbB}\lda((\uphi\mu)\rda\phi_{\bbA',\bbB})\\
&=\phi_{\bbA',\bbB}\lda((\uphi\mu)\rda\phi)_{\bbA'}&(\text{Proposition \ref{subdistributor_inclusion}(2)})\\
&=\phi_{\bbA',\bbB}\lda(\dphi\uphi\mu)_{\bbA'}.
\end{align*}

(ii)${}\Lra{}$(iii): For all $\lam\in\PdB$,
$$\uphi\dphi\lam=\phi\lda\dphi\lam=\phi_{\bbA',\bbB}\lda(\dphi\uphi\dphi\lam)_{\bbA'}=\phi_{\bbA',\bbB}\lda(\lam\rda\phi)_{\bbA'}=\uphiApB\dphiApB\lam,$$
where the last equality follows from Proposition \ref{subdistributor_inclusion}(2).

(iii)${}\Lra{}$(iv): With Lemma \ref{A_JB_Chu_connection_M}(2) at hand, it suffices to show $\mu=\dphi\uphi\underline{\mu_{\bbA'}}$ for all $\mu\in\CM\phi$. Indeed,
\begin{align*}
\mu&=\dphi\uphi\dphi\uphi\mu\\
&=\dphi\uphiApB\dphiApB\uphi\mu&(\text{iii})\\
&=\dphi\uphiApB((\dphi\uphi\mu)_{\bbA'})&(\text{Proposition \ref{subdistributor_inclusion}(2)})\\
&=\dphi\uphiApB\mu_{\bbA'}&(\mu\in\CM\phi)\\
&=\dphi\uphi\underline{\mu_{\bbA'}}.&(\text{Proposition \ref{subdistributor_inclusion}(3)})
\end{align*}

(iv)${}\Lra{}$(i): For all $\mu\in\PA$, since $\CM(I^{\nat},\bbB)$ is surjective and $\dphi\uphi\mu\in\CM\phi$, there exists $\mu'\in\CM\phi_{\bbA,\bbB'}\subseteq\PA'$ with
$\dphi\uphi\mu=\dphi\uphi\underline{\mu'}$. By Proposition \ref{subdistributor_inclusion}(3) one soon has
$$\phi\lda\mu=\phi\lda\underline{\mu'}=\phi_{\bbA',\bbB}\lda\mu',$$
and consequently $\bbA\setminus\bbA'$ is $\phi$-reducible.
\end{proof}

\begin{prop} \label{codomain_reducible_condition}
Let $\phi:\bbA\oto\bbB$ be a $\CQ$-distributor and $\bbB'\subseteq\bbB$. The following statements are equivalent:
\begin{itemize}
\item[\rm (i)] $\bbB\setminus\bbB'$ is $\phi$-reducible.
\item[\rm (ii)] $\lam\rda\phi=(\uphi\dphi\lam)_{\bbB'}\rda\phi_{\bbA,\bbB'}$ for all $\lam\in\PdB$.
\item[\rm (iii)] $\dphi\uphi=\dphiABp\uphiABp:\PA\to\PA$.
\item[\rm (iv)] $\CM(\bbA, J_{\nat}):\CM\phi\to\CM\phi_{\bbA,\bbB'}$ is the identity $\CQ$-functor on $\CM\phi=\CM\phi_{\bbA,\bbB'}$.
\end{itemize}
\end{prop}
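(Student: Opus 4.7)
This proposition is the codomain-side analogue of Proposition \ref{domain_reducible_condition}, so my plan is to mirror that proof with presheaves on $\bbA$ replaced by copresheaves on $\bbB$ and left-extensions swapped with right-liftings. The two workhorse identities I will invoke throughout come from Proposition \ref{subdistributor_inclusion}: the formula $\phi_{\bbA,\bbB'}\lda\mu=(\phi\lda\mu)_{\bbB'}$ (so $\uphiABp\mu=(\uphi\mu)_{\bbB'}$ for $\mu\in\PA$), and $\lam'\rda\phi_{\bbA,\bbB'}=\underline{\lam'}\rda\phi=\dphi\underline{\lam'}$ for $\lam'\in\PdB'$. Combined with the idempotency of $\dphiABp\uphiABp$ as a $\CQ$-monad on $\PA$ (so $\dphiABp\uphiABp\dphiABp=\dphiABp$ and $\uphiABp\dphiABp\uphiABp=\uphiABp$), these let me rewrite everything in terms of the restriction map $(-)_{\bbB'}:\PdB\to\PdB'$ and the unrestricted operators $\uphi,\dphi$.

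For (i)$\Rightarrow$(ii): given $\lam\in\PdB$, reducibility produces $\lam'\in\PdB'$ with $\lam\rda\phi=\lam'\rda\phi_{\bbA,\bbB'}=\dphiABp\lam'$; applying $\dphiABp=\dphiABp\uphiABp\dphiABp$ and the identity $\uphiABp(\dphi\lam)=(\uphi\dphi\lam)_{\bbB'}$ then yields $\lam\rda\phi=(\uphi\dphi\lam)_{\bbB'}\rda\phi_{\bbA,\bbB'}$. For (ii)$\Rightarrow$(iii): specialize (ii) to $\lam=\uphi\mu$, use $\uphi\dphi\uphi\mu=\uphi\mu$, and recognize the right-hand side as $\dphiABp\uphiABp\mu$ via the restriction identity. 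For (iii)$\Rightarrow$(iv): the equation $\dphi\uphi=\dphiABp\uphiABp$ forces $\CM\phi=\Fix(\dphi\uphi)=\Fix(\dphiABp\uphiABp)=\CM\phi_{\bbA,\bbB'}$ as $\CQ$-subcategories of $\PA$; the inclusion $\CM\phi_{\bbA,\bbB'}\hookrightarrow\CM\phi$ from Lemma \ref{A_JB_Chu_connection_M}(1) is then the identity, and since it is right adjoint to $\CM(\bbA,J_{\nat})$, the left adjoint must also be the identity by uniqueness of adjoints. For (iv)$\Rightarrow$(i): given $\lam\in\PdB$, note $\dphi\lam\in\CM\phi=\CM\phi_{\bbA,\bbB'}$ is a fixed point of $\dphiABp\uphiABp$, and unwinding this produces $\lam':=(\uphi\dphi\lam)_{\bbB'}\in\PdB'$ satisfying $\lam'\rda\phi_{\bbA,\bbB'}=\lam\rda\phi$.

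The main bookkeeping obstacle is keeping straight which of $\uphi,\dphi,\uphiABp,\dphiABp$ acts on which of $\PA,\PdB,\PdB'$, and in particular that the relevant closure remains $\dphiABp\uphiABp$ on $\PA$ rather than its comonadic counterpart on $\PdB'$. An alternative route would invoke the 2-isomorphism $(-)^{\op}:\QDist\cong(\CQ^{\op}\text{-}\Dist)^{\op}$ from Remark \ref{Qcat_dual} together with Proposition \ref{M_preserve_dual} to derive this proposition from Proposition \ref{domain_reducible_condition} applied to $\phi^{\op}:\bbB^{\op}\oto\bbA^{\op}$ with the subcategory $(\bbB')^{\op}\subseteq\bbB^{\op}$; I expect that the direct proof is more transparent than chasing the identification $\CM\phi^{\op}\cong(\CM\phi)^{\op}$ through each clause, though either approach should succeed.
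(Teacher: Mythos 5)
Your proof is correct. In outline it matches the paper's strategy---the content is the dualization of Proposition \ref{domain_reducible_condition}---but the logical architecture differs in two small ways worth noting. First, the paper disposes of (i)$\iff$(ii)$\iff$(iii) by simply citing duality with the domain-side proposition, whereas you write out the dualized computations explicitly; your versions are the correct duals (e.g.\ specializing (ii) at $\lam=\uphi\mu$ and using $\uphi\dphi\uphi=\uphi$ to land on $\dphiABp\uphiABp\mu$). Second, you close a one-directional cycle (i)$\Rightarrow$(ii)$\Rightarrow$(iii)$\Rightarrow$(iv)$\Rightarrow$(i), whereas the paper proves (iii)$\iff$(iv) in both directions; its (iv)$\Rightarrow$(iii) needs Proposition \ref{monad_reflective}(1) to see that $\dphi\uphi$ and $\dphiABp\uphiABp$ are both left adjoint to the same inclusion once their fixed-point categories coincide. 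Your replacement step (iv)$\Rightarrow$(i) is more elementary: it only unwinds the fixed-point condition $\dphiABp\uphiABp\dphi\lam=\dphi\lam$ via $\uphiABp\dphi\lam=(\uphi\dphi\lam)_{\bbB'}$ to exhibit the witness $\lam'=(\uphi\dphi\lam)_{\bbB'}\in\PdB'$. All the identities you invoke are valid in the locally ordered setting ($\dphiABp\uphiABp\dphiABp=\dphiABp$ follows from the adjunction inequalities), and your uniqueness-of-adjoints argument for (iii)$\Rightarrow$(iv) is sound because $\CM\phi$ is skeletal; the paper instead verifies that step by the one-line computation $\CM(\bbA,J_{\nat})\mu=\dphi\uphi(\mu\circ\bbA)=\mu$.
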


\begin{proof}
(i)$\iff$(ii)$\iff$(iii) is the dual of the equivalences of (i), (ii), (iii) in Proposition \ref{domain_reducible_condition}.

(iii)${}\Lra{}$(iv): $\CM\phi=\CM\phi_{\bbA,\bbB'}$ is obvious and
$$\CM(\bbA, J_{\nat})\mu=\dphiABp\uphiABp\bbA^*\mu=\dphi\uphi(\mu\circ\bbA)=\dphi\uphi\mu=\mu$$
for all $\mu\in\CM\phi=\CM\phi_{\bbA,\bbB'}$.

(iv)${}\Lra{}$(iii): $\CM\phi$ and $\CM\phi_{\bbA,\bbB'}$ are, by definition, respectively the fixed points of the   $\CQ$-monads $\dphi\uphi:\PA\to\PA$ and $\dphiABp\uphiABp:\PA\to\PA$ on skeletal $\CQ$-categories; so, if $\CM\phi=\CM\phi_{\bbA,\bbB'}$, it follows from Proposition \ref{monad_reflective}(1) that, when restricting the codomain to the image, both $\dphi\uphi$ and $\dphiABp\uphiABp$ are left adjoint to the same inclusion $\CQ$-functor, thus they must be equal.
\end{proof}

Now we are ready to complete the proof of Theorem \ref{reduct reducible}:

\begin{proof}[Proof of Theorem \ref{reduct reducible}]
If $\bbA\setminus\bbA'$ and $\bbB\setminus\bbB'$ are both $\phi$-reducible, it is easy to check that $\bbB\setminus\bbB'$ is $\phi_{\bbA',\bbB}$-reducible by help of Proposition \ref{subdistributor_inclusion}(2). Then it follows from Propositions \ref{domain_reducible_condition} and \ref{codomain_reducible_condition} that both $\CM(I^{\nat},\bbB):\CM\phi_{\bbA',\bbB}\to\CM\phi$ and $\CM(\bbA', J_{\nat}):\CM\phi_{\bbA',\bbB}\to\CM\phi_{\bbA',\bbB'}$ are isomorphisms. Thus   $(-)_{\bbA'}:\CM\phi\to\CM\phi_{\bbA',\bbB}$, the inverse of $\CM(I^{\nat},\bbB)$, is an isomorphism by Lemma \ref{A_JB_Chu_connection_M}. Therefore
$$R_1=(\CM\phi\to^{(-)_{\bbA'}}\CM\phi_{\bbA',\bbB}\to^{\CM(\bbA', J_{\nat})}\CM\phi_{\bbA',\bbB'})$$
is an isomorphism, showing that $\phi_{\bbA',\bbB'}$ is a reduct of $\phi$.

Conversely, suppose that $\phi_{\bbA',\bbB'}$ is a reduct of $\phi$.  By definition, both $E_1$ and $E_2$ are isomorphisms, thus $\CM(I^{\nat},\bbB):\CM\phi_{\bbA',\bbB}\to\CM\phi$ and the inclusion $\CQ$-functor $\CM\phi_{\bbA,\bbB'}\ \to/^(->/\CM\phi$ are both surjective; in particular, the inclusion $\CQ$-functor $\CM\phi_{\bbA,\bbB'}\ \to/^(->/\CM\phi$ is the identity $\CQ$-functor on $\CM\phi_{\bbA,\bbB'}=\CM\phi$, and so is its left adjoint $\CM(\bbA, J_{\nat})$. Therefore, $\bbA\setminus\bbA'$ is $\phi$-reducible by Proposition \ref{domain_reducible_condition}, and $\bbB\setminus\bbB'$ is $\phi$-reducible by Proposition \ref{codomain_reducible_condition}.
\end{proof}

\begin{exmp}
Given $A,B\subseteq\bQ$, $(A,B,\leq)$ is a reduct of $(\bQ,\bQ,\leq)$ if and only if both $A$ and $B$ are dense in $\bQ$. To see this, note that $\bQ\setminus A$ being $\leq$-reducible exactly means for all $x\in\bQ\setminus A$, by Remark \ref{FCA_reducible}, there exists a subset $A'\subseteq A$ with $\ua x=\ua A'$, where $\ua A'$ denotes the set of upper bonds of $A'$ in $\bQ$; or equivalently, for all $x\in\bQ\setminus A$, $x=\displaystyle\bv A'$ for some $A'\subseteq A$; that is, $A$ is dense in $\bQ$. Since the characterization of the $\leq$-reducibility of $\bQ\setminus B$ may be obtained dually, the conclusion then follows from Theorem \ref{reduct reducible}.
\end{exmp}

\begin{exmp}
Let $X$ be a topological space and $\CF$ the set of closed sets of $X$. Define a relation $R\subseteq X\times\CF$ as
$$(x,F)\in R\iff x\in F,$$
then for any subset $\CF'\subseteq\CF$, $(X,\CF',R_{X,\CF'})$ is a reduct of $(X,\CF,R)$ if and only if $\CF'$ is a base for the closed sets of $X$. For this one notices
\begin{equation} \label{dRA_intersection}
\dR(\CA)=\{x\in X\mid\forall F\in\CA:\ x\in F\}=\bigcap\CA
\end{equation}
for all $\CA\subseteq\CF$, and consequently
\begin{align*}
\CF\setminus\CF'\ \text{is}\ R\text{-reducible}&\iff\forall F\in\CF\setminus\CF':\ \dR(\{F\})=\dR(\CA')\ \text{for some}\ \CA'\subseteq\CF'&(\text{Remark \ref{FCA_reducible}})\\
&\iff\forall F\in\CF\setminus\CF':\ F=\bigcap\CA'\ \text{for some}\ \CA'\subseteq\CF'&(\text{Equation (\ref{dRA_intersection})})\\
&\iff\CF'\ \text{is a base for the closed sets of}\ X.
\end{align*}
\end{exmp}

Another fact emerged from Propositions \ref{domain_reducible_condition} and \ref{codomain_reducible_condition} is that $R_1:\CM\phi\to\CM\phi_{\bbA',\bbB'}$ is a left adjoint $\CQ$-functor if $\bbA\setminus\bbA'$ is $\phi$-reducible, and so is $E_2:\CM\phi_{\bbA',\bbB'}\to\CM\phi$ if $\bbB\setminus\bbB'$ is $\phi$-reducible. The fullness of the functor $\CM:\ChuCon(\QDist)^{\op}\to\QCCat$ (Proposition \ref{M_functor}) then implies that they must be induced by some Chu connections. We spell this out in the following:

\begin{prop}
Let $\phi:\bbA\oto\bbB$ be a $\CQ$-distributor and $\bbA'\subseteq\bbA$, $\bbB'\subseteq\bbB$.
\begin{itemize}
\item[\rm (1)] If $\bbA\setminus\bbA'$ is $\phi$-reducible, then $(\phi\rda\phi_{\bbA',\bbB},\ J_{\nat}):\phi_{\bbA',\bbB'}\to\phi$ is a Chu connection, and
$$\CM(\phi\rda\phi_{\bbA',\bbB},\ J_{\nat})=R_1:\CM\phi\to\CM\phi_{\bbA',\bbB'}.$$
\item[\rm (2)] If $\bbB\setminus\bbB'$ is $\phi$-reducible, then $(I^{\nat},\ \phi_{\bbA,\bbB'}\lda\phi):\phi\to\phi_{\bbA',\bbB'}$ is a Chu connection, and
$$\CM(I^{\nat},\ \phi_{\bbA,\bbB'}\lda\phi)=E_2:\CM\phi_{\bbA',\bbB'}\to\CM\phi.$$
\end{itemize}
$$\bfig
\square<800,500>[\bbA'`\bbA`\bbB'`\bbB;\phi\rda\phi_{\bbA',\bbB}`\phi_{\bbA',\bbB'}`\phi`J_{\nat}]
\morphism(800,500)|b|<-800,-500>[\bbA`\bbB';]
\place(0,250)[\circ] \place(800,250)[\circ] \place(400,500)[\circ] \place(400,0)[\circ]\place(400,250)[\circ]
\square(1500,0)<800,500>[\bbA`\bbA'`\bbB`\bbB';I^{\nat}`\phi`\phi_{\bbA',\bbB'}`\phi_{\bbA,\bbB'}\lda\phi]
\morphism(2300,500)|b|<-800,-500>[\bbA'`\bbB;]
\place(1500,250)[\circ] \place(2300,250)[\circ] \place(1900,500)[\circ] \place(1900,0)[\circ]\place(1900,250)[\circ]
\efig$$
\end{prop}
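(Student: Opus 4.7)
My plan is to verify each of the two parts directly: each amounts to checking the Chu connection condition and then identifying the image under $\CM$ with the stated functor. The two parts are formally dual via Remark \ref{Qcat_dual}, and proceed by the same template, so I will spell out part (1) and indicate the adjustments for part (2). The main obstacle in both cases is the Chu condition: a pointwise identity between $\CQ$-arrows that is essentially a Yoneda-style consequence of the $\phi$-reducibility hypothesis.

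For part (1)(a), the Chu condition $\phi_{\bbA',\bbB'}\lda(\phi\rda\phi_{\bbA',\bbB})=J_{\nat}\rda\phi=\phi_{\bbA,\bbB'}$ unwinds to the pointwise equality
$$\phi(x,y')=\bw_{x'\in\bbA'_0}\phi(x',y')\lda(\phi\rda\phi)(x',x)\qquad(x\in\bbA_0,\ y'\in\bbB'_0).$$
The inequality $\geq$ is automatic from $\phi\circ(\phi\rda\phi)\leq\phi$. For $\leq$, I will apply Proposition \ref{domain_reducible_condition}(ii) to the representable $\mu=\sY_{\bbA}x$: by Proposition \ref{tphi_hphi_Yoneda} one has $\uphi\sY_{\bbA}x=\phi(x,-)$, and by Proposition \ref{subdistributor_inclusion}(2) one has $(\dphi\uphi\sY_{\bbA}x)_{\bbA'}(x')=(\phi\rda\phi)(x',x)$; evaluating the resulting equality $\phi\lda\sY_{\bbA}x=\phi_{\bbA',\bbB}\lda(\dphi\uphi\sY_{\bbA}x)_{\bbA'}$ at $y'$ yields exactly the identity above.

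For part (1)(b), I will show that $\mu\circ(\phi\rda\phi_{\bbA',\bbB})=\mu_{\bbA'}$ for every $\mu\in\CM\phi$, which immediately gives
$$\CM(\phi\rda\phi_{\bbA',\bbB},J_{\nat})\mu=\dphiApBp\uphiApBp(\mu\circ(\phi\rda\phi_{\bbA',\bbB}))=\dphiApBp\uphiApBp\mu_{\bbA'}=R_1\mu.$$
The inequality $\mu_{\bbA'}\leq\mu\circ(\phi\rda\phi_{\bbA',\bbB})$ follows from $\bbA\leq\phi\rda\phi$ (Proposition \ref{Q_rel_dist}(iii)). For the reverse, I expand $\mu=\dphi\uphi\mu=(\uphi\mu)\rda\phi$ and apply the elementary inequality $(a\rda b)\circ(b\rda c)\leq a\rda c$ pointwise in $y$, together with monotonicity of composition with respect to meets, to deduce $\mu(x)\circ(\phi\rda\phi)(x',x)\leq\mu(x')$ for every $x$.

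Part (2) is verified in the same style. The Chu condition $\phi_{\bbA',\bbB}=(\phi_{\bbA,\bbB'}\lda\phi)\rda\phi_{\bbA',\bbB'}$ unwinds to
$$\phi(x',y)=\bw_{y'\in\bbB'_0}(\phi\lda\phi)(y,y')\rda\phi(x',y')\qquad(x'\in\bbA'_0,\ y\in\bbB_0),$$
which I will obtain by applying Proposition \ref{codomain_reducible_condition}(ii) to $\lam=\sYd_{\bbB}y$ and using $\dphi\sYd_{\bbB}y=\phi(-,y)$ together with $(\uphi\dphi\sYd_{\bbB}y)_{\bbB'}(y')=(\phi\lda\phi)(y,y')$. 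For the identification with $E_2$, the key observation is that $\underline{\mu'}=\mu'\circ I^{\nat}$, so $\CM(I^{\nat},\phi_{\bbA,\bbB'}\lda\phi)\mu'=\dphi\uphi\underline{\mu'}$; Proposition \ref{codomain_reducible_condition}(iii) gives $\dphi\uphi=\dphiABp\uphiABp$, hence this equals $\dphiABp\uphiABp\underline{\mu'}=E_2\mu'$, as required.
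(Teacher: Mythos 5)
Your proposal is correct. For the Chu condition in each part, your route is essentially the paper's in pointwise clothing: the paper computes globally, writing $\phi_{\bbA,\bbB'}=J^{\nat}\circ\big(\phi\lda(\phi\rda\phi)\big)$ and invoking Proposition \ref{domain_reducible_condition}(iii) together with Proposition \ref{adjoint_arrow_calculation}, whereas you evaluate Proposition \ref{domain_reducible_condition}(ii) (resp.\ \ref{codomain_reducible_condition}(ii)) at the representables $\sY_{\bbA}x$ (resp.\ $\sYd_{\bbB}y$); since $(\dphi\uphi\sY_{\bbA}x)_{\bbA'}=(\phi\rda\phi)(-,x)_{\bbA'}$, these are the same identity stated componentwise versus as distributors, and for part (2) the paper simply appeals to duality where you redo the computation. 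The genuine divergence is in identifying $\CM(\phi\rda\phi_{\bbA',\bbB},J_{\nat})$ with $R_1$: the paper starts from $R_1\mu=(\phi_{\bbA',\bbB'}\lda\mu_{\bbA'})\rda\phi_{\bbA',\bbB'}$ and massages it into $\CM(\phi\rda\phi_{\bbA',\bbB},J_{\nat})\mu$ using condition (ii) and the Chu identity just established, while you prove the sharper pointwise fact $(\phi\rda\phi_{\bbA',\bbB})^*\mu=\mu_{\bbA'}$ for every $\mu\in\CM\phi$ — which, as your argument shows, requires no reducibility hypothesis at all, only $\mu=\dphi\uphi\mu$ and $\bbA\leq\phi\rda\phi$ — and then the identification is immediate. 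That is a cleaner and slightly stronger observation than the paper's; what it costs is an explicit elementwise meet/composition estimate in place of the paper's purely formal calculus. Your treatment of $E_2$ via Proposition \ref{codomain_reducible_condition}(iii) coincides with the paper's.
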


\begin{proof}
(1) First, one always has
$$\phi_{\bbA,\bbB'}=J^{\nat}\circ\phi=J_{\nat}\rda\phi,$$
where the two equalities respectively follow from Proposition \ref{subdistributor_inclusion}(1) and Proposition \ref{adjoint_arrow_calculation}(1).

Second, in the case that $\bbA\setminus\bbA'$ is $\phi$-reducible,
\begin{align*}
\phi_{\bbA,\bbB'}&=J^{\nat}\circ(\phi\lda(\phi\rda\phi))
\\
&=J^{\nat}\circ(\phi_{\bbA',\bbB}\lda(\phi\rda\phi_{\bbA',\bbB}))&(\text{Proposition \ref{domain_reducible_condition}(iii)})\\
&=(J^{\nat}\circ\phi_{\bbA',\bbB})\lda(\phi\rda\phi_{\bbA',\bbB})&(\text{Proposition \ref{adjoint_arrow_calculation}(3)})\\
&=\phi_{\bbA',\bbB'}\lda(\phi\rda\phi_{\bbA',\bbB}).&(\text{Proposition \ref{subdistributor_inclusion}(1)})
\end{align*}
Thus $(\phi\rda\phi_{\bbA',\bbB},\ J_{\nat}):\phi_{\bbA',\bbB'}\to\phi$ is a Chu connection.

Finally, for its image under $\CM$, note that for all $\mu\in\CM\phi$,
\begin{align*}
R_1\mu&=(\phi_{\bbA',\bbB'}\lda\mu_{\bbA'})\rda\phi_{\bbA',\bbB'}\\
&=(\phi_{\bbA',\bbB'}\lda(\dphi\uphi\mu)_{\bbA'})\rda\phi_{\bbA',\bbB'}&(\mu\in\CM\phi)\\
&=(\phi_{\bbA,\bbB'}\lda\mu)\rda\phi_{\bbA',\bbB'}&(\text{Proposition \ref{domain_reducible_condition}(ii)})\\
&=((\phi_{\bbA',\bbB'}\lda(\phi\rda\phi_{\bbA',\bbB}))\lda\mu)\rda\phi_{\bbA',\bbB'}&(\text{the second step})\\
&=(\phi_{\bbA',\bbB'}\lda(\mu\circ(\phi\rda\phi_{\bbA',\bbB})))\rda\phi_{\bbA',\bbB'}\\
&=\dphiApBp\uphiApBp(\phi\rda\phi_{\bbA',\bbB})^*\mu\\
&=\CM(\phi\rda\phi_{\bbA',\bbB},\ J_{\nat})\mu,
\end{align*}
where the third equality holds because $\bbA\setminus\bbA'$ is $\phi_{\bbA,\bbB'}$-reducible by Proposition \ref{subdistributor_inclusion}(2).

(2) By applying (1) to the $\CQ^{\op}$-distributor $\phi^{\op}:\bbB^{\op}\oto\bbA^{\op}$ one obtains that
$$(\phi^{\op}\rda(\phi^{\op})_{\bbB'^{\op},\bbA^{\op}},\ (I^{\op})_{\nat}):(\phi^{\op})_{\bbB'^{\op},\bbA'^{\op}}\to\phi^{\op}$$
is a Chu connection if $\bbB^{\op}\setminus\bbB'^{\op}$ is $\phi^{\op}$-reducible. By duality (see the isomorphism (\ref{ChuCon_QDist_op})) this exactly means $(I^{\nat},\ \phi_{\bbA,\bbB'}\lda\phi):\phi\to\phi_{\bbA',\bbB'}$ is a Chu connection if $\bbB\setminus\bbB'$ is $\phi$-reducible, since it is easy to see $(I^{\nat})^{\op}=(I^{\op})_{\nat}$. For its image under $\CM$, note that for all $\mu'\in\CM\phi_{\bbA',\bbB'}$,
\begin{align*}
E_2\mu'&=\dphiABp\uphiABp\underline{\mu'}\\
&=\dphi\uphi\underline{\mu'}&(\text{Proposition \ref{codomain_reducible_condition}(iii)})\\
&=\dphi\uphi(\mu'\circ I^{\nat})&(\text{Formulas (\ref{underline_overline_phi_def})})\\
&=\dphi\uphi(I^{\nat})^*\mu'\\
&=\CM(I^{\nat},\ \phi_{\bbA,\bbB'}\lda\phi)\mu',
\end{align*}
completing the proof.
\end{proof}

Since $\ChuCon(\QDist)$ is a quantaloid,   four Chu connections between $\phi:\bbA\oto\bbB$ and $\phi_{\bbA',\bbB'}:\bbA'\oto\bbB'$ can be constructed from the Chu connections $(\bbA, J_{\nat}): \phi_{\bbA,\bbB'}\to\phi$ and $(I^{\nat},\bbB):\phi\to\phi_{\bbA',\bbB}$:
$$\bfig
\Atriangle|lra|/->`->`@{->}@<3pt>/<700,500>[\phi_{\bbA,\bbB'}`\phi_{\bbA',\bbB'}`\phi;
(I^{\nat},\bbB')`(\bbA, J_{\nat})`(\bbA, J_{\nat})\lda(I^{\nat},\bbB')]
\morphism(1400,0)|b|/@{->}@<3pt>/<-1400,0>[\phi`\phi_{\bbA',\bbB'};(I^{\nat},\bbB')\lda(\bbA, J_{\nat})]
\Vtriangle(2000,0)|blr|/@{->}@<-3pt>`->`->/<700,500>[\phi_{\bbA',\bbB'}`\phi`\phi_{\bbA',\bbB};
(I^{\nat},\bbB)\rda(\bbA', J_{\nat})`(\bbA', J_{\nat})`(I^{\nat},\bbB)]
\morphism(3400,500)|a|/@{->}@<-3pt>/<-1400,0>[\phi`\phi_{\bbA',\bbB'};(\bbA', J_{\nat})\rda(I^{\nat},\bbB)]
\efig$$
It is natural to ask whether reducts of $\CQ$-distributors are related to these Chu connections. However, due to the difficulty of calculating implications in $\ChuCon(\QDist)$, we failed to describe their images under $\CM:\ChuCon(\QDist)^{\op}\to\QCCat$. So, we end this paper with

\begin{ques}
Is it possible to characterize the reducts of a $\CQ$-distributor through the above implications of Chu connections?
\end{ques}

\section*{Acknowledgements}

The first author acknowledges the support of Natural Sciences and Engineering Research Council of Canada (Discovery Grant 501260 held by Professor Walter Tholen). The second and the third authors acknowledge the support of National Natural Science Foundation of China (11371265). We thank the anonymous referee for several helpful remarks.





\end{document}